\newcommand{\R}{{\Bbb R}}
\begin{document}

\title{Speed selection and stability of wavefronts for delayed monostable reaction-diffusion equations%\thanks{Grants or other notes
%about the article that should go on the front page should be
%placed here. General acknowledgments should be placed at the end of the article.}
}
%\subtitle{Do you have a subtitle?\\ If so, write it here}

\titlerunning{Speed selection and stability of wavefronts for delayed monostable equations}        % if too long for running head

\author{Abraham Solar         \and
        Sergei Trofimchuk %etc.
}

%\authorrunning{Short form of author list} % if too long for running head

\institute{A. Solar  \at
              Instituto de Matem\'atica y F\'isica, Universidad de Talca, Casilla 747,
Talca, Chile \\
              \email{asolar.solar@gmail.com}           %  \\
%             \emph{Present address:} of F. Author  %  if needed
           \and
           S. Trofimchuk (corresponding author) \at Instituto de Matem\'atica y F\'isica, Universidad de Talca, Casilla 747,
Talca, Chile \\
              \email{ trofimch@inst-mat.utalca.cl}  }

%\date{Received: date / Accepted: date}
% The correct dates will be entered by the editor

\maketitle

\begin{abstract} We study the asymptotic stability of traveling fronts and front's velocity selection problem for the time-delayed
monostable  equation $(*)$ $u_{t}(t,x) = u_{xx}(t,x) - u(t,x) + g(u(t-h,x)),$ $x \in \R,\ t >0,$ considered with Lipschitz continuous reaction term $g: \R_+ \to \R_+$. We are also  assuming that  
 $g$ is $C^{1,\alpha}$-smooth in some neighbourhood of the equilibria $0$ and $\kappa >0$ to  $(*)$. 
In difference with the previous works, we do not impose any convexity or subtangency condition on the graph of $g$ so that equation $(*)$ can possess  pushed traveling fronts.   Our first main result says that  the non-critical wavefronts of $(*)$ with monotone $g$ are globally  
nonlinearly  stable.  In the special and easier  case when  the Lipschitz constant  for $g$ coincides with  $g'(0)$, we present a series of results concerning the exponential [asymptotic] stability   of non-critical [respectively, critical] fronts   for  monostable model $(*)$. As an application, we present a criterion of the absolute global stability of non-critical wavefronts to the diffusive Nicholson's blowflies equation.  
\keywords{Monostable equation \and Reaction-diffusion equation \and Delay \and Super- and sub-solutions \and Wavefront \and Asymptotic stability \and Speed selection}
% \PACS{PACS code1 \and PACS code2 \and more}
 \subclass{MSC 34K12 \and  35K57 \and 
92D25 }
\end{abstract}

%% Title, authors and addresses

%% use the tnoteref command within \title for footnotes;
%% use the tnotetext command for the associated footnote;
%% use the fnref command within \author or \address for footnotes;
%% use the fntext command for the associated footnote;
%% use the corref command within \author for corresponding author footnotes;
%% use the cortext command for the associated footnote;
%% use the ead command for the email address,
%% and the form \ead[url] for the home page:
%%
%% \title{Title\tnoteref{label1}}
%% \tnotetext[label1]{}
%% \author{Name\corref{cor1}\fnref{label2}}
%% \ead{email address}
%% \ead[url]{home page}
%% \fntext[label2]{}
%% \cortext[cor1]{}
%% \address{Address\fnref{label3}}
%% \fntext[label3]{}

\newpage

\section{Introduction and main results} \label{intro}

%\paragraph{Paragraph headings} Use paragraph headings as needed.

Set $\Pi_0:=[-h,0]\times \R \subset \R^2$ and consider the family $\mathcal{F}$ of continuous and uniformly bounded functions $w_0(s,x)$, $w_0:\Pi_0\to \R_+$, exponentially decaying (uniformly in $s$) as $x\to -\infty$ and separated from $0$  (uniformly in $s$) as $x\to +\infty$. In particular, we assume that each $w_0\in \mathcal{F}$ satisfies
\begin{description}
 \item [$(IC1)$] $\quad 0\leq w_0(s,x)\leq |w_0|_\infty:= \sup_{(s,x)\in \Pi_0}w_0(s,x) < \infty, \quad  (s,x) \in \Pi_0;$ 
 
 \vspace{2mm}
 
 \item[$(IC2)$] \quad
$\liminf_{x \to +\infty} \min_{s\in[-h,0]} w_0(s,x)> 0$.\end{description}

Everywhere in the sequel, we  will also assume that each element $w_0(s,x)$  of $\mathcal{F}$ is locally H\"older continuous in $x \in \R$,
uniformly with respect to $s\in [-h,0]$. 

Our goal in this work is to indicate subclasses of   initial functions  $w_0\in  \mathcal{F}$ 
for monostable reaction-diffusion equations with monotone 
delayed reaction
\begin{eqnarray} \label{e1}
u_{t}(t,x) &=& u_{xx}(t,x) - u(t,x) + g(u(t-h,x)), \  t>0,\ x\in\R, 
 \\
u(s,x) & =& w_0(s,x), \ s \in [-h,0], \ x \in \R, \label{e2}
\end{eqnarray}
which yield solutions $u= u(t,x,w_0)$  converging, as $t \to
+\infty$, to appropriate  traveling fronts  $u = \phi(x+ct, w_0)$, $c>0$, of (\ref{e1}), (\ref{e2}).   By definition, 
the front profile $\phi:\R \to \R_+$ is a positive bounded smooth function such that the limits $\phi(-\infty)=0$, $\phi(+\infty)=\kappa$ exist. 
Here we are assuming that the continuous nonlinearity $g:
\mathbb R_+ \to \mathbb R_+$ satisfies the  monostability condition
\vspace{1mm}

\noindent  {\rm \bf(H)}  %$g$ is strictly increasing and 
the equation $g(x)= x$ has exactly two nonnegative solutions: $0$ and
$\kappa >0$. Moreover, $g$  is $C^1$-smooth in some
$\delta_0$-neighborhood of the equilibria  where $g'(0) >1,$
$g'(\kappa) < 1,$ and it also satisfies the Lipshitz condition
$|g(u)-g(v)| \leq L_g|u-v|,$ $u,v \geq 0$. In addition,
there are $C >0,\ \theta \in (0,1],$ such that   $
\left|g'(u)- g'(0)\right| \leq Cu^\theta $ for $u\in
(0,\delta_0].$ To simplify the notation, we will  extend  $g$ linearly and $C^1-$smoothly on $(-\infty,0]$.

\vspace{1mm}

From \cite{AF} we know that above conditions imposed on $w_0$  are sufficient for the existence of a unique classical solution
$u= u(t,x,w_0): [-h,+\infty) \times \R \to \R_+$ to (\ref{e1}),
(\ref{e2}) (i.e. of a continuous and bounded (at least, on finite time intervals) function $u$ having continuous
derivatives $u_t, u_x, u_{xx}$ in $\Omega = (0,+\infty)\times \R$
and satisfying (\ref{e1}) in $\Omega$ as well as (\ref{e2}) in
$[-h,0]\times \R$). 
 We will show that, similarly to $w_0(s,x)$ and $\phi(x+cs,w_0)$, the function $w_{(t)}(s,x)=u(t+s,x,w_0)$, $(s,x)\in \Pi_0,$ will also belong to the class $\mathcal{F}$, for each fixed $t>0$.
 
In this way, the concept of `speed selection' reflects the evident fact that the properties of $w_0$ may 
determine the speed of propagation of the initial `concentration' (of something) $w_0(s,x)$ from the right side of
the $x$-axis $\R$ (where $w_0$ is separated from $0$) to the left side of $\R$ (where $w_0$ vanishes). Moreover,   
in the non-delayed case (when $h =0$) it is well known  \cite{ES} that, given a converging  solution $u(t,x,w_0) \leadsto  \phi(x+ct,w_0)$, the speed of propagation $c$ `choosen' by $u(t,x,w_0)$ depends mainly only on the  asymptotic behavior of $w_0(s,x)$ at $x =-\infty$.  It is clear also  that the speed selection problem is closely related to the front stability question: indeed, if some wavefront  $u = \phi(x+c_0t)$ is stable (in an appropriate metric phase space), then  each initial datum $w_0(s,x)$ close  to $\phi(x+c_0s)$ yields a  `concentration' distribution $u(t,x)$ propagating to the left of $\R$ with the same velocity $c_0$.  Below we will give precise 
mathematical formulations for the above  informal discussion. 

The studies of wavefront stability in monostable monotone delayed model (\ref{e1}) (including its non-local and discrete Laplacian versions) were initiated in 2004-2005 by Mei {\it et al.} \cite{MeiF} and  Ma and Zou \cite{MaZou}. 
Their research was influenced by a series of previous results about a) the existence of monotone wavefronts \cite{ma,wz}; b) the stability of wavefronts in delayed  bistable  equations \cite{OM,SZ} and discrete monostable 
equations \cite{CG1}.  Over the last decade, the wave stability problem for equation (\ref{e1}) has attracted attention of many other mathematicians so that it would be difficult to mention all interesting findings in this area. We believe, however, that the strongest results concerning the wavefront stability in the {\it monotone} Mackey-Glass type reaction-diffusion equation (\ref{e1}) can be found in \cite{LvW,MLLS,MLLS2,MeiI,WLR} (see also \cite{CMYZ,IGT,FGT,KGB,TT,WZL} and references therein for the case of  unimodal birth function $g$).  
In our work,  rather then writing statements of the aforementioned  results from \cite{LvW,MLLS,MLLS2,MeiI,WLR}, we prefer to  discuss their relations with  our two main theorems announced below.  

Now, two different approaches were employed in 
the cited works: a weighted energy approach \cite{MeiF,MLLS,MLLS2,MeiI} and the super- and sub-solution method \cite{MaZou,WLR}.  
The stability of monotone wavefronts to (\ref{e1}) was always proved under rather strong smoothness  ($C^2$-smoothness) and shape conditions on $g$. In particular, hypotheses imposed on $g$  were always  sufficient to assure the inequality  $g'(x)\leq g'(0)$ for all $x\in [0,\kappa]$ (cf. \cite[Subsection 1.2]{TPT}).  The latter condition, however, excludes a subclass of equations (\ref{e1}) possessing so called pushed minimal traveling fronts \cite{ES,TPT}. Since pushed wavefronts are quite interesting from both applied \cite{JP,RGHK} and mathematical  \cite{BGHR,ID,HR,LZh,roth,ES,ST,TPT} points of view, their existence, uniqueness  and stability properties in the case of delayed monotone model (\ref{e1}) were recently considered in \cite{LZh,STR,TPT}.  Particularly, the existence of the minimal 
speed of front propagation $c_*$  was proved in \cite{LZh,TPT} (if $g$ is neither monotone nor subtangential at $0$, the existence of $c_*$ is an important open problem). It should be also observed that, in general, either analytical determination or numerical  approximation of the exact value of $c_*$ is a quite difficult task \cite{BD,HR,ES,TPT}. By \cite{LZh,STR}, $c_*$ coincides with the {\it asymptotic speed of propagation} (this important concept was proposed by Aronson and Weinberger  \cite{AW} in 1977). Next,  the stability of pushed wavefronts to (\ref{e1}) was also investigated  in \cite{STR}. 

In the present work, we continue our  studies  in  \cite{STR}, by analysing stability of other (i.e. not necessarily minimal) wavefronts $u =\phi(x+ct),$ $c \geq c_*$, to equation (\ref{e1}). One of the main difference with the previous works consists in  generally non-convex and non-smooth nature of the monotone  birth function $g$: for instance, in our first results below,  we do not even require  the subtangency condition $$g(x)\leq g'(0)x, \ x \geq 0.$$ 

Before announcing our first theorem, we recall \cite{TPT} that the condition  $c \geq c_*$ implies that the characteristic equation at the trivial steady state 
$$
\chi_0(\lambda):= \lambda^2-c\lambda-1 + g'(0)e^{-\lambda ch}=0
$$
has exactly two real roots $\lambda_1=\lambda_1(c) \leq \lambda_2=\lambda_2(c)$ (counting multiplicity), both of them are positive.   Note also that $-\lambda_1(c), \lambda_2(c)$ are increasing functions of $c$.

Next, for a non-negative $\lambda$, the norm $|f|_\lambda$ of function $f:\R \to \R$ is defined as 
$$
|f|_\lambda = \max\{\sup_{t\leq 0}e^{-\lambda t}|f(t)|,\ \sup_{t \geq 0}|f(t)|\}. 
$$ 
If we set $\eta_\lambda(t) = \min\{e^{\lambda t}, 1\}$ then clearly
$$
|f|_\lambda = \sup_{t \in \R}{|f(t)|}/{\eta_\lambda(t)}.
$$
The main result of this paper is  the following 
\begin{theorem}\label{MER}
Assume that the  initial function $w_{0}$  satisfies  the hypotheses $(IC1)$, $(IC2)$ and that, for some
$A >0$ and $c> c_*$, it holds 
$$
\lim_{x\to -\infty} w_0(s,x)
e^{-\lambda_1(c)(x+cs)} =A
$$
uniformly on $s\in[-h,0]$. If, in addition, the birth function $g$ is strictly increasing and satisfies  {\rm \bf(H)}, then 
the solution of (\ref{e1}),
(\ref{e2}) satisfies 
\begin{equation}\label{RoC}
\lim_{t\rightarrow\infty}\sup_{x \in \R}\frac{|\phi(x+ct+a)-u(t,x)|}{\eta_{\lambda_1}(x+ct)}=0,
\end{equation}
where $a=(\lambda_1(c))^{-1}\ln A$ and the front profile $\phi$ (existing in virtue of the assumption $c> c_*$) is normalised by  $\lim_{x\to-\infty}e^{-\lambda_1(c)x}\phi(x)=1$.
\end{theorem}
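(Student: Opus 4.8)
The plan is to prove this stability result via the classical super- and sub-solution (squeezing) method, adapted to the weighted norm $|\cdot|_{\lambda_1}$, exploiting the strict monotonicity of $g$ to obtain a comparison principle for (\ref{e1}). First I would establish the comparison principle: because $g$ is strictly increasing (and $g$ has been extended monotonically to $(-\infty,0]$), the map $u\mapsto -u+g(u(\cdot-h))$ is quasimonotone, so standard parabolic comparison theory (as already invoked via \cite{AF}) applies to ordered pairs of super/sub-solutions on $\Pi_0\cup\Omega$. In particular, I would record that $w_{(t)}\in\mathcal F$ for all $t>0$, as announced just before the theorem, and that solutions are order-preserving in their initial data.

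\smallskip

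The core of the argument is the construction of a pair of super- and sub-solutions that trap $u(t,x,w_0)$ and collapse onto the shifted front $\phi(x+ct+a)$. The natural ansatz, borrowed from the literature on pushed/pulled fronts, is
$$
u^{\pm}(t,x)=\phi\bigl(x+ct+a\pm\sigma(1-e^{-\varepsilon t})\bigr)\pm\delta e^{-\varepsilon t}\,\eta_{\lambda_1}(x+ct),
$$
for suitably small $\varepsilon,\delta,\sigma>0$. Plugging $u^{\pm}$ into (\ref{e1}) and using the profile equation for $\phi$, the $C^{1,\theta}$-smoothness of $g$ near $0$ and $\kappa$, the Lipschitz bound $L_g$ globally, and the defining property $\chi_0(\lambda_1)=0$ of the weight exponent, one reduces the super/sub-solution inequalities to a finite collection of elementary estimates: near $x=-\infty$ the weight $\eta_{\lambda_1}=e^{\lambda_1(x+ct)}$ makes the linearized-at-$0$ terms cancel to leading order via $\chi_0(\lambda_1)=0$, and the $C^{1,\theta}$ remainder $|g'(u)-g'(0)|\le Cu^\theta$ controls the nonlinear correction because $\phi(\xi)\sim e^{\lambda_1\xi}$ there; near $x=+\infty$ one uses $g'(\kappa)<1$ and exponential decay of $\kappa-\phi$; on compact sets one uses strict monotonicity $\phi'>0$ to absorb errors into the $\sigma$-shift. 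The hypothesis $\lim_{x\to-\infty}w_0(s,x)e^{-\lambda_1(x+cs)}=A$, equivalently $w_0(s,x)\approx A e^{\lambda_1(x+cs)}=e^{\lambda_1(x+cs+a)}$, is exactly what is needed to sandwich the initial datum: $u^{-}(0,x)\le w_0(s,x)\le u^{+}(0,x)$ on $\Pi_0$ for $\delta$ large relative to $A$ but $\varepsilon,\sigma$ small. Comparison then gives $u^{-}(t,x)\le u(t,x,w_0)\le u^{+}(t,x)$ for all $t>0$, and letting $t\to\infty$ yields (\ref{RoC}) since $u^{\pm}(t,x)\to\phi(x+ct+a\pm\sigma)$ uniformly in the weighted norm and $\sigma$ can be taken arbitrarily small; a standard iteration over shrinking $\sigma$ closes the gap.

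\smallskip

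The main obstacle I anticipate is twofold and concentrated entirely at $x\to-\infty$. First, because we deliberately do \emph{not} assume subtangency ($g(x)\le g'(0)x$) nor convexity, the front $\phi$ need not lie below its tangent line at $0$, and $c>c_*$ merely guarantees \emph{two} positive roots $\lambda_1\le\lambda_2$; I must verify that the chosen normalization $\phi(\xi)\sim e^{\lambda_1\xi}$ (rather than $\sim e^{\lambda_2\xi}$) is the correct and only possible asymptotics for the non-critical front, and that no resonant polynomial-times-exponential term appears — this is where the strict inequality $c>c_*$ (ensuring $\lambda_1<\lambda_2$, simple roots) is essential. Second, the weighted super/sub-solution inequalities near $-\infty$ are genuinely delicate because both the principal part and the delayed term carry the same exponential rate $e^{\lambda_1(x+ct)}$; the margin comes only from the strict sign of $\chi_0'$ at the relevant root and from the $u^\theta$ Hölder gain, so the smallness of $\varepsilon$ must be tuned against $\theta$, $C$, and the spectral gap, and one must check the delay term $\eta_{\lambda_1}(x+c(t-h))=e^{-\lambda_1 ch}\eta_{\lambda_1}(x+ct)$ contributes exactly the factor $e^{-\lambda_1 ch}$ appearing in $\chi_0$. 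The remaining regions ($+\infty$ and compacta) and the passage to the limit are routine given the comparison principle and the results already established in \cite{TPT,STR,AF}.
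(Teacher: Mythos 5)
Your ansatz $u^{\pm}(t,x)=\phi(x+ct+a\pm\sigma(1-e^{-\varepsilon t}))\pm\delta e^{-\varepsilon t}\eta_{\lambda_1}(x+ct)$ is essentially the pair used in the paper's Lemma \ref{uls}, but there it is a super-/sub-solution pair only for \emph{small} amplitude $\delta\leq\min\{q^*,q_*\}$: the verification of $\pm\mathcal{N}u_{\pm}\geq 0$ in the region where $\phi$ is close to $\kappa$ rests on the estimates (\ref{gg1})--(\ref{gg}), which hold only for $(u,q)\in[\kappa-\delta^*,\kappa]\times[0,q_*]$, i.e.\ on $g$ being nearly a contraction in a small neighbourhood of $\kappa$. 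Since no subtangency or shape condition is assumed, the global Lipschitz constant $L_g$ may well exceed $1$, and for large $\delta$ the delayed term $g(u^{\pm}(t-h,\cdot))-g(\phi(\cdot))$ is controlled only by $L_g\delta$, which destroys the differential inequality. On the other hand, under the hypotheses of Theorem \ref{MER} the initial sandwiching $u^-(0,\cdot)\leq w_0\leq u^+(0,\cdot)$ genuinely forces $\delta$ to be large: $(IC1)$--$(IC2)$ allow $w_0$ to vanish on arbitrarily large compact sets and to stay far from $\kappa$ near $+\infty$ (so the sub-solution needs $\delta$ of order $\kappa$), and even at $-\infty$ the delay forces $\delta\gtrsim A(1-e^{-\lambda_1 ch})$ because $w_0(s,x)\sim Ae^{\lambda_1(x+cs)}<Ae^{\lambda_1 x}\sim\phi(x+a)$ for $s<0$. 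These two requirements are incompatible, so the one-shot global squeezing cannot be carried out. A second, smaller gap: the terminal shift $\sigma$ in such constructions is proportional to $\delta$ (in the paper, $\epsilon_+(+\infty)=\alpha qe^{\gamma h}/\gamma$), so it cannot be "taken arbitrarily small" independently and no iteration over shrinking $\sigma$ is available; the phase $a$ must be pinned by a separate argument.

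What is actually needed -- and what the paper does -- is a three-stage argument that your proposal does not contain. First, an upper bound valid for data that are merely bounded but have the right $e^{\lambda_1}$-asymptotics is obtained from a bespoke unbounded super-solution $\psi_+$ (Lemma \ref{Lem2}), built from the solution of the profile equation with the \emph{faster} decay $e^{\lambda_2 z}$ plus a perturbation $\epsilon(e^{\lambda_1 t}+e^{\mu t})$; showing that this solution is increasing and exceeds $\kappa$ requires a sliding/uniqueness argument against the monotone front, not a routine computation. Second, a lower bound is obtained by comparing with a modified equation whose nonlinearity $\hat g\leq g$ satisfies $\hat g'(x)\leq\hat g'(0)=g'(0)$, for which the global stability Theorem \ref{Te3}A (which \emph{does} use $L_{\hat g}=\hat g'(0)$) applies. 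Third, the monotone evolution from super-/sub-solution data (Corollary \ref{cod1}), an $\omega$-limit set trapping argument, and the invariance of the leading asymptotic term at $-\infty$ identify the limit as $\phi(\cdot+a)$ with the correct $a$; only at that point does the small-amplitude local stability (your ansatz, via Corollary \ref{coco}) finish the proof. In short, your construction is the right tool for the final local step, but the global part of Theorem \ref{MER} -- which is its whole point, given the absence of subtangency and of any smallness assumption -- is missing.
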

Theorem  \ref{MER} allows to answer the velocity selection question  for solutions with  initial data possessing  
exponential decay at $-\infty$. Indeed, suppose that, for some $\lambda >0$, it holds
\begin{equation}\label{sps}
\lim_{x\to -\infty} w_0(s,x)
e^{-\lambda x} =A(s) >0, \ \mbox{uniformly in } s \in [-h,0]. 
\end{equation}
Then define $c(\lambda)$ by the  formula $c(\lambda) = \mu/\lambda$, where $\mu$ is the unique positive root of the equation 
$$
\lambda^2 -\mu -1 +g'(0)e^{-\mu h} =0. 
$$
It is easy to see that $c(\lambda) \geq c_\#,$ where $c_\#= c_\#(g'(0),h)$ is the so-called critical speed (a uniquely determined value of $c$ for which the characteristic function $\chi_0(\lambda)$ has a double positive zero).  Set $\lambda_*:=\lambda_1(c_*)$.  We claim that 
\begin{eqnarray*} \hspace{-1cm}&&  c_\lambda:=\left\{
\begin{array}{ll} c(\lambda), & \mbox {if } \ \lambda < \lambda_*, \\
    c_*, &  \mbox {if } \ \lambda \geq \lambda_*,
\end{array}%
\right.
\end{eqnarray*}
is  the speed of propagation selected  by  solutions with  initial data satisfying (\ref{sps}). More precisely, the following assertion holds.
\begin{corollary} \label{Cor1} Assume that the initial function $w_{0}$  satisfies  the hypotheses $(IC1)$, $(IC2)$ and $(\ref{sps})$.  Suppose first that $\lambda  > \lambda_*$ and $c_* >c_\#$, then the solution of (\ref{e1}),
(\ref{e2}) satisfies 
$$
\lim_{t\to+\infty}\sup_{x \in \R}\frac{|\phi_*(x+c_*t)-u(t,x)|}{\eta_{\nu}(x+c_*t)} =0
$$
for each fixed $\nu \in (\lambda_*, \lambda)$. 
Here $\phi_*$ denotes the profile of appropriately shifted unique minimal (pushed) front to equation (\ref{e1}). 

Next, let $\lambda < \lambda_*$  (so that $c(\lambda) =c_\lambda$) and $c_* \geq c_\#$.  Set 
$$
a_-:= \frac{1}{\lambda}\ln\left[\min
_{s\in [-h,0]}A(s)e^{-\mu s}\right] \leq a_+:= \frac{1}{\lambda}\ln \left[\max
_{s\in [-h,0]}A(s)e^{-\mu s}\right]. $$
Then for every $\epsilon >0$ there exists $T_{1}({\epsilon}) >0$ such that 
$$
(1-\epsilon)\phi_\lambda(x+c_\lambda t+a_-) \leq u(t,x) \leq (1+\epsilon)\phi_\lambda(x+c_\lambda t+a_+), \quad t \geq T_{1}{(\epsilon)}, \ x \in \R. 
$$
Here $\phi_\lambda$ denotes the profile of the unique  wavefront to equation (\ref{e1}) propagating with the velocity $c(\lambda)$ and satisfying $
\lim_{x\to-\infty}e^{-\lambda x}\phi_\lambda(x)=1$.

Now, if $\lambda = \lambda_*$ and $c_* >c_\#$, then  there exists  $a' \in \R$ such that
 for every $\epsilon >0$ and positive $\nu <\lambda_*<M< \lambda_2(c_*)$ it holds 
 \begin{equation}\label{lc}
\phi_*(x+c_*t+ a') - \epsilon \eta_{M}(x+c_*t) \leq u(t,x) \leq (1+\epsilon)\phi_\nu(x+c_\nu t), \ t \geq T_{2}, \ x \in \R,
\end{equation}
for an appropriate $T_2=T_{2}(\epsilon) >0$.  
Furthermore, in such a case, $u(t,x)$ can not converge, uniformly on $\R$, to a wavefront solution of equation   (\ref{e1}). 

Finally, if $L_g= g'(0)$ (so that $c_*=c_\#$) and  $\lambda \geq \lambda_*$,  then there exists  $b' \in \R$ such that for every $\epsilon >0$ 
\begin{equation}\label{lc2}
0 \leq u(t,x) \leq (1+\epsilon)\phi_*(x+c_* t+b'), \quad t \geq T_{3}, \ x \in \R, 
\end{equation}
whenever $T_3=T_{3}(\epsilon) >0$ is sufficiently large. 
\end{corollary}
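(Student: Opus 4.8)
The plan is to dispose of the four regimes of Corollary~\ref{Cor1} one at a time, in each of them trapping the solution $u(t,x)=u(t,x,w_0)$ between a sub- and a super-solution of (\ref{e1}) and applying the comparison principle (legitimate because $g$ is monotone). Two elementary facts will be used throughout. First, $u(t,x)\le y(t)$, where $y$ solves $y'=-y+g(y(t-h))$ with $y|_{[-h,0]}\equiv\max\{|w_0|_\infty,\kappa\}$; since (H) forces $g(v)<v$ for $v>\kappa$, one has $y(t)\downarrow\kappa$, whence for each $\epsilon>0$ there is $t_0(\epsilon)$ with $u(t,x)\le\kappa+\epsilon$ on $[t_0,\infty)\times\R$. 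Secondly, by $(IC2)$ and the monostable dynamics, $u(t,x)\ge\kappa-\epsilon$ for $t\ge t_0(\epsilon)$ and $x\ge R(\epsilon,t)$. Together these let one fit wavefront-shaped barriers to the slice $w_{(t_0)}$ regardless of the size and shape of $w_0$: when $|w_0|_\infty>\kappa$ one realises an upper barrier $(1+\epsilon)\phi_\bullet(x+c_\bullet t+b')$ as the minimum of the constant super-solution $\kappa+\epsilon$ and a leading-edge super-solution (or, in the pushed case, with a small correction localised on the $\kappa$-plateau and $\to 0$ in $t$, admissible since $g'(\kappa)<1$), a sufficiently large shift $b'$ then subordinating it to $(1+\epsilon)\phi_\bullet(x+c_\bullet t+b')$ for all large $t$.

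\emph{The case $\lambda<\lambda_*$.} Here $c:=c(\lambda)>c_*$ (because $\lambda_1$ is strictly decreasing and $\lambda<\lambda_1(c_*)$) and $\lambda=\lambda_1(c)$, so Theorem~\ref{MER} applies — except that (\ref{sps}) yields only the $s$-dependent limit $w_0(s,x)e^{-\lambda_1(c)(x+cs)}\to A(s)e^{-\mu s}\in[\,e^{\lambda a_-},e^{\lambda a_+}\,]$. First I would squeeze: with $\underline A:=e^{\lambda a_-}$, $\overline A:=e^{\lambda a_+}$, set $\underline w_0:=\min\{w_0,\underline Ae^{\lambda(x+cs)}\}$, $\overline w_0:=\max\{w_0,\min\{\overline Ae^{\lambda(x+cs)},K\}\}$ with $K$ large; a short check gives $\underline w_0,\overline w_0\in\mathcal F$, $\underline w_0\le w_0\le\overline w_0$, and that each satisfies the hypothesis of Theorem~\ref{MER} with $A$ replaced by $\underline A$, resp.\ $\overline A$. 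Theorem~\ref{MER} then yields convergence of $u(\cdot,\cdot,\underline w_0)$ and $u(\cdot,\cdot,\overline w_0)$, in the $\eta_\lambda$-norm, to $\phi_\lambda(\cdot+c_\lambda t+a_-)$ and $\phi_\lambda(\cdot+c_\lambda t+a_+)$; comparison and the elementary bound $\eta_\lambda(y)\le C_0\,\phi_\lambda(y+a)$ (both sides $\asymp e^{\lambda y}$ as $y\to-\infty$, bounded away from $0$ as $y\to+\infty$) turn this into the asserted two-sided estimate with factors $1\mp\epsilon$ for $t\ge T_1(\epsilon)$.

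\emph{The pushed regimes $c_*>c_\#$.} For $\lambda>\lambda_*$ the datum decays at $-\infty$ strictly faster than $e^{\lambda_* x}$, whereas the pushed profile satisfies $\phi_*(z)\asymp e^{\lambda_2(c_*)z}$ as $z\to-\infty$; I would run the pushed-wavefront technology of \cite{STR}, building a super-solution $\phi_*(x+c_*t+\zeta^+(t))+\delta\,e^{-\nu(x+c_*t)-\beta t}$ and a matching sub-solution with bounded phases $\zeta^\pm(t)\to\zeta^\pm_\infty$, $\nu$ slightly above $\lambda_*$ (and below $\min\{\lambda,\lambda_2(c_*)\}$; a faster remainder rate if $\lambda>\lambda_2(c_*)$), $\beta>0$ small, fitting them to $w_{(t_0)}$ via the two preliminaries. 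The spectral gap of the linearization about $\phi_*$ — exactly what makes the front pushed — is what lets the correction decay in $t$, and the squeeze, with $\eta_\nu(y)\asymp e^{\nu y}$ at $-\infty$, gives (\ref{RoC})-type convergence to $\phi_*$ in $|\cdot|_\nu$ for every $\nu\in(\lambda_*,\lambda)$. For $\lambda=\lambda_*$, the upper bound is again comparison: $w_0$ decays faster than $e^{\nu x}$ for $\nu<\lambda_*$, hence $w_0(s,x)\le C\phi_\nu(x+c_\nu s+b)$ on $\Pi_0$ and so $u(t,x)\le(1+\epsilon)\phi_\nu(x+c_\nu t)$ (after the $|w_0|_\infty>\kappa$ adjustment of the preliminaries); the lower bound uses $\phi_*$ as a sub-solution with a fast-decaying defect, $\phi_*(x+c_*t+\zeta(t))-\epsilon\,\eta_M(x+c_*t)$, $M\in(\lambda_*,\lambda_2(c_*))$, $\zeta(t)\to a'$, which stays a sub-solution precisely because $\eta_M$ decays faster than $\phi_*'$. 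Finally, non-convergence: a leading-edge sub-solution $\delta\,e^{\lambda_*(x+c_*t)}-\delta'e^{(\lambda_*+\rho)(x+c_*t)}$ (admissible since $\chi_0(\lambda_*)=0$) forces $u(t,x)\ge\tfrac{\delta}{2}\,e^{\lambda_*(x+c_*t)}$ on $\{x+c_*t\le -R\}$; if $u(t,\cdot)$ converged uniformly on $\R$ to a wavefront $\phi(\cdot+ct+b)$, (\ref{lc}) would pin $c=c_*$ and $\phi=\phi_*$, contradicting $\phi_*(z)=o(e^{\lambda_* z})$ as $z\to-\infty$.

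\emph{The case $L_g=g'(0)$, $\lambda\ge\lambda_*$, and the main obstacle.} Now $L_g=g'(0)$ is the same as the subtangency $g(v)\le g'(0)v$, so $Ce^{\lambda_*(x+c_*t+\xi)}$ is a super-solution (using $\chi_0(\lambda_*)=0$, valid since $c_*=c_\#$), and hence so is $\bar U:=\min\{\kappa+\epsilon,Ce^{\lambda_*(x+c_*t+\xi)}\}$; fitting $\bar U$ above $w_{(t_0)}$ and then, for a large shift $b'$, comparing it with $(1+\epsilon)\phi_*(x+c_*t+b')$ via $\phi_*(z)\asymp|z|e^{\lambda_* z}$ at $-\infty$ and $\phi_*(z)\to\kappa$ at $+\infty$ yields (\ref{lc2}); the lower bound $u\ge0$ is trivial. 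I expect the real work of the corollary to sit in two places. The first is the pushed regimes ($c_*>c_\#$): one must have at hand the exponential-dichotomy/spectral-gap information for the variational equation about $\phi_*$ and engineer the space-time-decaying barriers, and the sub/super-solution estimates around a pushed front are delicate — this is the heart of the matter. The second is that, although (\ref{lc2}) looks like the ``easy'' KPP case, the rate-$\lambda$ tail has linear speed $c(\lambda)>c_*=c_\#$ when $\lambda>\lambda_*$, so $u\le(1+\epsilon)\phi_*(x+c_*t+b')$ is the non-trivial linear-selection bound and genuinely needs the sublinearity of $g$ (cf.\ \cite{ES} for $h=0$); the non-convergence assertion, by contrast, is a soft consequence of the slow leading-edge lower bound once the sandwich (\ref{lc}) is available.
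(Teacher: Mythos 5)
Your overall architecture --- trap $u$ between the solutions emanating from modified initial data with pure exponential tails at $-\infty$, then invoke Theorem \ref{MER} for the non-critical tails and the pushed-front stability theorem of \cite{STR} for $\lambda\ge\lambda_*$ --- is exactly the paper's. The case $\lambda<\lambda_*$ is handled as in the paper. Your non-convergence argument for $\lambda=\lambda_*$ (a time-independent sub-solution $\max\{0,\,\delta e^{\lambda_*z}-\delta'e^{(\lambda_*+\rho)z}\}$ in the frame $z=x+c_*t$, giving a lower bound of order $e^{\lambda_*z}$ that a pushed profile, decaying like $e^{\lambda_2(c_*)z}$, cannot dominate pointwise in $z$) is correct and somewhat more elementary than the paper's, which instead compares with a pulled minorant equation built from $\hat g\le g$ with $L_{\hat g}=g'(0)$ and uses Theorem \ref{Thm2} to produce a front-shaped lower bound with the $e^{\lambda_*z}$ decay. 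For the genuinely pushed bounds (the case $\lambda>\lambda_*$ and the left inequality in (\ref{lc})) the paper simply cites \cite[Theorem 1.4]{STR}; your plan to rebuild that machinery is only sketched and should be replaced by the citation.

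The genuine gap is in your upper barriers for the pushed regime $c_*>c_\#$, i.e.\ the right inequality of (\ref{lc}) and, more generally, your ``preliminary'' barrier $\min\{\kappa+\epsilon,\,Ce^{\nu(x+c_\nu t+\xi)}\}$. A pure exponential $Ce^{\nu(x+c_\nu t)}$ is a super-solution only where $g(v)\le g'(0)v$: one needs $g\bigl(Ce^{\nu(z-ch)}\bigr)\le Ce^{\nu z}(-\nu^{2}+c\nu+1)$, and before the constant branch takes over the argument of $g$ runs up to values of order $\kappa$. When $c_*>c_\#$ the subtangency $g(v)\le g'(0)v$ fails on part of $(0,\kappa)$ --- that is precisely what makes the minimal front pushed --- so the min-with-constant barrier is not a super-solution in the crossover region; likewise $C\phi_\nu$ with $C>1$ is not a super-solution without subtangency, and $C=1$ cannot dominate a datum with $|w_0|_\infty>\kappa$. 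The paper avoids this entirely: it feeds $\tilde w_2(s,x)=\max\{A_2e^{\nu(x+cs)},w_0(s,x)\}$ (with $A_2$ chosen small) into equation (\ref{e1}) as a new initial datum and applies Theorem \ref{MER} at speed $c_\nu$, whose proof already contains the correct non-exponential super-solution (Lemma \ref{Lem2}, built from a genuine solution of the profile equation with the fast decay rate). Your Case III upper bound should be rerouted through that device, which you already employ for $\lambda<\lambda_*$. By contrast, in the final case $L_g=g'(0)$ the subtangency does hold, so your derivation of (\ref{lc2}) via $\min\{\kappa+\epsilon,\,Ce^{\lambda_*(x+c_*t+\xi)}\}$ is sound (the paper obtains it by applying Theorem \ref{Thm2}A to $\max\{-xe^{\lambda_*(x+cs)},w_0\}$).
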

\vspace{1mm}
It is worth to note that there is an important difference between the speed selection results obtained in the non-delayed and delayed cases. Indeed, if $h=0$ and $\lambda < \lambda_*$ then $a_-=a_+$ and therefore $u(t,x)$ converges to a single 
wavefront $\phi_\lambda(x+c_\lambda t+a_\pm)$ propagating with the velocity $c_\lambda = \lambda + (g'(0)-1)/\lambda$.  In the delayed case, however, we only can say that $u(t,x)$ evolves between two shifted traveling fronts, both of them moving with the  same velocity $c_\lambda$.  Observe also that, since $\mu=\mu(h)$ is a decreasing function of $h$, the  inclusion of delay in problems modeled by (\ref{e1}) slows down the propagation of `concentrations' having the same initial distribution which satisfies  (\ref{sps}). 
\begin{remark}  Consider again the final statement of Corollary \ref{Cor1}. Under conditions assumed in it (at least when additionally $\lambda > \lambda_*$), it is natural to expect \cite{ES} the so-called {\it convergence in form} of $u(t,x)$ to the minimal 
wavefront: that is 
$$
\sup_{x\in \R}|u(t,x)-\phi_*(x+c(t))| \to 0, \ \mbox{as} \ t \to +\infty,
$$
for an appropriate function $c(t)$. Then (\ref{lc2}) implies that the function $c(t) - c_*t$ is bounded from above: in other words, in such a case,  the concentration $u(t,x)$ should  propagate behind the minimal front. A more detailed analysis  of this phenomenon for some delayed reaction-difusion models will be given in the forthcoming work by the authors. 
\end{remark}

Another immediate consequence of Theorem  \ref{MER} is  the following assertion concerning the global asymptotic stability (without asymptotic phase) of wavefronts: 
\begin{corollary}\label{cor2} Let $g$ and $w_0$ satisfy the assumptions {\rm \bf(H)} and $(IC1)$, $(IC2)$. If $g$ is strictly increasing  and 
\begin{equation}\label{sneg}
\sup_{s \in [-h,0]}|\phi(\cdot+cs)-w_0(s,\cdot)|_{\mu} < \infty
\end{equation}
for some $c> c_*$ and $\mu >\lambda_1(c)$, then the solution of (\ref{e1}),
(\ref{e2}) satisfies 
\begin{equation*}
\lim_{t\rightarrow\infty}\sup_{x \in \R}\frac{|\phi(x+ct)-u(t,x)|}{\eta_{\lambda_1}(x+ct)}=0.
\end{equation*}
\end{corollary}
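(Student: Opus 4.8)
The plan is to obtain Corollary \ref{cor2} as a direct specialisation of Theorem \ref{MER}: I will check that hypothesis (\ref{sneg}) already encodes the sharp exponential asymptotics of $w_0$ at $-\infty$ required by Theorem \ref{MER}, and moreover with coefficient $A=1$, so that the shift produced by that theorem vanishes.

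First I would convert (\ref{sneg}) into a pointwise bound. Finiteness of $\sup_{s\in[-h,0]}|\phi(\cdot+cs)-w_0(s,\cdot)|_\mu$ furnishes a constant $C>0$ with $|w_0(s,x)-\phi(x+cs)|\le Ce^{\mu x}$ for every $x\le 0$ and $s\in[-h,0]$ (and $|w_0(s,x)-\phi(x+cs)|\le C$ for $x\ge 0$). In particular $w_0$ decays exponentially at $-\infty$, uniformly in $s$, so together with $(IC1)$, $(IC2)$ and the standing local H\"older continuity assumption on initial data we have $w_0\in\mathcal{F}$, which guarantees the classical solution $u(t,x,w_0)$ appearing in the statement and puts us inside the framework of Theorem \ref{MER}.

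Next I would evaluate the limit required by Theorem \ref{MER}. Since $\phi$ is normalised by $\lim_{y\to-\infty}e^{-\lambda_1(c)y}\phi(y)=1$ and $[-h,0]$ is compact, one has $x+cs\to-\infty$ uniformly in $s$ as $x\to-\infty$, whence $\phi(x+cs)e^{-\lambda_1(c)(x+cs)}\to 1$ uniformly in $s$. On the other hand, by the pointwise bound of the previous paragraph together with $\mu>\lambda_1(c)$ and the boundedness of $e^{-\lambda_1(c)cs}$ on $[-h,0]$,
\[
\Bigl|\bigl(w_0(s,x)-\phi(x+cs)\bigr)e^{-\lambda_1(c)(x+cs)}\Bigr|\le C\,e^{-\lambda_1(c)cs}\,e^{(\mu-\lambda_1(c))x}\to 0
\]
as $x\to-\infty$, uniformly on $s\in[-h,0]$. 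Adding the two relations gives $\lim_{x\to-\infty}w_0(s,x)e^{-\lambda_1(c)(x+cs)}=1$ uniformly in $s$; that is, the hypothesis of Theorem \ref{MER} holds with $A=1$. Consequently $a=(\lambda_1(c))^{-1}\ln 1=0$, and the conclusion of Theorem \ref{MER} becomes exactly
\[
\lim_{t\to\infty}\sup_{x\in\R}\frac{|\phi(x+ct)-u(t,x)|}{\eta_{\lambda_1}(x+ct)}=0,
\]
which is the assertion of Corollary \ref{cor2}.

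I do not anticipate a genuine obstacle; the argument is purely a bookkeeping reduction. The one point requiring care is the \emph{uniformity in $s\in[-h,0]$} of the two limits in the previous paragraph, which is why I keep the explicit factor $e^{(\mu-\lambda_1(c))x}$ with its strictly negative exponent and exploit the compactness of $[-h,0]$. Conceptually, the role of (\ref{sneg}) is to force the perturbation $w_0(s,\cdot)-\phi(\cdot+cs)$ to decay strictly faster (order $e^{\mu x}$, $\mu>\lambda_1(c)$) than the front itself (order $e^{\lambda_1(c)x}$); this is precisely what pins the coefficient $A$ at the value $1$ and thereby leaves no asymptotic phase to be adjusted in the conclusion.
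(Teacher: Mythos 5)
Your proposal is correct and is precisely the reduction the paper intends: the authors present Corollary \ref{cor2} as an immediate consequence of Theorem \ref{MER}, and your verification that (\ref{sneg}) with $\mu>\lambda_1(c)$ forces $\lim_{x\to-\infty}w_0(s,x)e^{-\lambda_1(c)(x+cs)}=1$ uniformly in $s$ (hence $A=1$, $a=0$) is exactly the missing bookkeeping. The only cosmetic remark is that if $\phi$ in the corollary is not assumed pre-normalised, the same computation yields $A=\lim_{y\to-\infty}e^{-\lambda_1(c)y}\phi(y)$ and the resulting shift $a=\lambda_1^{-1}\ln A$ converts the normalised profile back into the given $\phi$, so the conclusion is unchanged.
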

Clearly, the statement of  Theorem \ref{MER} (or Corollary \ref{cor2}) implies the uniqueness (up to a translation) of non-critical traveling  fronts propagating with the same velocity $c$ and having the same order of exponential decay at $-\infty$, cf. e.g. \cite[Theorem 1.1]{MaZou}, \cite[Corollary 4.9]{WLR}.  In any event, the uniqueness of each front (including critical one) to the monotone model  (\ref{e1}) was established in \cite[Theorem 1.2]{TPT} by means of the Berestycki-Nirenberg method of the sliding solutions. In the case when $g$ is non-monotone, the 
wave uniqueness  was investigated in \cite{AGT}, by applying a suitable $L^2$-variant of the bootstrap argument suggested by  Mallet-Paret in \cite{FA}. We recall here  that, in the case of a unimodal birth-function $g$, equation (\ref{e1}) can possess non-monotone wavefronts (either slowly oscillating or eventually monotone). This fact was deduced in  \cite{IGT,FGT,TT} from the seminal results \cite{morse,FA,mps,mps2} by Mallet-Paret and Sell.  

It is instructive to compare Theorem  \ref{MER} and Corollary \ref{cor2} with the  corresponding results from the above mentioned works    \cite{MLLS,MLLS2,MeiI,MeiF,WLR} (restricting them to the particular family of the Mackey-Glass type diffusive equations (\ref{e1})).   It is easy to check that Theorem  \ref{MER} amplifies  
 Theorem 4.1 from \cite{WLR} which was proved under more restrictive smoothness and geometric conditions on $g$ and $w_0$. (Theorem \ref{Thm2}A below also extends the mentioned result by Wang {\it et al.} for the critical case $c=c_\#$). In particular, the assumptions of \cite{WLR} contain the  inequality $g'(x) \leq g'(0),$ $x\geq 0$, which excludes  from consideration the pushed waves, see \cite[Subsection 1.2]{TPT} for more detail. The approach 
 of \cite{WLR} is  a version of the super- and sub-solutions method proposed in \cite{CG1} and then further developed in \cite{MaZou}.  The proofs given in the present paper are also based on the  squeezing technique and the Phragm\`en-Lindel\"of principle  for  reaction-diffusion equations. Hence, we are also using adequate super- and sub-solutions (which generally are not $C^1$-smooth and are simpler than those considered in \cite{CG1,MaZou,WLR}. In particular, the latter fact allows  to shorten  the proofs).  
 
 Another important approach to the wave stability problem in (\ref{e1}) is a weighted energy  method  developed  by Mei {\it et al.} \cite{MLLS,MLLS2,MeiI,MeiF}. See also Kyrychko {\it et al.}  \cite{KGB}, Lv and Wang \cite{LvW}, Wu {\it et al.} \cite{WZL}.  This method is based on  rather technical weighted energy estimations and generally  requires better properties from $g$ and $w_0$. For instance, it was assumed in \cite{LvW,MeiI} that $g''(x) \leq 0,$ $x \geq 0$, and that the weighted  initial perturbation $\delta(s,x) = (\phi(x+cs)-w_0(s,x))/\eta_\mu(x)$ belongs to the Sobolev space $H^1(\R)$ for some $\mu > \lambda_1$ and for each fixed $s \in [-h,0]$. It was also assumed  in \cite{LvW,MeiI}  that $\delta: [-h,0]\to H^1(\R)$ is a continuous function that implies immediately the fulfilment of (\ref{sneg}), in virtue of the corresponding embedding theorem. Therefore  Corollary \ref{cor2} can be also used in such a situation. However, in difference with Corollary \ref{cor2}, the  weighted energy  method allows to prove the {\it exponential} stability of non-critical traveling fronts. Consequently, it gives the same convergence rates  as the Sattinger  functional analytical approach \cite{STG} gives in the case of non-delayed version of  
(\ref{e1}). We recall that the latter approach is  based on the spectral analysis of  equation (\ref{e1}) linearised  along a wavefront.  Thus a certain disadvantage of  Theorem \ref{MER} as well as  \cite[Theorem 2]{CG1}, \cite[Theorem 5.1]{MaZou}, \cite[Theorem 4.1]{WLR}  is that they do not give any estimation of the rate of convergence in (\ref{RoC}).  
In this regard, it is a remarkable fact that  super-  and sub-solutions used in this work  are also suitable to provide rather short proofs  of the exponential  stability [asymptotical stability] of non-critical [respectively, critical] wavefronts in equation (\ref{e1}) considered with the monotone birth function $g$ satisfying relatively weak restrictions  (\textbf{H}) and 
$L_g= g'(0)$. For example, $L_g= g'(0)$ if
 $g$ is differentiable on $\R_+$ where 
$g'(x)\leq g'(0)$.
\begin{theorem}\label{Thm2}
In addition to (\textbf{H}), suppose that  $g$ is strictly increasing  and
$L_g= g'(0)$. If the initial function $w_0$ satisfies the assumptions (IC1), 
(IC2),  then the solution $u(t,x)$ of (\ref{e1}),
(\ref{e2}) satisfies the following.
\begin{itemize}
\item[A.]  
If $c\geq  c_\#$ and 
\begin{equation*}\label{AS}
\lim_{z\to -\infty}w_0(s,x)/\phi(x)= 1,
\end{equation*}
uniformly on $s \in [-h,0]$, then  
\begin{equation}\label{che}
|u(t,\cdot)/\phi(\cdot+ct)- 1|_0 =o(1), \ t \to +\infty.
\end{equation}
\item[B.]   If  $c>c_\#$ and $\lambda \in (\lambda_1(c), \lambda_2(c))$ then 
\begin{equation}\label{nevad}
\sup_{s \in [-h,0]}|\phi(\cdot+cs) -w_0(s,\cdot)|_\lambda < \infty
\end{equation}
implies that 
$$\sup_{x \in \R}\frac{|u(t,x)- \phi(x+ct)|}{\eta_{\lambda}(x+ct)}  \leq Ce^{-\gamma t }, \ t \geq 0,$$
for some
$C  >0$ and $\gamma >0$.
\end{itemize}
\end{theorem}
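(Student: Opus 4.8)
\emph{Outline of the proof.} Both parts rest on the comparison principle for (\ref{e1}), available because $g$ is nondecreasing: if $w^{-}$ and $w^{+}$ are, respectively, a bounded sub- and a bounded super-solution of (\ref{e1}) on $[0,+\infty)\times\R$ (of class $C^{0}$, corners being allowed) with $w^{-}\le w_{0}\le w^{+}$ on $\Pi_{0}$, then $w^{-}(t,x)\le u(t,x)\le w^{+}(t,x)$ for all $t\ge0$, $x\in\R$; this comes from the parabolic maximum principle --- supplemented by a Phragm\`en-Lindel\"of argument controlling the behaviour at $x=\pm\infty$ --- together with the step method on $[0,h],[h,2h],\dots$, the order being preserved by $v\mapsto g(v)$. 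The plan in each part is thus to produce one-parameter families of such barriers that trap $u$ and collapse onto $\phi(\cdot+ct)$ at the required rate. Writing a barrier as a perturbation of $\phi(x+ct)$ and using the profile identity $c\phi'=\phi''-\phi+g(\phi(\cdot-ch))$ turns the verification of $\mathcal{N}[w^{\pm}]:=\partial_{t}w^{\pm}-\partial_{xx}w^{\pm}+w^{\pm}-g(w^{\pm}(t-h,\cdot))\ge0$ (resp. $\le0$) into a differential inequality for the perturbation, which I would check separately on the left tail $R_{-}=\{z:\phi(z)\le\delta_{0}\}$, the right plateau $R_{+}=\{z:\phi(z)\ge\kappa-\delta_{0}\}$, and the compact core $R_{0}=\R\setminus(R_{-}\cup R_{+})$, where $\inf_{R_{0}}\phi'>0$ since the monotone front satisfies $\phi'>0$.

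For Part A with $c>c_{\#}=c_{*}$ the conclusion follows from Theorem \ref{MER} (taken with $A=1$, hence $a=0$): since $\phi(z)\sim e^{\lambda_{1}(c)z}$ as $z\to-\infty$ and $\phi(+\infty)=\kappa$, one has $\phi\asymp\eta_{\lambda_{1}}$ on $\R$, so the weighted estimate (\ref{RoC}) with $a=0$ is equivalent to (\ref{che}). The new case is the critical one $c=c_{\#}$, where $\lambda_{1}=\lambda_{2}=:\lambda_{\#}$ is a double zero of $\chi_{0}$, $\phi(z)\asymp|z|e^{\lambda_{\#}z}$ as $z\to-\infty$, and Theorem \ref{MER} does not apply. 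Here I would take, for each small $\epsilon>0$, barriers $w^{\pm}(t,x)=(1\pm\epsilon\rho(t))\,\phi(x+ct\pm\xi(t))$ with $\rho(t)\downarrow0$ and $\xi(t)$ bounded, slowly varying and such that $\dot\xi(t)\to0$ at the borderline (at most logarithmic) rate compatible with the $|z|e^{\lambda_{\#}z}$ tail and with the delay, using $g(y)\le g'(0)y$ on $R_{-}$, $g'(\kappa)<1$ on $R_{+}$, and $\inf_{R_{0}}\phi'>0$ on $R_{0}$ to absorb the residual terms produced by the $|z|e^{\lambda_{\#}z}$-correction and by the delay. The hypothesis $w_{0}(s,\cdot)/\phi(\cdot)\to1$ at $-\infty$ (uniformly in $s$) forces $w^{-}\le w_{0}\le w^{+}$ on $\Pi_{0}$ for every $\epsilon>0$, so the squeeze gives $\limsup_{t\to\infty}|u(t,\cdot)/\phi(\cdot+ct)-1|_{0}\le C\epsilon$, and $\epsilon\to0$ yields (\ref{che}).

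For Part B I would exploit the exponential margin at rate $\lambda\in(\lambda_{1}(c),\lambda_{2}(c))$: there $\chi_{0}(\lambda)<0$, since $\chi_{0}$ is convex with real zeros $\lambda_{1}<\lambda_{2}$. Fixing a smooth, positive, nondecreasing weight $\psi$ with $\psi(z)=e^{\lambda z}$ on $R_{-}$ and $\psi\equiv1$ on $R_{+}$ (so $\psi\asymp\eta_{\lambda}$), I would look for barriers $w^{\pm}(t,x)=\phi(x+ct)\pm q^{\pm}(t)\,\psi(x+ct)$, possibly corrected on $R_{0}$ by an exponentially small, conveniently signed phase shift, with $q^{\pm}(t)=O(e^{-\gamma t})$ and $\gamma>0$ small. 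Bounding $g(\phi+p)-g(\phi)$ by $g'(0)p$ for $p\ge0$ (and symmetrically for $p\le0$), the inequality $\mathcal{N}[w^{+}]\ge0$ reduces on $R_{-}$ to $-\chi_{0}(\lambda)-\gamma-g'(0)e^{-\lambda ch}(e^{\gamma h}-1)\ge0$, valid for small $\gamma$ because $-\chi_{0}(\lambda)>0$; on $R_{+}$, where $\phi+p$ lies in the $\delta_{0}$-neighbourhood of $\kappa$, the refined bound $g(\phi+p)-g(\phi)\le(g'(\kappa)+\epsilon_{0})p$ reduces it to $(1-\gamma)-(g'(\kappa)+\epsilon_{0})e^{\gamma h}>0$, valid because $g'(\kappa)<1$; and on $R_{0}$ the bounded residual of size $O(q^{+}(t))$ is dominated either by choosing $\psi$ with a sufficiently steep increasing slope there (so that $c\psi'$ prevails) or by the phase-shift term $\phi'(x+ct)\dot\xi^{+}(t)$, using $\inf_{R_{0}}\phi'>0$. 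Hypothesis (\ref{nevad}) says precisely $|\phi(\cdot+cs)-w_{0}(s,\cdot)|\le C\eta_{\lambda}(\cdot+cs)\asymp C\psi(\cdot+cs)$ on $\Pi_{0}$, which, after enlarging $q^{\pm}(0)$ if necessary, gives $w^{-}\le w_{0}\le w^{+}$ there; the comparison principle then yields $|u(t,x)-\phi(x+ct)|\le(q^{+}(t)+q^{-}(t))\,\psi(x+ct)\le Ce^{-\gamma t}\eta_{\lambda}(x+ct)$, as claimed.

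The step I expect to be the main obstacle is the design of the barriers across the compact core $R_{0}$ together with the bookkeeping of the delayed arguments $\psi(z-ch)$ and $\phi(z-ch\pm\xi(t-h))$: the crude Lipschitz estimate $g(\phi+p)-g(\phi)\le g'(0)p$ is too weak on $R_{+}$ (there $1-g'(0)<0$), so one must patch the refined estimate near $\kappa$ to the $\lambda$-exponential estimate near $-\infty$ across $R_{0}$, where only the strict positivity $\phi'>0$ and the latitude in $\psi$ (plus a carefully signed, exponentially small phase shift) keep $\mathcal{N}[w^{\pm}]$ of the right sign; in particular $\gamma$, $\delta_{0}$, the slope of $\psi$ and the phase amplitude have to be tuned jointly, and the transition zones between the three regimes need care. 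For Part A there is the further difficulty that the critical front is only marginally (non-exponentially) stable, so $\rho$ and $\dot\xi$ must be tuned to exactly the borderline algebraic decay permitted by the $|z|e^{\lambda_{\#}z}$ asymptotics and by the delay, and the squeeze must be run through an $\epsilon$-family of barriers rather than with a single pair.
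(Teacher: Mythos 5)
Your overall strategy (explicit sub-/super-solutions with a weight $\asymp\eta_\lambda$, verified separately near $-\infty$ and near $\kappa$, plus the comparison principle run stepwise over $[0,h],[h,2h],\dots$) is exactly the paper's mechanism, and your Part~B computations on the two tails are essentially those of Lemma~\ref{Sttg} (the paper even dispenses with your compact core $R_0$: it splits at the single point $b$ defined by $\phi(b-ch)=\kappa-\delta^*/2$, the crude Lipschitz bound with $L_g=g'(0)$ and the characteristic inequality handling all of $\{z<b\}$, the corner of $\eta_\lambda(\cdot-b)$ at $b$ having the favourable sign). The genuine gap in Part~B is the size of the perturbation. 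On $\{z>b\}$ the weight is $\equiv 1$ and the crude bound $g(\phi+p)-g(\phi)\le g'(0)p$ gives $1-\gamma-g'(0)e^{\gamma h}<0$; the inequality is rescued only by the near-$\kappa$ contraction (\ref{gg}), (\ref{gg1}), which requires $\phi(z-ch)+p$ to stay in a $\delta^*$-neighbourhood of $\kappa$, i.e.\ $0<q\le\min\{q^*,q_*\}$. Hypothesis (\ref{nevad}) only provides a \emph{finite} $q$, and ``enlarging $q^{\pm}(0)$'' destroys the super-solution property on the right half-line, so your barriers prove local, not global, exponential stability. The paper closes this by a two-step argument you are missing: since $\lambda>\lambda_1(c)$ forces $w_0/\phi\to1$ at $-\infty$, part~A first gives uniform convergence $w(t,\cdot)\to\phi$ \emph{without rate}, using the unbounded weight $\xi(z,\lambda)=e^{\lambda z}$ for which arbitrarily large $q$ is admissible (the delayed weight always carries the factor $e^{-\lambda ch}$ and $\chi_0(\lambda)<0$ does all the work, no near-$\kappa$ estimate being needed); only after a large time $T_1$, when the perturbation is uniformly below $\min\{q^*,q_*\}$, does one switch to the bounded weight and extract $e^{-\gamma t}$.

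Part~A is where the proposal falls substantially short. For $c>c_\#$ the reduction to Theorem~\ref{MER} is circular in this paper --- Theorem~\ref{MER} is itself proved (Theorem~\ref{Te4}) by invoking Theorem~\ref{Te3}A, which \emph{is} part~A --- and in any case the normalisations do not match: the hypothesis of part~A compares $w_0(s,\cdot)$ with $\phi(\cdot)$, so $w_0(s,x)e^{-\lambda_1(x+cs)}\to e^{-\lambda_1cs}$ is $s$-dependent and no single constant $A$ is available; the min/max trick of Corollary~\ref{Cor1} then only traps $u$ between two \emph{distinct} shifts, which does not yield (\ref{che}). For $c=c_\#$ your barriers are not actually constructed, and the two ingredients you name are both problematic: the multiplicative factor $(1\pm\epsilon\rho(t))$ requires $g(\mu v)\le\mu g(v)$ for $\mu\ge1$, i.e.\ $g(v)/v$ nonincreasing, which does not follow from (\textbf{H}) and $L_g=g'(0)$; and a drift $\dot\xi$ decaying at a ``logarithmic rate'' integrates to an unbounded total shift, so the $\epsilon$-squeeze cannot close onto the unshifted front. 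The paper's route avoids any rate estimate at the critical speed: it truncates $\phi(\cdot\mp\delta)\mp q\,e^{\lambda_1\cdot}$ into bounded, time-independent sub-/super-solutions, uses the Aronson--Weinberger monotonicity (Corollary~\ref{cod1}) to obtain monotone-in-time solutions whose limits solve the profile equation (\ref{EP}), identifies those limits as shifts $\phi(\cdot+\delta_\pm)$ with $|\delta_\pm|\le\delta$ by the uniqueness theorem of \cite{TPT}, upgrades the locally uniform convergence to uniform convergence on $\R$ by comparison with homogeneous (space-independent) solutions and a Phragm\`en--Lindel\"of argument at $+\infty$, and finally lets $\delta\to0$. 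You would need either this $\omega$-limit-set argument or a fully worked-out critical barrier; the sketch as written does not supply one.
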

To the best of our knowledge, the description of front convergence in the form (\ref{che}) was proposed by Chen and Guo \cite{CG1}. Clearly, this kind of convergence  is equivalent to the weighted convergence expressed by (\ref{RoC}) (if $c > c_\#$) and it is stronger than the uniform convergence 
$$
\sup_{x \in \R}|u(t,x) - \phi(x+ct)| \to 0, \ t \to +\infty. 
$$
The stability results stated in Theorem \ref{Thm2} have the global character in the sense that none smallness restriction is imposed on the norm (\ref{nevad}) of  perturbation $\phi(x+cs) -w_0(s,x)$.  Remarkably, in the case where we do not assume anymore that  $g$ is monotone,  our approach still  allows us  to prove the local stability of fronts.   Even more, we are also able to present some global stability results. In this way, our next main theorem and its corollary can be regarded as a further development of  \cite[Theorem 2.1]{LLLM} and \cite[Theorems 2.4 and 2.6]{WZL}. Before formulating the corresponding assertions, let us recall that the hypothesis
\vspace{1mm}

\noindent  {\rm \bf(UM)}  Let (\textbf{H}) be satisfied  and suppose that  
$L_g= g'(0)$ and $g$ is bounded

\vspace{1mm}
\noindent  implies the existence of a unique normalised (at $-\infty$) positive semi-wavewfront $u(t,x)=\phi_c(x+ct)$ to equation (\ref{e1}) for each $c \geq c_\#$, see e.g. \cite{AGT,SEDY}. We recall here that  the definition of a semi-wavewfront is similar to the definition of a wavefront: the only part that is changing is the boundary condition $\phi_c(+\infty) =\kappa$ which should be replaced with $\liminf_{x\to +\infty}\phi_c(x) >0$. 

\begin{theorem}\label{Thm3}
Assume (\textbf{UM}) and let the initial function $w_0$ satisfy  (IC1).  Consider  $c > c_\#,$ $\lambda \in (\lambda_1(c), \lambda_2(c))$ and set $\xi(x,\lambda)= e^{\lambda x}$.  Then the following holds. 
\begin{itemize}
\item[A.] The inequality (\ref{nevad}) implies that the solution $u(t,x)$ of (\ref{e1}),  (\ref{e2}) converges to the semi-wavefront $\phi_c(x+ct)$: more precisely, there are positive $C, \gamma$ such that 
$$\sup_{x \in \R}\frac{|u(t,x)- \phi(x+ct)|}{\xi(x+ct,\lambda)}  \leq Ce^{-\gamma t }, \ t \geq 0.$$
\item[B.]  Let, in addition,  $|g'(u)| <1$ on some interval
$[\kappa-\rho, \kappa + \rho]$, $\rho>0$.  If, for some $b\geq 0$, 
the initial function $w_0$ and the semi-wavefront profile $\phi_c$ satisfy$$
\kappa-\rho/4 \leq w_0(s,x),\phi(x+cs) < \kappa + \rho/4 \ \mbox{for all}\ x\geq b-ch, \ s \in [-h,0],
$$
$$
|w_0(s,x)-\phi(x+cs)| \leq 0.5\rho e^{\lambda(x+cs-b)},\  x\leq b, \ s \in [-h,0], 
$$
then $\phi$ is actually a wavefront (i.e. $\phi(+\infty)=\kappa$) and the solution $u(t,x)$ of (\ref{e1}),  (\ref{e2}) satisfies 
\begin{equation}
\label{gc}
\sup_{x \in \R}\frac{|u(t,x)- \phi(x+ct)|}{\eta_{\lambda}(x+ct)}  \leq 0.5\rho e^{-\gamma t }, \ t \geq 0,
\end{equation}
for some $\gamma >0$. 
\end{itemize}
\end{theorem}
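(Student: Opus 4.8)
The plan is to reduce both assertions to a linear comparison, which is feasible in spite of the possible non\nobreakdash-monotonicity of $g$ precisely because $L_g=g'(0)$. Put $v(t,x)=u(t,x)-\phi(x+ct)$, where I write $\phi=\phi_c$ for the semi\nobreakdash-wavefront profile furnished by (\textbf{UM}). Subtracting the profile equation from (\ref{e1}) gives
\begin{equation*}
v_t=v_{xx}-v+\beta(t,x)\,v(t-h,x),
\end{equation*}
where $\beta(t,x)$ denotes the secant slope of $g$ between $u(t-h,x)$ and $\phi(x+c(t-h))$ (some value of $g'$ where these coincide). Then $|\beta|\le L_g=g'(0)$ throughout, while $|\beta(t,x)|\le q:=\max_{[\kappa-\rho,\,\kappa+\rho]}|g'|<1$ as soon as $u(t-h,x)$ and $\phi(x+c(t-h))$ both lie in $[\kappa-\rho,\kappa+\rho]$. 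Since $c>c_\#$ forces $\lambda_1(c)<\lambda_2(c)$ while $\chi_0$ is convex with $\chi_0(0)=g'(0)-1>0$, we have $\chi_0(\lambda)<0$ on $(\lambda_1(c),\lambda_2(c))$; hence one may fix $\gamma>0$ so small that both
\begin{equation*}
\chi_0(\lambda)+\gamma+g'(0)e^{-\lambda ch}\bigl(e^{\gamma h}-1\bigr)<0
\qquad\text{and}\qquad
qe^{\gamma h}+\gamma<1 .
\end{equation*}

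For part A I would use the smooth barrier $Q(t,x)=Ce^{\lambda(x+ct)-\gamma t}$. A direct substitution together with the first inequality on $\gamma$ shows that $Q$ is a supersolution of the linear delayed equation $w_t=w_{xx}-w+g'(0)\,w(t-h,x)$, and the finiteness hypothesis (\ref{nevad}) permits choosing $C$ so large that $|v(s,x)|\le Q(s,x)$ on $[-h,0]\times\R$. The functions $Q\mp v$ are then nonnegative at $t=0$, bounded from below on $\R$ (both $u$ and $\phi$ are bounded), and, on $[0,h]$, satisfy $\psi_t\ge\psi_{xx}-\psi$ because $g'(0)Q(t-h,\cdot)\mp\beta\,v(t-h,\cdot)\ge(g'(0)-|\beta|)Q(t-h,\cdot)\ge0$ once $|v(t-h,\cdot)|\le Q(t-h,\cdot)$; the delay\nobreakdash-free Phragm\'{e}n--Lindel\"{o}f principle for $\partial_t-\partial_{xx}+1$ then gives $Q\mp v\ge0$ on $[0,h]$. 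Stepping through $[h,2h],[2h,3h],\dots$ yields $|v(t,x)|\le Q(t,x)$ for all $t\ge0$, which is exactly the claimed estimate with weight $\xi(x+ct,\lambda)=e^{\lambda(x+ct)}$.

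For part B I would first establish $\phi(+\infty)=\kappa$. The trapping hypothesis gives $\phi(y)\in[\kappa-\rho/4,\kappa+\rho/4)$ for $y\ge b-2ch$, so with $\psi(y)=\phi(y)-\kappa$ the profile equation reads $\psi''-c\psi'-\psi+\tilde b(y)\psi(y-ch)=0$ with $|\tilde b|\le q<1$ on that half\nobreakdash-line. Estimating the unique bounded\nobreakdash-on\nobreakdash-$[Y,\infty)$ solution of $\psi''-c\psi'-\psi=f$ by $\sup|f|$ — the pertinent Green kernel has unit $L^1$-norm because the roots $r_\pm$ of $r^2-cr-1=0$ satisfy $r_+\,|r_-|=1$ — leads to $\limsup|\psi|\le q\,\limsup|\psi|$, hence $\psi(y)\to0$ and $\phi$ is a genuine wavefront. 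I then take the piecewise barrier $Q(t,x)=\tfrac12\rho\,\eta_\lambda(x+ct-b)\,e^{-\gamma t}$, whose (concave) corner lies on the characteristic line $x+ct=b$: by the two inequalities on $\gamma$, $Q$ is a supersolution of $w_t=w_{xx}-w+g'(0)w(t-h,x)$ in $\{x+ct<b\}$ and of $w_t=w_{xx}-w+q\,w(t-h,x)$ in $\{x+ct>b\}$ (the corner only helps, since $-Q_{xx}$ there acquires a nonnegative singular part). Exactly as in part A — having first checked that the hypotheses on $w_0$ squeeze $v$ between $\pm Q$ on $[-h,0]\times\R$ — one runs the stepwise Phragm\'{e}n--Lindel\"{o}f comparison. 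The induction closes because, once $|v|\le Q$ up to time $t$, we have $|v|\le\tfrac12\rho$ and $\phi\in[\kappa-\rho/4,\kappa+\rho/4)$ on $\{x+ct\ge b-2ch\}$, whence $u(t,\cdot)\in[\kappa-3\rho/4,\kappa+3\rho/4]\subset[\kappa-\rho,\kappa+\rho]$ there; consequently the secant slope at the delayed point obeys $|\beta(t,x)|\le q$ exactly when $x+ct\ge b$, i.e. on the side of the corner where only the milder coefficient $q$ is required. This gives $|v(t,x)|\le\tfrac12\rho\,\eta_\lambda(x+ct-b)e^{-\gamma t}\le\tfrac12\rho\,\eta_\lambda(x+ct)e^{-\gamma t}$, which is (\ref{gc}).

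The step I expect to be the main obstacle is the bookkeeping in part B: the position of the corner of $Q$, the width $2ch$ of the ``delay shadow'' of the near\nobreakdash-$\kappa$ set, and the constant $\gamma$ must be chosen consistently so that the zone where $g'$ is bounded only by $g'(0)$ (and not by $q<1$) coincides with the zone where $Q$ decays exponentially, and so that the inductive confinement of $u$ to $[\kappa-\rho,\kappa+\rho]$ is self\nobreakdash-sustaining; some care is likewise needed to verify that the hypotheses on $w_0$ do dominate $v$ by $Q$ on all of $[-h,0]\times\R$ (including the thin set where $x>b$ but $x+cs<b$). A secondary technical point is justifying the delay\nobreakdash-free Phragm\'{e}n--Lindel\"{o}f step for the corner\nobreakdash-bearing functions $Q\mp v$ on the whole line, for which only a one\nobreakdash-sided growth bound — boundedness from below — is needed.
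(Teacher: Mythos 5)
Your proposal is correct and follows essentially the same route as the paper: part A is the paper's Lemma \ref{L1} (the pure exponential barrier $Ce^{\lambda(x+ct)-\gamma t}$, the Lipschitz bound $L_g=g'(0)$, and a stepwise Phragm\`en--Lindel\"of comparison over intervals of length $h$), and part B is Lemma \ref{L7} (the corner barrier $\tfrac12\rho\,\eta_\lambda(x+ct-b)e^{-\gamma t}$, with the coefficient $g'(0)$ governing the region left of the corner and $q=\max_{[\kappa-\rho,\kappa+\rho]}|g'|<1$ governing the right, sustained by the inductive confinement of $u$ to $[\kappa-3\rho/4,\kappa+3\rho/4]$ on the delayed half-line). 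The one place you genuinely diverge is the proof that $\phi(+\infty)=\kappa$: you give a self-contained ODE argument, writing $\psi=\phi-\kappa$, solving $\psi''-c\psi'-\psi=f$ with a Green kernel of unit $L^1$-norm (since $r_+|r_-|=1$) and concluding $\limsup|\psi|\le q\limsup|\psi|$, whereas the paper invokes \cite[Remark 12]{SEDY} to get $g([m,M])\supseteq[m,M]$ for $m=\liminf\phi$, $M=\limsup\phi$ and then uses that $g$ is a contraction near $\kappa$; your version has the advantage of not outsourcing the inclusion $g([m,M])\supseteq[m,M]$. Finally, the ``thin set'' issue you flag ($x>b$ but $x+cs<b$, where the stated hypothesis only yields $|v|\le\rho/2$ rather than $\le\tfrac12\rho e^{\lambda(x+cs-b)}$) is a real bookkeeping wrinkle, but it is equally present in the paper's own passage from the $x$-frame hypotheses of the theorem to the $z$-frame hypotheses of Lemma \ref{L7}; it is harmless and can be absorbed by a fixed shift of the corner or of the constant, so it does not constitute a gap in your argument relative to the paper's.
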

\begin{corollary}\label{co3}
Let $g$ satisfy (\textbf{UM})  and let $g$ be a unimodal function, with a unique point $x_m\in (0,\kappa)$ of local extremum (maximum).  Suppose further that $|g'(x)|<1$ for all $x \in [g(g(x_m)), g(x_m)]$. Additionally, assume  that 
the initial function $w_0$ satisfy  (IC1) and (IC2) and consider  $c > c_\#,$ $\lambda \in (\lambda_1(c), \lambda_2(c))$.  Then inequality (\ref{nevad}) implies that the solution $u(t,x)$ of (\ref{e1}),  (\ref{e2}) uniformly converges to the wavefront $\phi(x+ct)$. More precisely, there are positive $\rho, \gamma$ such that (\ref{gc}) holds. 
\end{corollary}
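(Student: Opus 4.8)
The plan is to derive Corollary~\ref{co3} from parts~A and~B of Theorem~\ref{Thm3}: one lets the solution evolve until, at some large time $t_0$, the pair consisting of $u(t_0+\cdot,\cdot)$ and a suitable space-translate of the semi-wavefront $\phi_c$ (which exists by (\textbf{UM})) satisfies the structural inequalities required in Theorem~\ref{Thm3}B, and then restarts the equation from $t_0$. Two preliminary consequences of the unimodal hypotheses must be recorded. Since $g$ is unimodal with maximal value $g(x_m)$ and $g(0)=0<x_m<\kappa$, the interval $J:=[g(g(x_m)),g(x_m)]$ contains $\kappa=g(\kappa)$ in its interior and absorbs every forward orbit of the scalar map $x\mapsto g(x)$ on $(0,+\infty)$; hence $|g'|<1$ on $J$ makes $g$ a strict contraction on $J$, so $\kappa$ is its global attractor, and, by the established theory of semi-wavefronts for unimodal birth functions (see \cite{AGT,SEDY} and the references of the Introduction), $\phi_c(+\infty)=\kappa$, i.e. $\phi_c$ is a genuine wavefront. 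Because $\kappa$ lies in the interior of $J$ and $g\in C^1$ near $\kappa$, I fix once and for all $\rho>0$ with $[\kappa-\rho,\kappa+\rho]\subset J$ and $|g'|<1$ on this interval --- this will serve as the $\rho$ of Theorem~\ref{Thm3}B --- together with $X_1$ such that $|\phi_c(z)-\kappa|<\rho/8$ for $z\ge X_1$.

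Next I would control $u$ in the two spatial regimes. On the left of the interface, Theorem~\ref{Thm3}A applied with the given $\lambda$ converts (\ref{nevad}) into the bound $|u(t,x)-\phi_c(x+ct)|\le Ce^{-\gamma_1t}e^{\lambda(x+ct)}$ for $t\ge0$; splitting at $x+ct=0$ and using that $u,\phi_c$ are bounded, this also shows that $|u(t,x)-\phi_c(x+ct)|/\eta_\lambda(x+ct)$ stays below a fixed constant on every interval $[0,t_0]$. Ahead of the interface, I would combine the monostable invasion of $\kappa$ triggered by (IC2) (note that $L_g=g'(0)$ by (\textbf{UM}), so $c>c_\#=c_*$ exceeds the minimal speed) with the fluctuation (liminf/limsup) method and the contraction $|g'|<1$ on $J$: a monotone minorant problem, handled by Theorem~\ref{MER}, provides a first positive lower barrier valid uniformly on $\{x+ct\ge0\}$, after which the liminf/limsup of $u$ over the advancing half-line satisfy the pinching inequalities whose only solution in $J$ is $\kappa$. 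The upshot is: for every large $t_0$ and all $s\in[-h,0]$, $u(t_0+s,x)\in[\kappa-\rho/4,\kappa+\rho/4)$ whenever $x+c(t_0+s)\ge X_1$.

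I would then restart. Fix $b\ge X_1+2ch$ and $t_0$ large enough (in terms of $b,C,\gamma_1,\rho$) and pass to $v(t,\zeta):=u(t_0+t,\zeta-ct_0)$; by translation invariance $v$ solves (\ref{e1}), (\ref{e2}) with initial datum $\bar w_0(s,\zeta)=u(t_0+s,\zeta-ct_0)$ and converges to the \emph{unshifted} profile $\phi_c(\zeta+ct)$. Then $\bar w_0$ and $\phi_c$ fulfil exactly the two hypotheses of Theorem~\ref{Thm3}B: on $\{\zeta\ge b-ch\}$ both lie in $[\kappa-\rho/4,\kappa+\rho/4)$ --- by the previous step, since $\zeta+cs\ge b-2ch\ge X_1$ --- and on $\{\zeta\le b\}$ their difference is at most $0.5\rho\,e^{\lambda(\zeta+cs-b)}$, which is just the part~A estimate rewritten in the variable $\zeta$, the translation $x\mapsto\zeta-ct_0$ being exactly what aligns the exponential weights and $t_0$ being large enough to make the emerging constant $\le0.5\rho$. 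Theorem~\ref{Thm3}B now delivers $\gamma>0$ and the estimate (\ref{gc}) for $v$ on all $t\ge0$; undoing the shift yields (\ref{gc}) for $u$ on $t\ge t_0$, and the transient bound above absorbs $t\in[0,t_0]$ after enlarging $\rho$ and, if necessary, decreasing $\gamma$. As $\eta_\lambda\le1$, (\ref{gc}) contains the claimed uniform convergence, and $\phi_c(+\infty)=\kappa$ has already been secured.

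I expect the genuine obstacle to be the invasion of $\kappa$ ahead of the interface --- upgrading the qualitative monostable spreading to the \emph{uniform} inclusion $u(t_0+s,\cdot)\in[\kappa-\rho/4,\kappa+\rho/4)$ over the whole half-line $\{x+c(t_0+s)\ge X_1\}$, which is where the loss of monotonicity of $g$ bites and where the collapse of the liminf/limsup envelopes to $\kappa$ must be quantified through $|g'|<1$ on $J$. The remaining ingredients --- the choice of $b$, the alignment of the weights when moving from part~A to part~B, and the final adjustment of constants --- are routine.
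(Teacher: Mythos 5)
Your overall architecture is the right one, and the bookkeeping you describe (the region $z\le b$ controlled by the weighted estimate of Theorem \ref{Thm3}A, the restart at a large time $t_0$, the final contraction-based exponential estimate split at $z=b$, and the identification of $\phi_c(+\infty)=\kappa$ from the contraction of $g$ on $J$) matches what the paper does. But the step you yourself flag as ``the genuine obstacle'' --- driving $u$ uniformly into $[\kappa-\rho/4,\kappa+\rho/4)$ on the whole advancing half-line $\{x+ct\ge X_1\}$ --- is left as a sketch, and the sketch (a liminf/limsup fluctuation argument for the non-monotone delayed reaction--diffusion equation) is genuinely nontrivial: the pinching inequalities $g([m,M])\supseteq[m,M]$ in the joint limit $t,z\to+\infty$ are not established anywhere in the paper for time-dependent solutions with non-monotone $g$, and the analogous argument in the proof of Theorem \ref{Te3}A relies essentially on the monotonicity of $g$ and on the monotonicity in $t$ of the solutions emanating from super- and sub-solutions. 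So as written your proof has a gap precisely at its hardest point.

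The paper's device (Lemma \ref{L3}) both fills this gap and shows that your intermediate claim is stronger than necessary. One sandwiches $g$ between two \emph{strictly increasing} functions $g_-\le g\le g_+$ on $[0,\kappa_+]$, coinciding with $g$ near $0$ and satisfying {\rm\bf(H)} with fixed points $\kappa_+>g(x_m)$ and $\kappa_-$ near $g(g(x_m))$, both chosen inside the region where $|g'|<1$. The comparison principle traps $w$ between the solutions $w_\pm$ of the monotone problems with the same initial datum, and Theorem \ref{Thm2}A (the already-proved \emph{monotone} global stability result) forces $w_\pm(t,z)\to\phi_\pm(z)\to\kappa_\pm$; hence $w(t,z)\in(\kappa_--\epsilon,\kappa_++\epsilon)$ for $z\ge b-ch$, $t\ge T-h$. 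This interval is wide --- essentially all of $J$ --- but that is enough: instead of invoking Theorem \ref{Thm3}B as a black box (whose hypotheses would indeed require the tight $\rho/4$ inclusion), one reruns the computation of Lemma \ref{L7} with the asymmetric amplitudes $\kappa-\kappa_-+2\epsilon$ and $\kappa_+-\kappa+2\epsilon$, using only $|g'|<1$ on $[\kappa_--\epsilon,\kappa_++\epsilon]$ via the mean value theorem. If you replace your fluctuation step by this monotone-envelope comparison, and your literal appeal to Theorem \ref{Thm3}B by the corresponding adaptation of its proof, your argument closes.
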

Let us illustrate Corollary \ref{co3} by considering the well-known diffusive version of the Nicholson's blowflies equation 
\begin{eqnarray} \label{nb}
u_{t}(t,x) &=& u_{xx}(t,x) -\delta u(t,x) + pu(t-\tau,x)e^{-u(t-\tau,x)}.  
\end{eqnarray}
By rescaling space-time coordinates, we transform this equation into the  form (\ref{e1}) with $g(x) = (p/\delta)xe^{-x}$ and  $h = \tau \delta$. In the last decade, 
the wavefront solutions of  equation (\ref{nb}) have been investigated by many authors, e.g. see \cite{CMYZ,IGT,FGT,LLLM,LvW,ma,MLLS,MeiI,MeiF,WLR,WZL}.  If the positive parameters $p,\delta$ are such that 
$1< p/\delta \leq e$, then $g$ is monotone and satisfies the hypothesis  {\rm \bf(H)}  with $L_g = g'(0)$ and 
$\kappa = \ln (p/\delta)$. In such a case, Theorem \ref{Thm2} guarantees the global stability of all wavefronts, including the minimal one (these wavefronts are necessarily monotone). For the first time, such a  global stability result  was established by Mei {et al.} in \cite{MeiI}.  Now, if $e< p/\delta < e^2$, the restriction of $g$ on $[0,\kappa]$ is not monotone anymore. Nevertheless, we still have that $L_g=g'(0)$ while the inequality $|g'(x)|<1$ holds for all $x \in [g(g(x_m)), g(x_m)]$, with $x_m=1$. Therefore, for each $p/\delta \in (e,e^2)$,  Corollary \ref{co3} assures the global exponential stability of all non-critical wavefronts to equation (\ref{nb}). Note that profiles of these wavefronts are not necessarily monotone and they can either slowly oscillate   around $\kappa$ or be  non-monotone but eventually monotone at $+\infty$, cf. \cite{IGT,FGT,LLLM}.  Observe also that the upper estimation $e^2$ for $p/\delta$ is optimal \cite{FGT,LLLM}. Under the same restriction $p/\delta \in (e,e^2)$,  the local stability of wavefronts to (\ref{nb}) was investigated in \cite{LLLM,WZL}.

To sum up: the main aim of the present work is to establish the stability properties of monostable wavefronts  
to the time-delayed reaction-diffusion model (\ref{e1}) with generally non-convex and non-smooth birth function $g$.  We are going to achieve this goal  by developing  suitable  ideas and methods from \cite{AW,FML,sch,STR,U}.

Finally, let us say a few words about the organization of the paper.  In Sections  \ref{sub2} and \ref{Sec4} we prove several auxiliary  comparison and stability results. Then
Theorem \ref{MER} (with Corollary \ref{Cor1}), Theorem  \ref{Thm2}  and Theorem \ref{Thm3} (with Corollary \ref{co3}) are proved in Sections \ref{sub4}, \ref{sub3}  and \ref{sub6}, respectively.

\section{Super- and sub-solutions: definition and properties} \label{sub2} 
The stability analysis of a wavefront $u=\phi(x+ct)$ is usually realised  in the co-moving coordinate frame  $z=x+ct$ so that 
$w(t,z):=u(t,z-ct)=u(t,x)$. Clearly,  $w$ satisfies the equation
\begin{eqnarray}\label{E1}
w_{t}(t,z)=w_{zz}(t,z)-cw_{z}(t,z)-w(t,z)+g(w(t-h,z-ch)), \end{eqnarray}
while the front profile $\phi(z)$ is a solution of the stationary equation 
\begin{eqnarray}\label{EP}
0=\phi''(z)-c\phi'{z}-\phi(z)+g(\phi(z-ch)). \end{eqnarray}
In order to study the front solutions of (\ref{E1}), (\ref{EP}),
different versions of the method of super- and sub- solutions were successfully applied 
in \cite{ma,sch,TPT,wz} (in the case of stationary equations similar to (\ref{EP})) and in 
\cite{CG1,MaZou,sch,STR,WLR} (in the case of non-stationary equations similar to (\ref{E1})).  
An efficacious construction of these solutions  is the key to the success of this approach. 
In particular,  the studies of  front's stability in \cite{MaZou,WLR} had used $C^3$-smooth super- and sub-solutions  previously introduced  by Chen and Guo in \cite[ Lemma 3.7]{CG1}. It is well known that, by cautiously  weakening smoothness restrictions, we can improve the overall quality of super- and sub- solutions, cf. \cite{FML,ma,roth,sch,TPT,WLR,wz}. In this paper, inspired by the latter references, we propose to work with somewhat more handy $C^1$-smooth super- and sub-solutions:  
\begin{definition}\label{def1}
Continuous function $w_+:  [-h, +\infty) \times \R \to \R$
is called a super-solution for (\ref{E1}), if, for some $z_* \in
\R$, this function  is $C^{1,2}$-smooth in the domains $[-h, +\infty) \times
(-\infty, z_*]$ and $[-h, +\infty) \times [z_*, +\infty)$ and, for every $t >0$,
\begin{equation}  \label{sso} \hspace{-0.5cm} {\mathcal N}w_+(t,z) \geq 0,  \ z \not= z_*,  \ \mbox{while} \
(w_+)_z(t,z_*-)> (w_+)_z(t,z_*+), 
\end{equation}
where the nonlinear operator ${\mathcal N}$ is defined by
\begin{equation} \hspace{-.0cm}
{\mathcal N}w(t,z):= w_{t}(t,z)
-w_{zz}(t,z)+cw_{z}(t,z)+w(t,z)-g(w(t-h, z-ch)). \nonumber
\end{equation}
The definition of  a  sub-solution $w_-$ is similar, with the  inequalities reversed in 
(\ref{sso}). 
\end{definition}
The following comparison result is a rather standard one. However, since  sub- and super-solutions considered in this paper have discontinuous spatial derivates and, in addition, equation (\ref{E1}) contains shifted arguments, we give its proof for the completeness of our exposition. See also \cite{FML,roth,sch,WLR}. 
\begin{lemma} \label{cl} Assume {\rm \bf(H)} and the monotonicity of $g$. Let $w_+, w_-$ be a pair of super- and sub-solutions for equation (\ref{E1}) such that  $|w_\pm(t,z)| \leq Ce^{D|z|}$, $t \geq -h, \ z \in \R$, for some $C, D >0$ as well as
$$
w_-(s,z) \leq w_0(s,z) \leq w_+(s,z), \quad \mbox{for all} \ s \in [-h,0], \ z \in \R.  
$$
Then the solution $w(s,z)$  of  equation (\ref{E1}) with the initial datum $w_0$  satisfies 
$$
w_-(t,z) \leq w(t,z) \leq w_+(t,z) \quad \mbox{for all} \ t \geq -h, \ z \in \R.  
$$
\end{lemma}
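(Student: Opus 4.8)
The plan is to establish the comparison principle via a standard but carefully executed argument combining a Phragm\`en--Lindel\"of-type barrier (to control the exponential growth at spatial infinity) with a maximum-principle argument on a bounded space-time box, proceeding step by step in $h$-length time intervals and using the monotonicity of $g$ together with its global Lipschitz property to close the induction. Throughout, set $v(t,z) = w_+(t,z) - w(t,z)$; it suffices to show $v \geq 0$ (the lower bound $w \geq w_-$ being symmetric), and by iterating over the intervals $[(k-1)h, kh]$ it suffices to prove $v \geq 0$ on $[0,h]\times\R$ assuming $v \geq 0$ on $[-h,0]\times\R$.

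First I would record the differential inequality satisfied by $v$. On each of the two regions $z < z_*$ and $z > z_*$, where both $w_+$ and $w$ are smooth, one has
\begin{equation*}
v_t - v_{zz} + c v_z + v \;\geq\; g(w_+(t-h,z-ch)) - g(w(t-h,z-ch)) \;\geq\; 0
\end{equation*}
for $t\in(0,h]$, using $\mathcal{N}w_+ \geq 0$, $\mathcal{N}w = 0$, the induction hypothesis $w_+(t-h,\cdot) \geq w(t-h,\cdot)$ on $[-h,0]$, and the monotonicity of $g$. At $z = z_*$ the super-solution has a downward corner, $(w_+)_z(t,z_*-) > (w_+)_z(t,z_*+)$, hence $v_z(t,z_*-) \geq v_z(t,z_*+)$; this is exactly the sign condition needed so that a nonnegative $v$ cannot develop an interior minimum with value zero at $z_*$ — any test against the region structure only helps us. The exponential bounds $|w_\pm| \leq Ce^{D|z|}$ together with the known a priori bounds on the classical solution $w$ (boundedness on finite time intervals from \cite{AF}) give $|v(t,z)| \leq C' e^{D'|z|}$ on $[0,h]\times\R$.

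Next I would run the Phragm\`en--Lindel\"of / maximum principle argument. Introduce the auxiliary function $\psi(t,z) = v(t,z) + \varepsilon e^{\beta t}\cosh(\mu z)$ for suitably large $\mu > D'$ and $\beta$ chosen so that $e^{\beta t}\cosh(\mu z)$ is a strict supersolution of the linear operator $\partial_t - \partial_{zz} + c\partial_z + 1$ (take $\beta > \mu^2 + c\mu$, say, noting the lower-order term $+v$ has a favorable sign); then $\mathcal{L}\psi = \mathcal{L}v + \varepsilon\mathcal{L}(e^{\beta t}\cosh\mu z) > \mathrm{(inhomogeneity} \geq 0)$ strictly, on each region $z \lessgtr z_*$. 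Because of the exponential growth bound on $v$, the function $\psi$ tends to $+\infty$ as $|z|\to\infty$ uniformly for $t\in[0,h]$, so on the slab $[0,h]\times\R$ the infimum of $\psi$ is attained at an interior point or on $\{t=0\}$. If it were attained at an interior point $(t_0,z_0)$ with $\psi(t_0,z_0) < 0$ (equivalently the infimum is negative), the standard parabolic minimum principle gives a contradiction when $z_0 \neq z_*$: at an interior minimum $\psi_t \leq 0$, $\psi_z = 0$, $\psi_{zz} \geq 0$, and $\psi < 0$ force $\mathcal{L}\psi \leq \psi < 0$, contradicting strict positivity. If $z_0 = z_*$, the corner condition $v_z(t_0,z_*-) \geq v_z(t_0,z_*+)$, i.e. $\psi_z(t_0,z_*-) \geq \psi_z(t_0,z_*+)$, is incompatible with $z_*$ being an interior minimum of the continuous function $\psi$ across the corner (a genuine minimum would require $\psi_z(t_0,z_*-) \leq 0 \leq \psi_z(t_0,z_*+)$, and both being zero returns us to the smooth case within one side). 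Hence $\inf_{[0,h]\times\R}\psi \geq \inf_{\{t=0\}}\psi = \inf_z (v(0,z) + \varepsilon\cosh\mu z) \geq 0$ since $v(0,\cdot) \geq 0$ by the induction hypothesis. Letting $\varepsilon \to 0^+$ yields $v \geq 0$ on $[0,h]\times\R$.

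Finally, I would iterate: having $w_- \leq w \leq w_+$ on $[-h,h]\times\R$, the same argument on $[h,2h]$ uses the now-established bound on $[0,h]$ as the new "initial" datum for the delayed term, and so on, covering $[-h,+\infty)$. The main obstacle — and the reason the authors write out the proof despite calling it "rather standard" — is handling the corner at $z = z_*$ simultaneously with the unbounded spatial domain: one must verify that the corner condition in Definition \ref{def1} has precisely the sign that prevents a spurious interior minimum there, and that the $\cosh$-barrier genuinely dominates the prescribed exponential growth of $w_\pm$; a secondary technical point is that $v$ is only $C^{1}$ (not $C^2$) across $z_*$, so the minimum principle must be applied separately on the two closed half-strips and glued using the $C^1$-matching plus the corner inequality, rather than invoked globally. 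The use of the favorable zeroth-order coefficient (the $+w$ term, reflecting $g'$ near equilibria but here only needing $g$ Lipschitz and monotone, extended linearly to $(-\infty,0]$) is what lets the barrier construction go through cleanly without smallness assumptions on $h$.
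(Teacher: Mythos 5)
Your proof is correct and follows essentially the same route as the paper's: the same differential inequality for the difference obtained from monotonicity of $g$ and step-by-step induction over intervals of length $h$, the same use of the corner inequality at $z_*$ to exclude an interior extremum there, and the same Phragm\`en--Lindel\"of argument to handle the unbounded spatial domain. The only cosmetic difference is that you construct the $\cosh$-barrier explicitly, whereas the paper delegates that step to the proof of Theorem 10, Chapter 3 of Protter--Weinberger after verifying the maximum principle on bounded rectangles.
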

\begin{proof} In view of the assumed conditions, we have that
$$
\pm(g(w_\pm(t-h,z-ch))- g(w(t-h,z-ch))) \geq 0, \quad t \in [0,h], \ z
\in \R.
$$
Therefore, for all  $t \in (0,h]$, the function $\delta(t,z):= \pm(w(t,z)- w_\pm(t,z))$
satisfies the inequalities
\begin{eqnarray}
& &  \hspace{-0cm} \delta(0,z) \leq 0,\ |\delta(t,z)| \leq 2Ce^{D|z|}, \quad
\delta_{zz}(t,z)- \delta_{t}(t,z)-c\delta_{z}(t,z)-\delta (t,z) = \nonumber \\
& &  \hspace{-0cm} \pm ({\mathcal N}w_\pm(t,z) - {\mathcal N}w(t,z) +
g(w_\pm(t-h, z-ch)) -
 g(w(t-h, z-ch))) =\nonumber  \\  & &  \hspace{-0cm} \pm{\mathcal N}w_\pm(t,z)\pm (g(w_\pm(t-h, z-ch)) -
 g(w(t-h, z-ch)))
   \geq 0,  \  z \in \R\setminus\{z_*\}; \nonumber 
 \end{eqnarray}   
 \begin{equation}
\frac{\partial \delta(t,z_*+)}{\partial z}- \frac{\partial \delta(t,z_*-)}{\partial z}=  \pm\left(\frac{\partial w_\pm(t,z_*-)}{\partial z} -\frac{\partial w_\pm(t,z_*+)}{\partial z}\right) > 0. \label{di}
\end{equation}
We claim that $\delta(t,z) \leq 0$ for all $t \in [0,h], \ z \in
\R$. Indeed, otherwise there exists $r_0> 0$ such that
$\delta(t,z)$ restricted to any rectangle $\Pi_r= [-r,r]\times
[0,h]$ with $r>r_0$,   reaches its maximal positive value $M_r >0$
at  at some point $(t',z') \in \Pi_r$.

We claim  that $(t',z')$ belongs to the parabolic boundary
$\partial \Pi_r$ of $\Pi_r$. Indeed, suppose on the contrary, that
$\delta(t,z)$ reaches its maximal positive value at some point
$(t',z')$ of $\Pi_r\setminus \partial \Pi_r$. Then clearly $z'
\not=z_*$ because of (\ref{di}). Suppose, for instance that $z' >
z_*$. Then $\delta(t,z)$ considered on the subrectangle $\Pi=
[z_*,r]\times [0,h]$ reaches its maximal positive value $M_r$ at the
point $(t',z') \in \Pi \setminus \partial \Pi$.  Then the
classical results \cite[Chapter 3, Theorems 5,7]{PW} show that
$\delta(t,z) \equiv M_r >0$ in $\Pi$, a contradiction.

Hence, the usual maximum principle holds for each $\Pi_r, \ r \geq
r_0,$ so that we can appeal to the proof of the
Phragm\`en-Lindel\"of principle from \cite{PW} (see Theorem 10 in
Chapter 3 of this book), in order to conclude that  $\delta(t,z)
\leq 0$ for all  $t \in [0,h], \ z \in \R$.

But then we can again repeat the above argument on the intervals $[h,2h],$ $[2h, 3h], \dots$ establishing that the inequality $w_-(t,z) \leq w(t,z)\leq w_+(t,z),$  $z\in \R,$ holds for all $t \geq -h$.  
\qed
\end{proof}
To the best of our
knowledge, the following important  property of super- (sub-) solutions was first used by  Aronson and  Weinberger in \cite{AW}. See also \cite[Proposition 2.9]{sch}.
\begin{corollary} \label{cod1} Assume {\rm \bf(H)} and the monotonicity of $g$. Let $w_+(z)$ be an exponentially bounded super-solution for equation (\ref{E1}) and consider the solution $w^+(t,z), t \geq 0,$ of the initial value problem  $w^+(s,z)= w_+(z)$ for (\ref{E1}). 
Then $w^+(t_1,z) \geq w^+(t_2,z)$  for each $t_1 \leq t_2,$ $z \in \R$. A similar result is valid in the case of  exponentially bounded sub-solutions  $w_-(z)$ which do not depend on  $t$: if $w^-(t,z)$ solves the initial value problem  $w^-(s,z)= w_-(z)$ for (\ref{E1}), then 
 $w^-(t_1,z) \leq w^-(t_2,z)$  for each $t_1 \leq t_2,$ $z \in \R$.
\end{corollary}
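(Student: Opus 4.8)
The plan is to reduce the statement to the comparison principle of Lemma~\ref{cl}, exploiting the fact that equation (\ref{E1}) is autonomous, i.e. invariant under translations in the time variable. Throughout, let $w^{+}(t,z)$ denote the solution of (\ref{E1}) issued from the $t$-independent initial datum $w^{+}(s,z)=w_{+}(z)$, $s\in[-h,0]$; by \cite{AF} it is the unique classical solution and it is bounded on every finite strip $[-h,T]\times\R$.

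The first step I would carry out is to show that $w^{+}$ never rises above its own initial profile, $w^{+}(t,z)\le w_{+}(z)$ for all $t\ge -h$, $z\in\R$. Indeed, since $w_{+}$ does not depend on $t$, the function $W(t,z):=w_{+}(z)$, $t\ge -h$, is a super-solution of (\ref{E1}) in the sense of Definition~\ref{def1}: with $W_{t}\equiv 0$, the inequality $\mathcal{N}W\ge 0$ off the line $z=z_{*}$ and the corner condition at $z_{*}$ are precisely the defining properties of the super-solution $w_{+}(z)$. Comparing $w^{+}$ with $W$ — that is, running the upper half of the argument in the proof of Lemma~\ref{cl} — yields $w^{+}(t,z)\le w_{+}(z)$ for all $t\ge -h$. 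Since $w_{+}\ge 0$ (as is automatic for the super-solutions used here) and $g(0)=0$ by {\rm \bf(H)}, comparison with the constant solution $0$ of (\ref{E1}) also gives $w^{+}\ge 0$; consequently $w^{+}$ is \emph{uniformly} exponentially bounded, $0\le w^{+}(t,z)\le w_{+}(z)\le Ce^{D|z|}$.

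The second step is a time shift. Fix $\tau\ge 0$ and set $v(t,z):=w^{+}(t+\tau,z)$. By the autonomy of (\ref{E1}), $v$ is again a solution of (\ref{E1}) on $(0,+\infty)\times\R$, and on the initial strip its datum is $v(s,z)=w^{+}(s+\tau,z)$, $s\in[-h,0]$. As $s+\tau\ge -h$ there, Step~1 gives $v(s,z)=w^{+}(s+\tau,z)\le w_{+}(z)=w^{+}(s,z)$ for all $s\in[-h,0]$, $z\in\R$. Thus $v$ and $w^{+}$ are two uniformly exponentially bounded solutions of (\ref{E1}) whose initial data are ordered, $v\le w^{+}$ on $[-h,0]\times\R$, and the comparison principle — here the special case of Lemma~\ref{cl} in which both functions are genuine solutions, so that no discontinuity line $z_{*}$ intervenes — gives $v(t,z)\le w^{+}(t,z)$ for all $t\ge -h$. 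That is, $w^{+}(t+\tau,z)\le w^{+}(t,z)$ for every $\tau\ge 0$; choosing $\tau=t_{2}-t_{1}$ for given $t_{1}\le t_{2}$ produces $w^{+}(t_{2},z)\le w^{+}(t_{1},z)$, which is the assertion. The statement about an exponentially bounded, $t$-independent sub-solution $w_{-}(z)$ is proved symmetrically: now $W(t,z):=w_{-}(z)$ is a sub-solution, Step~1 becomes $w^{-}(t,z)\ge w_{-}(z)$, and the shifted solution $w^{-}(t+\tau,z)$ lies \emph{above} $w^{-}(t,z)$, so $t\mapsto w^{-}(t,z)$ is nondecreasing.

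I expect the only real obstacle to be the legitimacy of the comparison step for merely exponentially bounded solutions of the delay equation (\ref{E1}); but this is exactly the Phragm\`en-Lindel\"of argument already executed in the proof of Lemma~\ref{cl}, applied successively on the intervals $[kh,(k+1)h]$, $k=0,1,2,\dots$, and once Step~1 is in place the relevant bounds are uniform in $t$, so nothing further is required. A second, minor point is to keep track of the delay when transferring the ordering of the data from $[-h,0]$ to all $t\ge -h$ in Step~2 — which is precisely why Step~1 must be stated on the whole half-line $t\ge -h$ rather than merely at $t=0$.
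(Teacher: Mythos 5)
Your proposal is correct and follows essentially the same route as the paper: first use the comparison principle of Lemma \ref{cl} to get $w^{+}(t,z)\le w_{+}(z)$ for all $t\ge 0$, then exploit the autonomy of (\ref{E1}) by comparing the time-shifted solution $w^{+}(t+\tau,z)$ with $w^{+}(t,z)$, whose initial data are ordered. The additional remarks on nonnegativity, uniform exponential bounds, and the absence of a corner line when comparing two genuine solutions are harmless elaborations of what the paper leaves implicit.
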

\begin{proof} We prove only the first statement of the corollary (for super-solution  $w_+$), the case of sub-solution $w_-(z)$ being completely analogous.  

By Lemma \ref{cl}, $w^+(t,z) \leq w_+(z)$ for each $t\geq 0$. Hence, fixing some positive $l$ and considering the initial value problems 
$
u(s,z) = w^+(s+l,z),  \ v(s,z) = w_+(z),$ $s \in [-h,0], \ z \in \R, 
$
for equation (\ref{E1}), 
we find that $u(t,z)= w^+(t+l,z) \leq v(t,z) = w^+(t,z)$, $t > 0, \ z \in \R$.   \qed
\end{proof}
\section{Proof of Theorem \ref{Thm2} and Corollary \ref{Cor1}}\label{sub3}

In this section, we take some $c\geq c_\#$ and assume the conditions of Theorem \ref{Thm2}. This result will follow from Theorem \ref{Te3} proved below. Everywhere in the section we denote by $w(t,z)$ solution  of equation (\ref{E1}) satisfying the initial value condition $w(s,z) = w_0(s,z),$ $(s,z) \in \Pi_0$. 

It is easy to see that, 
given $q^* >0,\ q_* \in (0,\kappa)$, there are $ \delta^* < \delta_0$, $\gamma^* >0$  such that
\begin{eqnarray} \label{gg1}
\begin{array}{ll}
g(u)- g(u- qe^{\gamma h}) \leq q(1-2\gamma), \\
(u,q,\gamma)\in \Pi_-= [\kappa-\delta^*,\kappa]\times[0,q_*] \times
 [0,\gamma^*];
\end{array}
\end{eqnarray} 
\begin{eqnarray} \label{gg}
\begin{array}{ll}
g(u)- g(u+ qe^{\gamma h}) \geq - q(1-2\gamma), \\
(u,q,\gamma)\in \Pi_+= [\kappa-\delta^*,\kappa]\times[0,q^*] \times
 [0,\gamma^*].
\end{array}
\end{eqnarray} 
Indeed, it suffices to note that
the continuous functions
\begin{eqnarray*} \hspace{-0.1cm}&&  G_-(u,q, \gamma):=\left\{
\begin{array}{ll} 1+(
g(u- e^{\gamma h}q)-& g(u))/q, \  (u,q,\gamma)\in \Pi_-; \\
    1- e^{\gamma h }g'(u), & {u\in[\kappa-\delta^*,\kappa], \ q =0},\ \gamma \in [0,\gamma^*],
\end{array}%
\right.
\end{eqnarray*}
\begin{eqnarray*} \hspace{-0.1cm}&&  G_+(u,q, \gamma):=\left\{
\begin{array}{ll} 1-(
g(u+e^{\gamma h}q)-& g(u))/q, \  (u,q,\gamma)\in \Pi_+; \\
    1- e^{\gamma h }g'(u), & {u\in[\kappa-\delta^*,\kappa], \ q =0},\ \gamma \in [0,\gamma^*],
\end{array}%
\right.
\end{eqnarray*}
are positive on $\Pi_\pm$ provided that $\gamma^*, \delta^*$   are
sufficiently small. 

From now on, we fix  $\gamma \in [0, \gamma_*), $ $\delta \in (0,\delta^*)$ such that 
(\ref{gg1}) and (\ref{gg}) hold and $$-\gamma+c\lambda-\lambda^{2}+1-g'(0)e^{\gamma
h}e^{-\lambda ch}\geq 0.$$ It is easy to see that  $\gamma =0$ for
$\lambda = \lambda_1(c)$ while $\gamma$ can be chosen positive if 
$\lambda \in (\lambda_1(c), \lambda_2(c))$. Consider  $b$ determined by the equation 
$\phi(b-ch)=\kappa-\delta^*/2$. Without loss of generality we can assume that $b>0$. 
\begin{lemma}\label{Sttg}
Suppose that  $L_g=g'(0)$ in $(\textbf{H})$. Let $\gamma \geq 0$ be as defined above. If either $c>c_\#$ with $\lambda \in (\lambda_1(c), \lambda_2(c))$ or $c\geq c_\#$ with $\lambda = \lambda_1(c)$, then 
\begin{equation*}\label{mlem}w_0(s,z)\leq \phi(z)+q \eta_\lambda(z-b), \quad z\in\mathbb{R},\quad s\in [-h,0],
\end{equation*}
with $q \in (0,q^*]$ implies
 \begin{equation*}\label{mlemm}w(t,z)\leq \phi(z)+qe^{-\gamma t}
\eta_\lambda(z-b), \quad z\in\mathbb{R},\quad t\geq -h.
\end{equation*}
Similarly, the inequality
\begin{equation*}\label{mlemN}\phi(z)-q \eta_\lambda(z-b) \leq w_0(s,z),  \quad z\in\mathbb{R},\quad s\in [-h,0],
\end{equation*}
with some $0< q \leq q_*$ implies
 \begin{equation*}\label{mlemmN}
 \phi(z)-qe^{-\gamma t} \eta_\lambda(z-b) \leq w(t,z),
\quad z\in\mathbb{R},\quad t\geq -h.
\end{equation*}
Each conclusion of the lemma  holds  without any upper restriction on the size of $q$ if we replace $\eta_\lambda(z-b)$ with $\xi(z, \lambda) = \exp{(\lambda z)}$. 
\end{lemma}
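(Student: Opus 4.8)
The plan is to construct explicit $C^1$-smooth super- and sub-solutions of the form $w_+(t,z) = \phi(z) + q e^{-\gamma t}\eta_\lambda(z-b)$ and $w_-(t,z) = \phi(z) - q e^{-\gamma t}\eta_\lambda(z-b)$, and then invoke the comparison principle (Lemma \ref{cl}) together with the exponential-boundedness hypothesis there. Set $z_* = b$, the breakpoint where $\eta_\lambda(z-b) = \min\{e^{\lambda(z-b)},1\}$ switches from the exponential branch ($z \le b$) to the constant branch ($z \ge b$). At $z = b$ the one-sided derivatives satisfy $(w_+)_z(t,b-) = \phi'(b) + \lambda q e^{-\gamma t} > \phi'(b) = (w_+)_z(t,b+)$, so the corner condition in Definition \ref{def1} holds (and the reversed inequality holds for $w_-$). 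So the heart of the matter is to verify ${\mathcal N}w_+(t,z) \ge 0$ for $z \ne b$, and likewise ${\mathcal N}w_-(t,z) \le 0$.

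First I would treat the region $z \ge b$, i.e. $\eta_\lambda(z-b) \equiv 1$. There $w_+(t,z) = \phi(z) + qe^{-\gamma t}$. Using that $\phi$ solves the stationary equation (\ref{EP}), one computes
\[
{\mathcal N}w_+(t,z) = -\gamma q e^{-\gamma t} + q e^{-\gamma t} - \bigl(g(\phi(z-ch)+qe^{-\gamma(t-h)}) - g(\phi(z-ch))\bigr).
\]
Because $b$ was chosen so that $\phi(b-ch) = \kappa - \delta^*/2$ and $\phi$ is increasing, for $z \ge b$ we have $\phi(z-ch) \in [\kappa-\delta^*/2,\kappa]$, hence the triple $(\phi(z-ch), q e^{-\gamma(t-h)}, \gamma)$ lies in $\Pi_+$ (after shrinking $q^*$ if needed so that $q e^{\gamma h} \le q^*$; here I use $e^{-\gamma t}e^{\gamma h} \le e^{\gamma h}$ only for $t \ge -h$, and for the small-$q$ constraint I track the worst case $t=-h$). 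Then estimate (\ref{gg}) gives $g(\phi+qe^{-\gamma(t-h)}) - g(\phi) \le q e^{-\gamma(t-h)}(1-2\gamma) \le q e^{-\gamma t}(1-2\gamma)$ — wait, I must be careful with the sign of $e^{\gamma h}$; the clean way is to note $q e^{-\gamma(t-h)} = (q e^{\gamma h}) e^{-\gamma t}$ and apply (\ref{gg}) with the reparametrised amplitude, obtaining ${\mathcal N}w_+ \ge q e^{-\gamma t}(-\gamma + 1 - (1-2\gamma)) = \gamma q e^{-\gamma t} \ge 0$. The sub-solution estimate for $z \ge b$ is symmetric, using (\ref{gg1}) and the constraint $q \le q_*$.

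Next I would handle $z \le b$, where $\eta_\lambda(z-b) = e^{\lambda(z-b)}$, so $w_+(t,z) = \phi(z) + q e^{-\gamma t} e^{\lambda(z-b)}$. Writing $v(t,z) = q e^{-\gamma t}e^{\lambda(z-b)}$, one gets ${\mathcal L}v := v_t - v_{zz} + c v_z + v = v(-\gamma - \lambda^2 + c\lambda + 1)$, and subtracting (\ref{EP}),
\[
{\mathcal N}w_+(t,z) = v(t,z)(-\gamma - \lambda^2 + c\lambda + 1) - \bigl(g(\phi(z-ch)+v(t-h,z-ch)) - g(\phi(z-ch))\bigr).
\]
Now the key is the monotone Lipschitz bound together with $L_g = g'(0)$: since $g$ is increasing with global Lipschitz constant $g'(0)$, $0 \le g(\phi + s) - g(\phi) \le g'(0)s$ for $s \ge 0$, hence the bracket is at most $g'(0)v(t-h,z-ch) = g'(0) v(t,z) e^{\gamma h} e^{-\lambda ch}$. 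This yields
\[
{\mathcal N}w_+(t,z) \ge v(t,z)\bigl(-\gamma - \lambda^2 + c\lambda + 1 - g'(0)e^{\gamma h}e^{-\lambda ch}\bigr) \ge 0
\]
by the very choice of $\gamma$ and $\lambda$ (with equality forced when $\lambda = \lambda_1(c)$, whence $\gamma = 0$, and strict slack available when $\lambda \in (\lambda_1(c),\lambda_2(c))$). Crucially, this half-line estimate uses only $0 \le s$ and the Lipschitz/monotonicity bound, so no smallness of $q$ is needed here — which is exactly why the final sentence of the lemma holds: replacing $\eta_\lambda(z-b)$ by $\xi(z,\lambda) = e^{\lambda z}$ simply extends the ``$z \le b$'' computation to all of $\R$ (with no corner, so the function is genuinely $C^{1,2}$), and then Lemma \ref{cl} applies with arbitrary $q > 0$.

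The main obstacle I anticipate is the bookkeeping at the corner $z = b$ and, more subtly, making sure the amplitude constraints $q e^{\gamma h} \le q^*$ (resp. $\le q_*$) coming from needing $(\phi(z-ch), v(t-h,z-ch), \gamma) \in \Pi_\pm$ are compatible with ``$q \in (0,q^*]$'' as stated — i.e. one should really fix $q^*$ and $q_*$ at the outset to already absorb the factor $e^{\gamma h}$, or shrink them; this is a routine adjustment but must be stated cleanly. A secondary point is that (\ref{gg})–(\ref{gg1}) are stated for $u$ in $[\kappa-\delta^*,\kappa]$, so one must confirm $\phi(z-ch) \in [\kappa-\delta^*,\kappa]$ throughout $z \ge b$ and also that in the regime $z \le b$ the argument $\phi(z-ch)$ may drop below $\kappa - \delta^*$ — but there we are not using $\Pi_\pm$ at all, only the global Lipschitz bound, so there is no conflict. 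Finally one feeds the constructed $w_\pm$ into Lemma \ref{cl}: the hypotheses $w_-(s,z) \le w_0(s,z) \le w_+(s,z)$ on $\Pi_0$ are exactly the assumed initial inequalities (note $e^{\gamma s} \ge e^{-\gamma h}$... actually at $s \in [-h,0]$ we have $e^{-\gamma s} \ge 1$, so $w_+(s,z) = \phi(z) + q e^{-\gamma s}\eta_\lambda(z-b) \ge \phi(z) + q\eta_\lambda(z-b) \ge w_0(s,z)$, consistent), and the exponential bound $|w_\pm(t,z)| \le C e^{D|z|}$ is immediate; the comparison conclusion is precisely the desired inequality.
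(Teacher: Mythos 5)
Your proposal is correct and follows essentially the same route as the paper: the same pair $w_\pm(t,z)=\phi(z)\pm qe^{-\gamma t}\eta_\lambda(z-b)$, the same corner check at $z=b$, the estimates (\ref{gg}), (\ref{gg1}) for $z>b$ and the Lipschitz bound $L_g=g'(0)$ on the exponential branch, followed by Lemma \ref{cl}. The only superfluous worry is the shrinking of $q^*$, $q_*$: inequalities (\ref{gg}), (\ref{gg1}) are already formulated with the increment $qe^{\gamma h}$ against the bound $q(1-2\gamma)$, so applying them with amplitude parameter $qe^{-\gamma t}\le q^\ast$ (valid for $t\ge 0$, which is all the super-solution condition requires) absorbs the factor $e^{\gamma h}$ with no adjustment.
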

\begin{proof} 
Set $w_{\pm}(t,z)=\phi(z)\pm qe^{-\gamma
t}\eta_\lambda(z-b)$. Then, for $t>0$ and $z\in\R\setminus\{b\}$, after a direct calculation we find that 
$$
\mathcal{N}w_{\pm}(t,z)= \pm qe^{-\gamma
t}[-\gamma\eta_\lambda(z-b)+c\eta'_\lambda(z-b)-\eta''_\lambda(z-b)+\eta_\lambda(z-b)]+$$
$$g(\phi(z-ch))-g(w_{\pm}(t-h,z-ch)). 
$$
It is clear that for $z< b$ (if we are considering  $\eta_\lambda(z-b)$) as well as for all $z\in \R$ (if  we are using $\xi(z,\lambda)$ instead of $\eta_\lambda(z-b)$),  it holds that
$$
\pm\mathcal{N}w_{\pm}(t,z)\geq  qe^{-\gamma
t}e^{\lambda(z-b)}[-\gamma+c\lambda-\lambda^{2}+1-g'(0)e^{\gamma
h}e^{-\lambda ch}]\geq 0.
$$
If $z> b$ and $q \in (0, q^*]$, then (\ref{gg})  implies 
\begin{eqnarray*}
\mathcal{N}w_{+}(t,z)&\geq & qe^{-\gamma
t}[-\gamma+1- (1-2\gamma)] = \gamma q e^{-\gamma
t} >0.  
\end{eqnarray*}
Similarly, 
if $z> b$ and $q \in (0, q_*]$, we obtain from (\ref{gg1}) that
\begin{eqnarray*}
-\mathcal{N}w_{-}(t,z)&\geq & qe^{-\gamma
t}[-\gamma+1- (1-2\gamma)] = \gamma q e^{-\gamma
t} >0.  
\end{eqnarray*}

Next, since $$\pm\left(\frac{\partial w_\pm(t,b+)}{\partial z}- \frac{\partial
w_\pm(t,b-)}{\partial z}\right)= -  q\lambda e^{-\gamma t} <0,$$ 
we conclude that $w_{\pm}(t,z)$ is a pair of super- and sub-solutions for equation (\ref{E1}). 
Finally, an application of Lemma \ref{cl} completes the proof.   \hfill $\square$
\end{proof}
Lemma \ref{Sttg} implies that front solutions of equation (\ref{e1}) are locally stable:
\begin{corollary} \label{simp} Let the triple $(c,\lambda, \gamma)\in [c_\#, +\infty)\times  [\lambda_1(c),  \lambda_2(c))\times \R_+$ be as in Lemma \ref{Sttg} and suppose that 
$$\sup_{s \in [-h,0]}|\phi(\cdot) -w_0(s,\cdot)|_{\lambda} < \rho e^{-\lambda b}$$ 
for some  $\rho< \kappa$. Then  
$$|\phi(\cdot) -w(t,\cdot)|_{\lambda} < \rho e^{-\gamma t}, \ t \geq 0.$$
\end{corollary}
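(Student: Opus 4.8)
The plan is to translate the weighted-norm hypothesis into the pointwise two-sided bounds to which Lemma \ref{Sttg} applies, to invoke that lemma in both of its directions, and to translate back; essentially all of the content is the bookkeeping with the three weights $\xi(z,\lambda)=e^{\lambda z}$, $\eta_\lambda(z)$ and $\eta_\lambda(z-b)$.

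First I would set $\sigma:=\sup_{s\in[-h,0]}|\phi(\cdot)-w_0(s,\cdot)|_\lambda$, so that $\sigma<\rho e^{-\lambda b}$ by hypothesis. (If $\sigma=0$ then $w_0(s,\cdot)\equiv\phi$ on $\Pi_0$, hence $w(t,z)\equiv\phi(z)$ by uniqueness of the solution of (\ref{E1}), and the conclusion is immediate; so assume $\sigma>0$.) By the definition of $|\cdot|_\lambda$ we have $|\phi(z)-w_0(s,z)|\le\sigma\,\eta_\lambda(z)$ for all $z\in\R$ and $s\in[-h,0]$. The one elementary computation needed is the pair of inequalities
$$e^{-\lambda b}\,\eta_\lambda(z)\ \le\ \eta_\lambda(z-b)\ \le\ \eta_\lambda(z),\qquad z\in\R,$$
which hold because $b>0$ and $\lambda>0$ (check the ranges $z\le 0$, $0<z<b$, $z\ge b$). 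Setting $q_0:=\sigma e^{\lambda b}$, so that $0<q_0<\rho<\kappa$, the left-hand inequality gives
$$\phi(z)-q_0\,\eta_\lambda(z-b)\ \le\ w_0(s,z)\ \le\ \phi(z)+q_0\,\eta_\lambda(z-b),\qquad z\in\R,\ s\in[-h,0].$$

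Next I would apply both implications of Lemma \ref{Sttg} with $q=q_0$. This is admissible: the only restriction there is $q_0\le q^*$ for the upper estimate and $q_0\le q_*$ for the lower one, and since $q_0<\rho<\kappa$ this holds once the constants in (\ref{gg1})--(\ref{gg}) have been fixed at the outset with $q^*\ge\kappa$ and $q_*\in(\rho,\kappa)$, which is possible because $\rho<\kappa$. Lemma \ref{Sttg} then yields $\phi(z)-q_0e^{-\gamma t}\eta_\lambda(z-b)\le w(t,z)\le\phi(z)+q_0e^{-\gamma t}\eta_\lambda(z-b)$ for all $z\in\R$, $t\ge-h$, i.e. $|\phi(z)-w(t,z)|\le q_0e^{-\gamma t}\eta_\lambda(z-b)$. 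Using $\eta_\lambda(z-b)\le\eta_\lambda(z)$ once more, dividing by $\eta_\lambda(z)$ and taking the supremum over $z\in\R$ gives $|\phi(\cdot)-w(t,\cdot)|_\lambda\le q_0e^{-\gamma t}<\rho e^{-\gamma t}$ for $t\ge 0$, which is the assertion; note the strict inequality is inherited directly from the strict hypothesis $\sigma<\rho e^{-\lambda b}$, so no extra shrinking argument is needed.

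There is no genuine obstacle here; what I would watch most carefully is keeping the three weights straight — in particular verifying $e^{-\lambda b}\eta_\lambda(z)\le\eta_\lambda(z-b)\le\eta_\lambda(z)$, which is exactly what lets one pass between the norm $|\cdot|_\lambda$ (built on $\eta_\lambda(z)$) and the perturbations $q\,\eta_\lambda(z-b)$ on which Lemma \ref{Sttg} is based — and checking that the size restriction $q\le q^*,q_*$ of that lemma is met, which is the one place where the assumption $\rho<\kappa$ is used.
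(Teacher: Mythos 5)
Your proof is correct and follows essentially the same route as the paper: both translate the weighted-norm hypothesis into the two-sided pointwise bounds $\phi(z)\pm q\,\eta_\lambda(z-b)$ via the inequality $e^{-\lambda b}\eta_\lambda(z)\le\eta_\lambda(z-b)\le\eta_\lambda(z)$ and then apply Lemma \ref{Sttg} in both directions (the paper takes $q=\rho$ directly, while your choice $q_0=\sigma e^{\lambda b}<\rho$ is a minor refinement that delivers the strict inequality more cleanly). Your explicit remarks on the admissibility of $q_0\le q_*,q^*$ — the one place where $\rho<\kappa$ enters — are consistent with how $q_*,q^*$ are fixed at the start of Section \ref{sub3}.
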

\begin{proof} The  statement of the corollary is an immediate consequence of  Lemma \ref{Sttg},  since, due to our assumptions,  for all $ z \in \R,\  s \in [-h,0]$, 
$$\phi(z) - \rho \eta_{\lambda}(z-b) \leq \phi(z) - \rho e^{-\lambda b}\eta_{\lambda}(z)   \leq w_0(s,z) \leq$$
$$ \phi(z) + \rho e^{-\lambda b}\eta_{\lambda}(z) \leq \phi(z) + \rho \eta_{\lambda}(z-b).\qquad \qquad \qquad {\qed}
$$
\end{proof}

We note that assumption $(IC1)$ allows consideration of initial functions $w_0$ which can be equal to $0$ on compact subsets 
of $\Pi_0$. This fact complicates the construction of adequate sub-solutions.  In the next assertion we show that, without restricting generality, the positivity of $w_0$ can assumed in  our proofs. 
\begin{corollary} \label{zd} Suppose that  $L_g=g'(0)$ in $(\textbf{H})$ and that $w_0(s,z),$ $(s,z) \in \Pi_0,$ satisfies the assumptions  (IC1), (IC2). Then the following holds.
\begin{itemize}
\item[A.]   If $c\geq  c_\#$ and 
\begin{equation}\label{AS3}
\lim_{z\to -\infty}w_0(s,z)/\phi(z)= 1,
\end{equation}
uniformly on $s \in [-h,0]$,  then $w(2h+s,z) >0, \ (s,z) \in \Pi_0$, also  satisfies the assumptions (IC1), (IC2)  and $\lim_{z\to -\infty}w(t,z)/\phi(z)= 1$
uniformly with respect to $t \in [0,+\infty)$.   
\item[B.]  Suppose that  $c> c_\#$, $\lambda \in (\lambda_1(c), \lambda_2(c))$   together with 
\begin{equation}\label{nev}
q_0:= \sup_{s \in [-h,0]}|\phi(\cdot) -w_0(s,\cdot)|_\lambda < \infty.
\end{equation}Then $w(2h+s,z) >0, \ (s,z) \in \Pi_0$, also  satisfies the assumptions (IC1), (IC2)  and, for each $t \geq 0$, 
 $$
\sup_{s \in [-h,0]}|\phi(\cdot) -w(s+t,\cdot)|_\lambda < \infty.
$$
\end{itemize}
\end{corollary}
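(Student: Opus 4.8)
The plan is to show that the shifted datum $v(s,z):=w(2h+s,z)$, $(s,z)\in\Pi_0$, inherits strict positivity, the bounds $(IC1)$ and $(IC2)$, and the relevant asymptotic information ((\ref{AS3}) in case A, (\ref{nev}) in case B), the last one \emph{uniformly in} $t$. I would rely on three ingredients already at hand: comparison with a constant stationary super-solution for $(IC1)$; the strong parabolic maximum principle (in the form used for Lemma \ref{cl}, cf.\ \cite{PW}) for the strict positivity; and Lemma \ref{Sttg} together with Corollary \ref{cod1} for the persistence of $(IC2)$ and of the asymptotics. Throughout, it is convenient to work with the mild (Duhamel) representation of $w$ via the strictly positive semigroup $S(t):=e^{t(\partial_{zz}-c\partial_z-1)}$, whose kernel has total mass $e^{-t}$.

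First, by Lemma \ref{cl} applied with the stationary sub-solution $w_-\equiv0$ (admissible since $g(0)=0$) and the stationary super-solution $w_+\equiv K:=\max\{|w_0|_\infty,\kappa\}$ (admissible because \textbf{(H)} forces $g(x)\le x$ for all $x\ge\kappa$), one gets $0\le w(t,z)\le K$ on $[-h,+\infty)\times\R$, which already yields $(IC1)$ for $v$. For the strict positivity I would use the maximum principle successively on $[0,h]$ and on $[h,2h]$: for $t\in(0,h]$ the function $w$ solves the linear inhomogeneous parabolic equation $w_t-w_{zz}+cw_z+w=g(w_0(t-h,z-ch))\ge0$ with non-negative initial value $w_0(0,\cdot)$, not identically zero by $(IC2)$, so dropping the non-negative Duhamel source gives $w(t,z)\ge[S(t)w_0(0,\cdot)](z)>0$ for all $z$; for $t\in(h,2h]$ the source $g(w(t-h,z-ch))$ is now everywhere positive (as $g$ is strictly increasing with $g(0)=0$), so again $w(t,z)>0$, i.e.\ $v>0$ on $\Pi_0$. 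For $(IC2)$, fix $\epsilon_0>0$ and $z_0$ with $w_0(s,z)\ge\epsilon_0$ for $z\ge z_0$, $s\in[-h,0]$; then $w(t,z)\ge[S(t)w_0(0,\cdot)](z)$ is at least $\epsilon_0$ times the mass of the kernel of $S(t)$ lying to the right of $z_0$, and since that mass tends to the full mass $e^{-t}$ as $z\to+\infty$, uniformly for $t$ in the compact set $[h,2h]$, we obtain $\liminf_{z\to+\infty}\min_{s\in[-h,0]}v(s,z)\ge\frac12\epsilon_0e^{-2h}>0$. (An alternative, avoiding the semigroup estimate, is to build a time-independent sub-solution of (\ref{E1}) equal to $0$ on a left half-line and climbing to a small plateau $\epsilon_1$ with $g(\epsilon_1)\ge\epsilon_1$ on a right half-line — gluing $0$ to the climbing part at a corner being admissible in the sense of Definition \ref{def1} — and then to invoke Lemma \ref{cl} and Corollary \ref{cod1}.)

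For the asymptotic data, case B is immediate: (\ref{nev}) says $|\phi(z)-w_0(s,z)|\le q_0\eta_\lambda(z)\le q_0e^{\lambda z}$ for all $z,s$, so applying Lemma \ref{Sttg} in its unrestricted form (with $\xi(z,\lambda)=e^{\lambda z}$ replacing $\eta_\lambda(z-b)$) to the super- and to the sub-solution about $\phi$ gives $|\phi(z)-w(t,z)|\le q_0e^{-\gamma t}e^{\lambda z}\le q_0e^{\lambda z}$ for all $t\ge-h$; combined with the crude bound $|\phi(z)-w(t,z)|\le K$ (controlling $z\ge0$, where $\eta_\lambda\equiv1$) this yields $\sup_{s\in[-h,0]}|\phi(\cdot)-w(s+t,\cdot)|_\lambda\le\max\{q_0,K\}<\infty$ for every $t\ge0$. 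In case A, from (\ref{AS3}) and the exponential decay of $\phi$ at $-\infty$ one first checks that $w_0$ is globally trapped between $\phi(z)\pm q\,\eta_{\lambda_1}(z-b)$ for a sufficiently large $q$, so that Lemma \ref{Sttg} (with $\gamma=0$) propagates to all $t\ge0$ a uniform bound of the type $w(t,z)\le C\phi(z)$ near $-\infty$; upgrading this to $w(t,z)/\phi(z)\to1$ then calls for an iteration of Lemma \ref{Sttg} with constants tuned to each prescribed $\epsilon>0$, the monotonicity in $t$ from Corollary \ref{cod1}, and a compactness argument in the spirit of \cite{AW,sch,STR} to absorb the parameter $t$. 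Once this is done, $v=w(2h+\cdot,\cdot)$ inherits (\ref{AS3}) together with the $(IC1)$, $(IC2)$ proved above, completing case A.

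The step I expect to be the main obstacle is precisely this last uniform-in-$t$ sharpening in case A. A single application of Lemma \ref{Sttg} at $\lambda=\lambda_1(c)$ carries no contraction factor ($\gamma=0$) and only gives $w(t,z)\le C\phi(z)$ with $C>1$ near $-\infty$; the tension is that dominating $w_0$ globally forces one to use a perturbation decaying no faster than $\phi$, whereas the sharp ratio $\to1$ needs a perturbation decaying \emph{strictly faster} than $\phi$, and bridging this gap is where the real work lies. In addition, the critical regime $c=c_\#$ — where $\phi$ carries an extra polynomial prefactor at $-\infty$ — has to be handled separately. Everything else reduces to the comparison and squeezing machinery already set up in Sections \ref{sub2}--\ref{sub3}.
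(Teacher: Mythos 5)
Your treatment of positivity, $(IC1)$, $(IC2)$ and of case B matches the paper's proof (the paper uses $w\equiv 0$ and $w\equiv\max\{\kappa,|w_0|_\infty\}$ as sub/super-solutions for $(IC1)$, refers to a Duhamel-type argument for $(IC2)$, and settles B by exactly the unrestricted-$q$ version of Lemma~\ref{Sttg} with $\xi(z,\lambda)$ that you invoke). The problem is case A, where you correctly diagnose the tension — a perturbation $q\,\eta_{\lambda_1}$ that is large enough to dominate $w_0$ globally decays no faster than $\phi$, so it only yields $w(t,z)\le C\phi(z)$ with $C>1$ near $-\infty$ — but you do not resolve it: ``an iteration of Lemma~\ref{Sttg} with constants tuned to each $\epsilon$, monotonicity from Corollary~\ref{cod1}, and a compactness argument'' is not a proof, and it is unclear how such an iteration would ever improve the constant $C$ at the level $\lambda=\lambda_1$, where $\gamma=0$ and nothing contracts. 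This is a genuine gap: the conclusion of part A is exactly the uniform-in-$t$ convergence of the ratio, and that is the one step left open.

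The missing idea is to get the slack near $-\infty$ from a \emph{spatial shift of the profile} rather than from the additive perturbation. Fix $\lambda_c\in(\lambda_1(c),\lambda_2(c))$ if $c>c_\#$ and $\lambda_c=\lambda_1$ if $c=c_\#$; in either case $\xi(z,\lambda_c)/\phi(z)\to 0$ as $z\to-\infty$ (for $c=c_\#$ because the critical profile carries a polynomial prefactor in front of $e^{\lambda_1 z}$). Since $\phi(z\pm\delta)/\phi(z)\to e^{\pm\lambda_1\delta}$ and $w_0(s,z)/\phi(z)\to 1$, for each small $\delta>0$ there is a large $q=q(\delta,w_0)$ with
$$
\phi(z-\delta)-q\,\xi(z,\lambda_c)\ \le\ w_0(s,z)\ \le\ \phi(z+\delta)+q\,\xi(z,\lambda_c),\qquad (s,z)\in\Pi_0:
$$
the shifts $\pm\delta$ absorb the discrepancy near $-\infty$ (a multiplicative margin $e^{\pm\lambda_1\delta}$), while the term $q\,\xi$, which is allowed to be arbitrarily large in the $\xi$-version of Lemma~\ref{Sttg}, takes care of the remaining region. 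A single application of Lemma~\ref{Sttg} then propagates these two-sided bounds to all $t\ge 0$, and dividing by $\phi(z)$ and letting $z\to-\infty$ gives $e^{-\lambda_1\delta}-1\le w(t,z)/\phi(z)-1\le e^{\lambda_1\delta}-1$ for all $t\ge0$ and all $z$ below some $z_\epsilon$. Since $\delta>0$ is arbitrary, this is the uniform-in-$t$ limit you need — no iteration, no compactness, and the critical case $c=c_\#$ requires no separate treatment beyond the choice $\lambda_c=\lambda_1$.
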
 
\begin{proof}  The positivity of $w(2h+s,z)$ for  $(s,z) \in \Pi_0,$ is obvious. Next, the fulfilment of separation condition $(IC2)$ for $w(2h+s,z)$ can be proved similarly to \cite[Proposition 1.2]{STR} (alternatively, the reader can use Duhamel's formula). Next, since $w\equiv 0$ and $w\equiv \max\{\kappa, |w_0|_\infty\}$ are, respectively,  sub- and super-solutions of equation (\ref{E1}), the condition $(IC1)$ is also fulfilled.  Finally, the proofs of the persistence  of properties (\ref{AS3}) and (\ref{nev}) are given below. 

\noindent A.    Set $\lambda_c= \lambda_1$ if $c=c_\#$ or fix some $\lambda_c \in (\lambda_1(c), \lambda_2(c))$ if $c > c_\#$. 
It follows from (\ref{AS3}) that for every $s \in \R$, it holds 
\begin{equation*}\label{AS}
\lim_{z\to -\infty}w_0(s,z)/\phi(z+s)= e^{\lambda_1s}
\end{equation*}
uniformly on $s \in [-h,0]$.  Therefore, 
 for each small $\delta >0$ there exists a  large $q= q(\delta,w_0)>0$ such that 
\begin{equation}\label{13p}
\phi(z-\delta) - q \xi(z, \lambda_c) \leq w_0(s,z) \leq   \phi(z+\delta) + q \xi(z, \lambda_c) , \ (s,z) \in \Pi_0. 
\end{equation}
Then Lemma \ref{Sttg} assures that  
$$
\phi(z-\delta) - q \xi(z, \lambda_c) \leq w(t,z) \leq   \phi(z+\delta) + q \xi(z, \lambda_c) , \ t \geq 0,  \ z \in \R,  
$$
so that, for all $t \geq 0$ and $z \in \R$, it holds 
$$
\frak{l}(z,\delta):= \frac{\phi(z-\delta)}{\phi(z)}-1 - q \frac{\xi(z, \lambda_c)}{\phi(z)} \leq \frac{w(t,z)}{\phi(z)} -1 \leq \frak{r}(z,\delta): =\frac{\phi(z+\delta)}{\phi(z)}-1 + q \frac{\xi(z, \lambda_c)}{\phi(z)}.
$$
Now, since 
$$
\lim_{z \to -\infty} \frak{l}(z,\delta)= e^{-\lambda_1\delta}-1,\quad  \lim_{z \to -\infty} \frak{r}(z,\delta)= e^{\lambda_1\delta}-1, 
$$
for each $\epsilon >0$ we can indicate $\delta = \delta(\epsilon)$ and $z_\epsilon$ such that 
$$-\epsilon  \leq \frac{w(t,z)}{\phi(z)} -1 \leq\epsilon \quad  \mbox{for all} \  t \geq 0 \ \mbox{ and} \ z \leq z_\epsilon.$$ 

\noindent B. We have that 
$$\phi(z)-q_0 \xi(z,\lambda) \leq w_0(s,z)\leq \phi(z)+q_0 \xi(z,\lambda),  \ (s,z) \in \Pi_0,
$$
so that the last conclusion of the corollary follows from Lemma \ref{Sttg}. 
\qed
\end{proof}
\begin{remark}
Corollary \ref{zd}A shows that asymptotic relation (\ref{AS3}) is a time invariant of $w(t,z)$. In the next section, Lemma \ref{Inv}  gives an amplified version of this result. 
\end{remark}
\begin{theorem}\label{Te3}
In addition to $(\textbf{H})$, suppose that  $g$ is stictly increasing and 
$L_g= g'(0)$. If the initial function $w_0$ satisfies the assumptions
(IC1), (IC2),  then   the solution $w(t,z)$ of the initial value problem 
$w(s,z) = w_0(s,z), (s,z) \in \Pi_0,$ for (\ref{E1}) 
satisfies the following conclusions.
\begin{itemize}
\item[A.]   Take  $c\geq  c_\#$ and assume (\ref{AS3}).   Then 
$$|w(t,\cdot)/\phi(\cdot)- 1|_0 =o(1), \ t \to +\infty.$$
\item[B.]  If  $c>c_\#$ and $\lambda \in (\lambda_1(c), \lambda_2(c))$ then (\ref{nev}) implies 
$$|\phi(\cdot) -w(t,\cdot)|_\lambda \leq Ce^{-\gamma t }, \ t \geq 0,$$
for some positive $C, \gamma$ (in fact, $\gamma >0$ can be chosen  as in Lemma \ref{Sttg}). 
\end{itemize} 
\end{theorem}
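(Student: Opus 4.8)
The plan is to deduce Theorem~\ref{Te3} from Lemma~\ref{Sttg} (in both its guises, with weights $\eta_\lambda(z-b)$ and $\xi(z,\lambda)$) and Corollary~\ref{zd}, the only genuinely new ingredient being a global attraction statement at the stable equilibrium $\kappa$. First, Corollary~\ref{zd} lets me assume from the outset that $w_0>0$ and that the relevant hypothesis persists along the trajectory: for part~B, $\sup_{s\in[-h,0]}|\phi-w(t+s,\cdot)|_\lambda<\infty$ for all $t\ge0$; for part~A, that (\ref{AS3}) holds for $w(t,\cdot)$ and moreover $w(t,z)/\phi(z)\to1$ as $z\to-\infty$ uniformly in $t\ge0$. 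Feeding these into the $\xi$-version of Lemma~\ref{Sttg} gives, in part~B, $|w(t,z)-\phi(z)|\le q_0e^{-\gamma t}e^{\lambda z}$ for all $(t,z)$ with $\gamma>0$ (recall $c>c_\#$), and, in part~A, the two-sided estimate $\phi(z-\delta)-q(\delta)e^{-\gamma t}e^{\lambda z}\le w(t,z)\le\phi(z+\delta)+q(\delta)e^{-\gamma t}e^{\lambda z}$ for every $\delta>0$, with $\gamma>0$ when $c>c_\#$ and $\gamma=0$ when $c=c_\#$. When $c>c_\#$ this already yields $w(t,\cdot)\to\phi$, exponentially fast, uniformly on each half-line $(-\infty,K]$ — dividing by $\phi$ on compact $z$-sets and letting $\delta\downarrow0$ disposes of part~A on those sets. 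In every case, the region that is not controlled this way is $z\to+\infty$.

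The core of the argument is therefore the near-$\kappa$ claim: for a fixed $B\in\R$, $\sup_{z\ge B}|w(t,z)-\phi(z)|\to0$ as $t\to+\infty$. Here the conditions $g'(\kappa)<1$ (so that $g(y)>y$ on $(0,\kappa)$) and (IC2) enter. From above, $w(t,z)\le\overline y(t)$, where $\overline y$ is the $z$-independent solution of the scalar delayed equation $\dot y=-y+g(y(t-h))$ started at $\max\{\kappa,|w_0|_\infty\}$, which decreases monotonically to $\kappa$. From below, for small $\rho>0$ I would use the $z$-independent sub-solution $\underline y_\rho(t)$ solving the truncated equation $\dot y=-y+g(\min\{y(t-h),\kappa-\rho\})$ with a small positive history — a genuine sub-solution of (\ref{E1}) because its nonlinearity lies below $g$ — whose limit $g(\kappa-\rho)$ lies in $(\kappa-\rho,\kappa)$. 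Comparing on a half-strip $\{t\ge\tau_0,\ z\ge B_0\}$, where the initial/history datum is supplied by (IC2) (which is preserved for all $t$) and the lateral datum at $z=B_0$ by the $\xi$-bound — which, since $c>c_\#$, makes $w(t,B_0)$ as close to $\phi(B_0)\approx\kappa$ as wished for $t$ large — the half-strip version of Lemma~\ref{cl} gives $w(t,z)\ge\underline y_\rho(t)$ there. Since $\overline y(t)$ and $\underline y_\rho(t)$ bracket $\kappa$ within $O(\rho)$ for large $t$, the claim follows. When $c=c_\#$ the $\xi$-bound has no time decay, so the lateral control at $z=B_0$ — i.e. the fact that $w$ enters a neighbourhood of $\kappa$ behind the front — must instead be borrowed from the attraction/spreading results of Section~\ref{Sec4} (Lemma~\ref{Inv} and its analogues) and \cite{AW,STR}, where $c_*=c_\#$ is the asymptotic speed of propagation.

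Granting the near-$\kappa$ claim, part~B closes by a two-stage bootstrap: by the claim and the $\xi$-bound on $(-\infty,B]$ there is a finite $T_1$ with $e^{\lambda b}\,\sup_s|\phi-w(T_1+s,\cdot)|_\lambda\le\min\{q_*,q^*\}$, and applying Lemma~\ref{Sttg} with the $\eta_\lambda(z-b)$ weight from time $T_1$ gives $|\phi-w(t,\cdot)|_\lambda\le Ce^{-\gamma t}$ for $t\ge T_1$ (the bounded interval $[0,T_1]$ being absorbed into $C$). Part~A with $c>c_\#$ follows by combining the two-sided $\xi$-estimate (divided by $\phi$, $\delta\downarrow0$) on compacts, Corollary~\ref{zd}A at $z\to-\infty$, and the near-$\kappa$ claim at $z\to+\infty$. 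The remaining case, part~A with $c=c_\#$, is the hard one: the $\xi$-estimate has no rate on the transition region, so I would pass to the $\omega$-limit — along $t_n\to+\infty$, parabolic regularity gives a $C^{1,2}_{\mathrm{loc}}$-limit $w_\infty$, an entire solution of (\ref{E1}) with $0\le w_\infty\le\kappa$, with $w_\infty(t,z)/\phi(z)\to1$ as $z\to-\infty$ uniformly in $t$ and $w_\infty(t,z)\to\kappa$ as $z\to+\infty$; a sliding-solutions argument of Berestycki--Nirenberg type, in the spirit of the wave-uniqueness proof of \cite{TPT}, forces $w_\infty$ to be a translate of $\phi$, and the left-hand normalization pins that translate to $\phi$ itself, whence $w(t,\cdot)\to\phi$ locally uniformly and, with the two uniform tail bounds, uniformly. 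The two genuine obstacles are thus the near-$\kappa$ claim in the presence of delay and drift, and — at the critical speed — the Liouville-type rigidity of entire solutions trapped between translates of the critical front.
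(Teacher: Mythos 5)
Your overall architecture (reduce to positive data via Corollary \ref{zd}, apply Lemma \ref{Sttg} with the $\xi$-weight, isolate the behaviour at $z\to+\infty$ as the real difficulty, then bootstrap to the $\eta_\lambda$-weight for part B) matches the paper, and your part B closing argument is essentially the paper's. The gap is concentrated at the critical speed $c=c_\#$, which the theorem explicitly covers in part A. First, for the near-$\kappa$ claim you supply the lateral datum at $z=B_0$ from the $\xi$-bound, which indeed works only for $c>c_\#$; for $c=c_\#$ you propose to ``borrow'' it from Lemma \ref{Inv} and spreading results, but Lemma \ref{Inv} concerns the invariance of the exponential asymptotics at $-\infty$ and gives no information whatsoever about $w$ entering a neighbourhood of $\kappa$ behind the front. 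Second, your treatment of part A at $c=c_\#$ rests on an asserted ``Liouville-type rigidity of entire solutions trapped between translates of the critical front''. That is a substantially stronger statement than the profile uniqueness of \cite{TPT}: the sliding-solutions method there classifies solutions of the \emph{stationary} equation (\ref{EP}), not entire solutions of the parabolic equation (\ref{E1}), and nothing in the paper or in the cited references delivers such a classification. As stated, both steps are unproven.

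The idea you are missing — and which lets the paper handle $c\ge c_\#$ uniformly, with no rate and no Liouville theorem — is to bracket $w$ between solutions $w^\pm(t,z)$ of (\ref{E1}) issued from \emph{time-independent, bounded} super- and sub-solutions: $w_+(z)=\min\{\kappa+|w_0|_\infty,\ \phi(z+\delta)+q\,\xi(z,\lambda_c)\}$ and a truncation of $\phi(z-\delta)-q\,\xi(z,\lambda_c)$ at a small positive level $\epsilon$ (admissible by (IC2)). By Corollary \ref{cod1} these $w^\pm(t,z)$ are monotone in $t$, hence converge on compacta to solutions of the profile equation (\ref{EP}) sandwiched between $w_-$ and $w_+$, which by \cite[Proposition 2 and Theorem 1.2]{TPT} must be translates $\phi(z\pm\delta_\pm)$ with $|\delta_\pm|\le\delta$. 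The near-$\kappa$ claim is then proved for the monotone bracketing solution $w^-$ (whose convergence on compacta already supplies the lateral control on $[Z-ch,Z]$ that you lacked), by comparison with homogeneous solutions of the delayed ODE and the Phragm\`en--Lindel\"of principle, exactly in the spirit of your $\underline y_\rho$ construction. Squeezing $w$ between $w^-$ and $w^+$ and letting $\delta\downarrow0$ finishes part A for every $c\ge c_\#$ without ever needing a decay rate or an $\omega$-limit classification.
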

\begin{proof}  In virtue of Corollary \ref{zd}, without loss of generality, we can assume that $w_0(s,z)>0$ on $\Pi_0$.  

\vspace{2mm}

\noindent A.  As in the proof of Corollary \ref{zd}A,  set $\lambda_c= \lambda_1$ if $c=c_\#$ or take some $\lambda_c \in (\lambda_1(c), \lambda_2(c))$ if $c > c_\#$. 
We know from Lemma \ref{Sttg} that the functions $\phi(z)\pm q \xi(z, \lambda_c)$ constitute  a pair of super- and sub-solutions for equation (\ref{E1}) for each positive $q$. The main  drawback of these solutions is their  unboundedness.  Hence, first we show how to correct this deficiency  of   $\phi(z)\pm q \xi(z, \lambda_c)$. 

So, fix $\delta >0$ and take $q= q(\delta,w_0)>0$ large enough to meet (\ref{13p}). 
Let  $(-\infty, p)$ be  the maximal interval where the function $\phi(z-\delta) - q \xi(z, \lambda_c)$ is positive. Then,  for sufficiently small $\epsilon \in (0, \kappa)$, the equation 
$$
\phi(z-\delta) - q \xi(z, \lambda_c) = \epsilon 
$$
has exactly two solutions $z_1(\epsilon) <z_2(\epsilon)$ on $(-\infty, p)$.  It holds that 
$z_1(0+)=-\infty,$ $z_2(0+) = p$ and therefore we can find $\epsilon>0$ such that $z_2(\epsilon) -z_1(\epsilon) >ch$ and 
$$
\inf \{w_0(s,z): z \geq z_1(\epsilon), \ s \in [-h,0]\} > \epsilon. 
$$
It is easy to see that the functions
$$
w_-(z):=\left\{\begin{array}{ll}
\phi(z-\delta) - q \xi(z, \lambda_c),& z\leq z_2(\epsilon), \\ \epsilon, & z_2(\epsilon) \leq z, \end{array}\right.$$
and 
$$w_+(z):=\min\{\kappa + |w_0|_\infty,  \phi(z+\delta) + q \xi(z, \lambda_c)\}$$  
satisfy 
$$
w_-(z)  \leq w_0(s,z)  \leq w_+(z), \ (s,z) \in \Pi_0, 
$$
and that  they are, respectively,  a sub-solution and a super-solution  for equation (\ref{E1}). 
 Thus Corollary \ref{cod1} implies that 
\begin{equation}\label{vrt}
w_-(z) \leq w^-(t,z) \leq w(t,z) \leq w^+(t,z) \leq w_+(z), 
\end{equation}
where $w^\pm(t,z)$ denote the solutions of (\ref{E1}) satisfying the initial conditions $w^\pm(s,z) =w_{\pm}(z),$ $z\in \R$, $s \in [-h,0]$.  From  Corollary \ref{cod1} we also obtain that 
$w^\pm(t,z)$ converge (uniformly on compact subsets of $\R$) to some functions $\phi^\pm(z)$ such that
$$
w^-(z) \leq \phi^-(z) \leq \phi^+(z) \leq w^+(z). 
$$
It is well known (see e.g. \cite[Lemma 2.8]{STR}) that $\phi^\pm$ satisfy the profile equation (\ref{EP}). Since 
$\phi^\pm$ are positive and  bounded, $\phi(-\infty) =0$ and $\liminf_{z\to +\infty}\phi(z) >0$, we conclude from \cite[Proposition 2 and Theorem 1.2]{TPT} that 
$\phi^\pm(z)= \phi(z\pm \delta_\pm),$ $z \in \R$ for some $-\delta \leq \delta_-\leq \delta_+ \leq \delta$. 

Furthermore, we claim that 
$$
w^*:= \limsup_{t\to +\infty}|w^+(t,\cdot)|_\infty \leq \kappa, \quad w_*:= \lim_{(T,Z)\to +\infty} \ \inf_{z \geq Z, t \geq T} w^-(t,z) = \kappa. 
$$
Clearly, $w_* \leq w^*$. To prove that $w^* \leq \kappa$, it suffices to observe that the homogeneous solution $w_g(t), \ t \geq 0,$ of equation (\ref{E1})  defined as the solution of the initial value problem 
$$
w'(t) = - w(t) + g(w(t-ch)), \ w_g(s) = |w_0|_\infty + \kappa, \ s \in [-h,0], 
$$
dominates  $w^+$ (i.e. $w^+(t,z) \leq w_g(t)$ for all $z \in \R$, $t \geq -h$) in view of Lemma \ref{cl} and converges  to $\kappa$.

Next, suppose that $w_* < \kappa$ and take $Z,T$ so large and $\delta_1 >\varepsilon_1 >0$ so small that 

(i) $w^-(t,z) >  w_* - \delta_1$ for all $z \geq Z-ch,$ \ $t \geq T-h$; 

(ii)  $w^-(t,z) > \kappa-\varepsilon_1$ for all 
$t\geq T-h,$ and $z \in [Z-ch,Z]$;

(iii) homogeneous solution $w_h(t), \ t \geq 0,$ of equation (\ref{E1})  defined as the solution of the initial value problem 
$$
w'(t) = - w(t) + g(w(t-ch)), \ w_h(s) = w_* -\delta_1, \ s \in [-h,0], 
$$
satisfies the inequalities 
\begin{eqnarray}
& &  \hspace{-0cm} w_h(t) \leq  (w_*+\kappa)/2 , \ t \in [-h, a_1], \nonumber \\
& &  \hspace{-0cm} 
(w_*+\kappa)/2 \leq w_h(t \leq  w_h(a_2) =\kappa - \varepsilon_1, \ t \in [a_1, a_2], \nonumber \end{eqnarray}
for sufficiently large $a_2 > a_1+h >h$
(observe here that from \cite[Corollary 2.2, p. 82]{HS} we know that $w_h(t)$ converges monotonically  to $\kappa$).
Therefore, for each $T_1 \geq T$ and all  $t \in (T_1,T_1+h]$, $z \geq Z$, the function  $\delta(t,z) = w_h(t-T_1) - w^-(t,z) $ satisfies the inequalities
\begin{eqnarray}
& &  \hspace{-0cm} |\delta(t,z)| \leq \kappa, \quad
\delta_{zz}(t,z)- \delta_{t}(t,z)-c\delta_{z}(t,z)-\delta (t,z) = \nonumber \\
& &  \hspace{-0cm} 
g(w^-(t-h, z-ch)) -
 g(w_h(t-T_1-h)) >0.\nonumber \end{eqnarray}
 In addition, we have that 
 $$
 \delta(T_1,z) < 0, z \geq Z, \quad  \delta(t,Z) < 0, \ t \in [T_1, T_1+a_2].
 $$
In consequence, by the Phragm\`en-Lindel\"of principle, $$\delta(t,z) = w_h(t-T_1) - w^-(t,z)\leq 0, \ \mbox{for all} \ 
t \in [T_1,T_1+h], \ z \geq Z.$$  It is clear that, using step by step integration method,  we can repeat the above procedure till the maximal moment $t_*$ before which the inequality 
$g(w^-(t-h, z-ch))\geq g(w_h(t-T_1-h))$ for $z \geq Z$ is preserved. Therefore 
$$ w_h(t-T_1) \leq  w^-(t,z) \ \mbox{for all} \ 
t \in [T_1,T_1+a_2], \ z \geq Z,$$
so that  
$$(w_*+\kappa)/2 \leq  w^-(t,z) \ \mbox{for all} \ 
t \in [T_1+a_1,T_1+a_2], \ z \geq Z.$$
However, since $T_1\geq T$ is an arbitrarily chosen number, we conclude that $w_*\geq (w_*+\kappa)/2$, contradicting to our initial assumption that $w_*<\kappa$. 
Hence 
$w^\pm(t,z) \to \phi^\pm(z)$ as $t \to +\infty$ uniformly on  $\R$.   
In virtue of (\ref{vrt}), we obtain 
$$\limsup_{t \to +\infty}|w(t,\cdot)/\phi(\cdot)- 1|_0 \leq e^{\lambda_1 \delta} -1, $$
for each small $\delta$. This  completes the proof of the first part  of  Theorem \ref{Te3}.  
\vspace{2mm}

\noindent B.  We deduce  from (\ref{nev}) that 
$$
\phi(z) - q_0e^{\lambda b} \xi(z-b,\lambda)   \leq w_0(s,z) \leq \phi(z) + q_0e^{\lambda b}\xi(z-b, \lambda), \quad z \in \R,\  s \in [-h,0]. 
$$
 As a consequence, Lemma \ref{Sttg} guarantees  that, for some positive $\gamma$ and all $z \in \R,$ $ t \geq  -h,$
$$\phi(z) - q_0 e^{\lambda b} e^{-\gamma t}\xi(z-b,\lambda)   \leq w(t,z) \leq \phi(z) + q_0e^{\lambda b} e^{-\gamma t} \xi(z-b, \lambda). $$
From the part A of this theorem, we also know that $\lim_{t\to +\infty} w(t,z) = \phi(z)$ uniformly on $\R$. 
Therefore there exist a large  $T_1>0$ and positive $q_2 < \min\{q^*,q_*\}$ such that, for all $z \in \R,$ $ t \geq  T_1-h,$
$$\phi(z) - q_2 \eta_\lambda(z-b)   \leq w(t,z) \leq \phi(z) + q_2 \eta_\lambda (z-b).
$$
Again applying Lemma \ref{Sttg}, we obtain that  
$$
\phi(z)-q_2e^{-\gamma (t-T_1)}\eta_\lambda(z-b)\leq
w(t,z)\leq\phi(z)+q_2e^{-\gamma (t-T_1)}\eta_\lambda(z-b )\quad t>T_1,\ z\in\R. 
$$
Thus 
$$
|\phi(z) -w(t,z)|_\lambda \leq (q_2e^{\gamma T_1})e^{-\gamma t }, \ t \geq T_1,
$$
that proves the second statement of the theorem. 
\qed
\end{proof}
\section{Stability lemma and  invariance of the leading asymptotic term}\label{Sec4}
In this section, we are presenting two additional results.  First we demonstrate  a 
quite general local stability lemma which will be used later in the proof of Theorem \ref{MER}. 
Below we take  $q_*, q^*, \delta^*, \gamma^*,  b >0$ as at the beginning of Section \ref{sub3}. 
\begin{lemma}\label{uls}  Assume that 
$c>c_*$ and write, for short, 
$\eta_{1}(z)=\min\{1,e^{\lambda_1(c) z}\}$ instead of $\eta_{\lambda_1}(z)$.  Then   $$
w_{\pm}(t,z):= \phi(z\pm \epsilon_\pm(t)) \pm qe^{-\gamma t}\eta_1(z), \ q \in (0, \min\{q^*,q_*\}],$$
are  super- and sub-solutions for   appropriately chosen functions  
\begin{eqnarray}\hspace{-.0cm}
\epsilon_+(t):= \frac{\alpha q}{\gamma}(e^{\gamma h}- e^{-\gamma
t})>0,  \quad   \epsilon_-(t): =-\frac{\alpha q}{\gamma}e^{-\gamma t}<0, \quad  t > -h. \nonumber
\end{eqnarray}
The parameters
$\alpha >0$  and $\gamma \in (0, \gamma^*)$ are fixed later in the proof and depend only on $g, \phi,
c, h$. 
\end{lemma}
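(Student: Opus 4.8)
The plan is to verify directly that $w_{\pm}$ fit Definition~\ref{def1} with the single corner point $z_*=0$. I would begin with a computation of $\mathcal{N}w_{\pm}$ for $z\neq0$: writing $\epsilon_+(t)$ for the positive shift in $w_+$ and $\sigma_-(t):=-\epsilon_-(t)>0$ for the (positive) shift in $w_-$, and substituting the profile equation~(\ref{EP}) evaluated at the shifted argument to cancel the $\phi'',\phi',\phi$ terms, one arrives at
\[
\mathcal{N}w_+(t,z)=qe^{-\gamma t}[\alpha\phi'(z+\epsilon_+(t))+\mathcal{L}\eta_1(z)]+g(\phi(z+\epsilon_+(t)-ch))-g(w_+(t-h,z-ch)),
\]
where $\mathcal{L}\eta:=(1-\gamma)\eta-\eta''+c\eta'$ and the bracket comes from $\epsilon_+'(t)=\alpha qe^{-\gamma t}$ together with the derivatives of $\eta_1$; the analogue for $w_-$ carries $-\alpha\phi'(z+\sigma_-(t))$, $-\mathcal{L}\eta_1(z)$ and the $g$-difference with the opposite sign. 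Since $\eta_1$ has a convex corner at $0$ ($\eta_1'(0-)=\lambda_1>0=\eta_1'(0+)$) one gets $(w_+)_z(t,0-)-(w_+)_z(t,0+)=qe^{-\gamma t}\lambda_1>0$, and the reversed inequality for $w_-$, so the corner conditions in~(\ref{sso}) hold automatically. Everything then reduces to the sign of $\mathcal{N}w_\pm$ off $z=0$.

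Before the sign analysis I record the facts that do the work. From $\chi_0(\lambda_1)=0$ one has $\mathcal{L}(e^{\lambda_1 z})=e^{\lambda_1 z}(g'(0)e^{-\lambda_1 ch}-\gamma)$ and $\mathcal{L}(1)=1-\gamma$; fixing $\gamma\in(0,\gamma^*)$ small (in particular $\gamma<g'(0)e^{-\lambda_1 ch}$, which is positive since $g'(0)e^{-\lambda_1 ch}=1+c\lambda_1-\lambda_1^2$) makes both quantities positive, hence $\mathcal{L}\eta_1(z)>0$ for every $z$. Next, since $g$ is strictly increasing the profile $\phi$ is strictly increasing: $\phi'\ge0$ on $\R$, $\phi'$ has a positive minimum on every compact interval, $\phi'(z)\sim\lambda_1 e^{\lambda_1 z}$ as $z\to-\infty$ (so $\phi'(z)e^{-\lambda_1 z}$ is bounded below near $-\infty$), and $\phi'\to0$ at $+\infty$. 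Finally both shifts are bounded, $\epsilon_+$ is increasing and $\sigma_-$ is decreasing, whence $\phi(z+\epsilon_+(t)-ch)\ge\phi(z+\epsilon_+(t-h)-ch)$ and $\phi(z+\sigma_-(t-h)-ch)\ge\phi(z+\sigma_-(t)-ch)$; by monotonicity of $g$ this lets me replace the delayed profile argument by the current one when estimating the $g$-differences.

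With $\gamma$ fixed I partition $\R$ in the variable $y=z-ch$ into a right region $y\ge r_1$ (with $r_1\ge0$ and $\phi(r_1)\ge\kappa-\delta^*$, so $\eta_1(z)=\eta_1(y)=1$ and $\phi(y+\epsilon_\pm(\cdot))\in[\kappa-\delta^*,\kappa)$ there); a far-left region $y\le r_0$ (with $r_0<-ch$, so $z<0$ and $\phi'(y)e^{-\lambda_1 y}$ bounded below); and the compact middle $r_0\le y\le r_1$. On the right region the contraction inequalities~(\ref{gg}) and~(\ref{gg1}), applied with parameter $qe^{-\gamma t}$ (which is $\le q\le\min\{q^*,q_*\}$, hence admissible), give the $g$-difference $\ge-qe^{-\gamma t}(1-2\gamma)$ for $w_+$ (resp.\ $\le qe^{-\gamma t}(1-2\gamma)$ for $w_-$), which against $\mathcal{L}(1)=1-\gamma$ leaves a net term $\pm\gamma qe^{-\gamma t}$ of the correct sign. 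On the other two regions I instead take $\alpha$ large enough that the shift absorbs the perturbation: using $\phi'\ge c_1 e^{\lambda_1 y}$ on the left and $\phi'\ge m_0>0$ on the middle together with $\epsilon_+(t)-\epsilon_+(t-h)=\sigma_-(t-h)-\sigma_-(t)=\frac{\alpha q}{\gamma}e^{-\gamma t}(e^{\gamma h}-1)$ and $\eta_1(y)\le1$, one forces $\phi(z+\epsilon_+(t)-ch)\ge w_+(t-h,z-ch)$ (resp.\ $\phi(z+\sigma_-(t)-ch)\le w_-(t-h,z-ch)$) pointwise, so the $g$-difference has the favorable sign by monotonicity; then $\mathcal{N}w_+\ge qe^{-\gamma t}[\alpha\phi'+\mathcal{L}\eta_1]$, strictly positive because $\mathcal{L}\eta_1>0$ on the left and, on the middle, a further enlargement of $\alpha$ dominates the bounded quantity $\sup_{\R}|\mathcal{L}\eta_1|$; symmetrically $\mathcal{N}w_-\le qe^{-\gamma t}[-\alpha\phi'-\mathcal{L}\eta_1]\le0$. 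I then fix $\alpha$ as the largest of the finitely many thresholds; the endpoints $r_0,r_1$ and the shift bound $\tfrac{\alpha q}{\gamma}e^{\gamma h}$ move with $\alpha$, but there is no circularity since the limits and infima of $\phi'$ used are uniform, and $q$ may be shrunk at will, so $\alpha,\gamma$ depend only on $g,\phi,c,h$ as asserted.

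I expect the main obstacle to be the delay term in the compact middle region for the super-solution $w_+$: there a Lipschitz bound on $g(\phi(z+\epsilon_+(t)-ch))-g(w_+(t-h,z-ch))$ is too lossy, since it produces an error proportional to $L_g\|\phi'\|_\infty\tfrac{\alpha}{\gamma}(e^{\gamma h}-1)$, which scales with $\alpha$ exactly like the good term $\alpha\inf\phi'$ and does not vanish as $\gamma\to0$. The resolution — and the reason no convexity or subtangency of $g$ is needed — is to not estimate the $g$-difference at all in that regime, but to use the positive increment of the shift $\epsilon_+$ to push the argument of $g$ at time $t-h$ strictly below its argument at time $t$, so that monotonicity of $g$ supplies the sign directly; the symmetric device handles $w_-$ on the far left. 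The remaining difficulty is purely bookkeeping: keeping track of how $r_0,r_1$ and the shift bound depend on $\alpha$, handled as above.
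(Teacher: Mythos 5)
Your setup (corner at $z_*=0$, cancellation via the profile equation, the identity for $\mathcal{L}\eta_1$, the monotone replacement of $\epsilon_\pm(t\mp h)$ by $\epsilon_\pm(t)$ in the delayed term, and the use of (\ref{gg}), (\ref{gg1}) on the right) matches the paper. The gap is in your ``compact middle'' region, and it is genuine. To force $\phi(y+\epsilon_+(t))-\phi(y+\epsilon_+(t-h))\ge qe^{-\gamma(t-h)}\eta_1(y)$ you need a lower bound on $\phi'$ at a mean-value point $\xi\in[\,y+\epsilon_+(t-h),\,y+\epsilon_+(t)\,]$, and this interval is \emph{not} contained in $[r_0,r_1]$: it reaches up to $r_1+E$ with $E=\alpha q e^{\gamma h}/\gamma$. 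Since $\phi'(\xi)\to0$ as $\xi\to+\infty$ (exponentially, because $\phi\to\kappa$), the constant $m_0=\inf_{[r_0,\,r_1+E]}\phi'$ decays exponentially in $\alpha$, while your favourable term grows only linearly in $\alpha$; so the required inequality $m_0\,\alpha\gamma^{-1}(e^{\gamma h}-1)\ge e^{\gamma h}$ fails for all large $\alpha$, and nothing guarantees it holds for any admissible $\alpha$. Your two escape hatches do not close this: the infima of $\phi'$ you invoke are \emph{not} uniform in $\alpha$ for precisely this reason, and $q$ cannot be ``shrunk at will'' --- the lemma must hold for every $q\in(0,\min\{q^*,q_*\}]$ with $\alpha,\gamma$ independent of $q$ (they are reused downstream with varying $q$).

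The difficulty you manufactured here is avoidable, because your ``main obstacle'' paragraph misdiagnoses the Lipschitz estimate. After the monotone replacement you already performed, the $g$-difference to be bounded is $g(\phi(z-ch+\epsilon_+(t)))-g(\phi(z-ch+\epsilon_+(t))+qe^{-\gamma(t-h)}\eta_1(z-ch))$ --- both arguments carry the \emph{same} shift $\epsilon_+(t)$ --- so the Lipschitz bound costs only $L_g\,qe^{\gamma h}e^{-\gamma t}\eta_1(z-ch)$, with no term proportional to $\alpha$. This is exactly what the paper exploits: it splits cases according to the position of the \emph{shifted} point $z\pm\epsilon_\pm(t)$ relative to $b$ (where $\phi(b-ch)=\kappa-\delta^*/2$), applies (\ref{gg}), (\ref{gg1}) when $z\pm\epsilon_\pm(t)\ge b$, and otherwise absorbs the Lipschitz loss into $\alpha q e^{-\gamma t}\phi'(z\pm\epsilon_\pm(t))\ge \alpha d\, q e^{-\gamma t}\eta_1(z\pm\epsilon_\pm(t))$ with $d=\inf_{z\le b}\phi'(z)/\eta_1(z)>0$ and $\alpha=d^{-1}e^{\gamma h}L_g$; since the lower bound on $\phi'$ is evaluated at the shifted point itself, no transition region and no circularity arise. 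If you want to keep your device of making the $g$-difference nonnegative outright, the same repair applies: base the dichotomy on $z\pm\epsilon_\pm(t)\lessgtr b$, so that the mean-value point stays in $(-\infty,b-ch]$, where $\phi'/\eta_1\ge d$ gives an $\alpha$-independent threshold.
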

\begin{proof}  Set $z_*=0$ and observe that the smoothness conditions of Definition \ref{def1} and  the second inequality in
(\ref{sso})  are satisfied in view of 
$$\pm\left(\frac{\partial w_\pm(t,0+)}{\partial z}- \frac{\partial
w_\pm(t,0-)}{\partial z}\right)= -  q\lambda_1(c) e^{-\gamma t} <0.$$ 
In order to establish 
the first inequality of (\ref{sso}),  we proceed with the following direct calculation: 
$$
\pm{\mathcal N}w_\pm(t,z):= \epsilon_\pm'(t)\phi'(z\pm\epsilon_\pm(t))- \gamma
q e^{-\gamma t}\eta_1(z)\mp \phi''(z\pm\epsilon_\pm(t))- q e^{-\gamma
t}\eta_1''(z)$$
$$\pm c\phi'(z\pm\epsilon_\pm(t))
+cq e^{-\gamma t}\eta_1'(z)\pm \phi(z\pm \epsilon_\pm(t))+ q e^{-\gamma
t}\eta_1(z)\mp g(w_{\pm}(t-h,z-ch)) \geq$$
$$  \alpha  q e^{-\gamma
t}\phi'(z\pm\epsilon_\pm(t))
 - \gamma q e^{-\gamma t}\eta_1(z)+ cq e^{-\gamma t}\eta_1'(z) +q e^{-\gamma t}\eta_1(z)-
q e^{-\gamma t}\eta_1''(z)$$
$$
\pm \left(g(\phi(z-ch\pm \epsilon_\pm(t)))-g(\phi(z-ch\pm\epsilon_\pm(t)) \pm
qe^{-\gamma (t-h)}\eta_1(z-ch))\right), \ z \not=0. $$
Here we are using the fact that  $g,
\phi, \epsilon_\pm$ are  increasing functions. 

From now on, we fix positive number  $$\gamma < \min\{\gamma^*, (g'(0)-1)e^{-\lambda_1ch}\min\{1,\lambda_1^{-1}\}\}$$ and
$d$, $\alpha$ defined by
\begin{equation}\label{al}
d:= \inf_{z\leq b} \phi'(z)/\eta_1(z)>0\quad\mbox{and}  \quad
\alpha:= d^{-1}e^{\gamma h}L_g.
\end{equation}
Note  that $\alpha$, $d$,  $\gamma$ depend only on $g,
\phi, c, h$.

We claim that  $\pm {\mathcal
N}w_\pm(t,z) \geq 0$ for all $z\not=0, \ t \geq 0$ and $q \in (0, \min\{q^*,q_*\}]$.

Indeed, suppose first that  $z\pm\epsilon_\pm(t) \leq b$. Then   
we find that 
$$0\geq \pm\left(g(\phi(z-ch\pm\epsilon_\pm(t)))-g(\phi(z-ch\pm \epsilon_\pm(t))
\pm qe^{-\gamma (t-h)}\eta_1(z-ch))\right) \geq $$ $$
 -L_g qe^{-\gamma
(t-h)}\eta_1(z-ch), \quad  \pm {\mathcal N}w_\pm (t,z) \geq $$ $$q e^{-\gamma
t}\left\{\eta_1(z\pm\epsilon_\pm(t))d\alpha+(
[1- \gamma] \eta_1(z)+ c\eta_1'(z)
-\eta_1''(z)- e^{\gamma h}L_g\eta_1(z-ch))\right\} $$
$$\geq  q e^{-\gamma
t}\left(\eta_1(z\pm\epsilon_\pm(t))d\alpha- e^{\gamma
h}L_g\eta_1(z-ch)\right) > 0.
$$
Similarly, if $z\pm\epsilon_\pm(t) \geq b,$ then invoking  (\ref{gg1})  and (\ref{gg}) we obtain,  for all $t \geq 0$, that 
$$0\geq \pm\left(g(\phi(z-ch\pm\epsilon_\pm(t)))-g(\phi(z-ch\pm \epsilon_\pm(t))\right)
\pm qe^{-\gamma (t-h)}\eta_1(z-ch)) \geq $$ $$ -qe^{-\gamma t}\eta_1(z-ch)(1-2\gamma),\quad \pm{\mathcal N}w_\pm(t,z) \geq $$ 
$$q e^{-\gamma t}\left( [1- \gamma]
\eta_1(z)+ c\eta_1'(z)
-\eta_1''(z)- (1-2\gamma)\eta_1(z-ch)\right) \geq $$
$$ q e^{-\gamma t} \left\{
\begin{array}{ll} e^{\lambda_1 z}
     [1- \gamma+ c\lambda_1
-\lambda_1^2- e^{-\lambda_1 ch}(1-2\gamma)], & {z< 0} \\
     \gamma, & {z > 0}
\end{array}%
\right\} >0.
$$
The proof of Lemma \ref{uls} is completed. \qed
\end{proof}
\begin{corollary}\label{In} Let $c>c_*$ and $\gamma >0$ be as in Lemma \ref{uls} and $\alpha$ be as in (\ref{al}).  Then there exists positive
number  $C=C(g,\phi)$ such that for each 
non-negative initial function $w_0$ satisfying 
\begin{equation*}\label{mlem} \phi(z)-q_- \eta_1(z) \leq w_0(s,z)\leq \phi(z)+q_+ \eta_1(z), \quad z\in\mathbb{R},\quad s\in [-h,0],
\end{equation*}
for some $0< q _\pm  \leq \varsigma_0:= \min\{\gamma, \min\{q_*,q^*\}\exp\left(- \lambda_1\alpha e^{\gamma h}\right)\}$,  
it holds 
\begin{equation}\label{c7}\phi(z-Cq_-)-Cq_-e^{-\gamma t}
\eta_1(z)  \leq w(t,z)\leq \phi(z+Cq_+)+Cq_+e^{-\gamma t}
\eta_1(z),
\end{equation} for all $
  z\in\mathbb{R}, t\geq -h$. 
\end{corollary}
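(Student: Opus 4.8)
The plan is to derive Corollary~\ref{In} directly from Lemma~\ref{uls} by converting the two-sided bound on $w_0$ into the hypotheses of that lemma and then tracking how the resulting shifts $\epsilon_\pm(t)$ can be absorbed into ordinary translations of $\phi$ together with the exponentially decaying term $q e^{-\gamma t}\eta_1(z)$. First I would apply Lemma~\ref{uls} with the given $q_\pm$ (which lie in $(0,\min\{q_*,q^*\}]$, as required, because $\varsigma_0 \le \min\{q_*,q^*\}$); since by hypothesis $w_-(s,z):=\phi(z-\epsilon_-(s))-q_-e^{-\gamma s}\eta_1(z)\le \phi(z)-q_-\eta_1(z)\le w_0(s,z)$ for $s\in[-h,0]$ (using $\epsilon_-(s)<0$ so $\phi(z-\epsilon_-(s))\ge\phi(z)$ is false — so one must be a little careful here: actually $\epsilon_-(s)<0$ gives $\phi(z+|\epsilon_-(s)|)\ge\phi(z)$, hence I should instead note $w_-(s,z)=\phi(z-\epsilon_-(s))-\dots$ has $-\epsilon_-(s)>0$, so $\phi(z-\epsilon_-(s))=\phi(z+|\epsilon_-(s)|)\ge\phi(z)$, which makes the sub-solution bound go the wrong way). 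The clean route is to use the monotonicity of the family: apply Lemma~\ref{uls}'s sub-solution $w_-(t,z)=\phi(z+\epsilon_-(t))-q_-e^{-\gamma t}\eta_1(z)$ with $\epsilon_-(t)=-(\alpha q_-/\gamma)e^{-\gamma t}<0$ — wait, the sign convention in the lemma is $w_\pm(t,z)=\phi(z\pm\epsilon_\pm(t))\pm qe^{-\gamma t}\eta_1(z)$ with $\epsilon_-(t)<0$, so $w_-$ reads $\phi(z-\epsilon_-(t))-q_-e^{-\gamma t}\eta_1(z)$ with $-\epsilon_-(t)>0$. Let me restate the intended comparison below.

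So, concretely: by Lemma~\ref{uls}, $w_+(t,z)=\phi(z+\epsilon_+(t))+q_+e^{-\gamma t}\eta_1(z)$ is a super-solution and $w_-(t,z)=\phi(z-\epsilon_-(t))-q_-e^{-\gamma t}\eta_1(z)$ is a sub-solution (with $\epsilon_\pm$ built from $q_\pm$ respectively). At times $s\in[-h,0]$ we have $0\le e^{-\gamma s}\le e^{\gamma h}$ and $0\le\epsilon_+(s)=(\alpha q_+/\gamma)(e^{\gamma h}-e^{-\gamma s})$ while $-\epsilon_-(s)=(\alpha q_-/\gamma)e^{-\gamma s}\in(0,(\alpha q_-/\gamma)e^{\gamma h}]$. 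To sandwich $w_0$, I need $w_-(s,z)\le w_0(s,z)\le w_+(s,z)$ on $\Pi_0$; the right inequality holds because $\phi(z+\epsilon_+(s))\ge\phi(z)$ (as $\epsilon_+(s)\ge 0$ and $\phi$ increasing) and $q_+e^{-\gamma s}\eta_1(z)\ge 0$... but that only gives $w_+(s,z)\ge\phi(z)$, not $\ge\phi(z)+q_+\eta_1(z)$. The point is rather that I should compare $w_0$ with the Lemma's solutions at $t=-h$ (or more precisely ensure the initial-history inequality holds), and the factor $e^{-\gamma s}$ is at most $e^{\gamma h}$. The honest fix: choose the constant $C$ large enough that $Cq_\pm\eta_1(z)$ dominates both the $\eta_1$ term and the $\phi$-shift slack; the key estimate is that for $q_\pm\le\varsigma_0$, the shift $\epsilon_\pm$ stays below a controlled multiple of $q_\pm$, namely $|\epsilon_\pm(t)|\le(\alpha/\gamma)e^{\gamma h}q_\pm$, and $e^{-\gamma(t-h)}\eta_1(z-ch)\le e^{\gamma h}e^{-\gamma t}\eta_1(z)$ (because $\eta_1(z-ch)\le e^{-\lambda_1 ch}\eta_1(z)\le\eta_1(z)$ for the relevant regime, and more simply $\eta_1$ is nondecreasing so $\eta_1(z-ch)\le\eta_1(z)$). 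Putting $C:=\max\{1,(\alpha/\gamma)e^{\gamma h}\cdot(\text{Lipschitz slack of }\phi)\}$, I verify that $\phi(z-Cq_-)-Cq_-e^{-\gamma t}\eta_1(z)\le w_-(t,z)$ and $w_+(t,z)\le\phi(z+Cq_+)+Cq_+e^{-\gamma t}\eta_1(z)$ for all $t\ge-h$, using $|\phi(z+\epsilon_+(t))-\phi(z+Cq_+)|$ absorbed via monotonicity once $Cq_+\ge\sup_t\epsilon_+(t)=(\alpha/\gamma)e^{\gamma h}q_+$, i.e.\ $C\ge(\alpha/\gamma)e^{\gamma h}$; and the $\eta_1$ terms match since $q_+e^{-\gamma t}\le Cq_+e^{-\gamma t}$.

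The structure of the argument is therefore: (1) note $q_\pm\le\varsigma_0$ implies $q_\pm\le\min\{q_*,q^*\}$, so Lemma~\ref{uls} applies with parameter $q_\pm$; (2) record the explicit bounds $0\le\epsilon_+(t)\le(\alpha/\gamma)e^{\gamma h}q_+$ and $0<-\epsilon_-(t)\le(\alpha/\gamma)e^{\gamma h}q_-$ for all $t\ge-h$; (3) set $C:=(\alpha/\gamma)e^{\gamma h}$ (enlarging to $C\ge 1$ if needed), which depends only on $g,\phi,c,h$, hence on $g,\phi$ after fixing $c,h$ as in the hypotheses; (4) check the sandwich $w_-(s,z)\le w_0(s,z)\le w_+(s,z)$ on $\Pi_0$ from the hypothesis $\phi(z)-q_-\eta_1(z)\le w_0(s,z)\le\phi(z)+q_+\eta_1(z)$ together with $\phi(z-\epsilon_-(s))\le\phi(z)$ wait — $-\epsilon_-(s)>0$ so $\phi(z-\epsilon_-(s))=\phi(z+(-\epsilon_-(s)))\ge\phi(z)$, so in fact $w_-(s,z)=\phi(z-\epsilon_-(s))-q_-e^{-\gamma s}\eta_1(z)$; to get $w_-(s,z)\le w_0(s,z)$ it suffices that $\phi(z-\epsilon_-(s))-q_-e^{-\gamma s}\eta_1(z)\le\phi(z)-q_-\eta_1(z)$... this needs $\phi(z-\epsilon_-(s))-\phi(z)\le q_-(e^{-\gamma s}-1)\eta_1(z)$, whose left side is positive and right side negative — so I must instead use a \emph{different} sub-solution, namely take the sub-solution of Lemma~\ref{uls} but start comparison from the constant-in-$s$ datum $\phi(z)-q_-\eta_1(z)$ applied to the evolution, which is exactly what Corollary~\ref{cod1}/Lemma~\ref{cl} allow. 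In other words: the cleanest proof applies Lemma~\ref{cl} with $w_\pm$ from Lemma~\ref{uls} only \emph{after} replacing the history $w_0(s,\cdot)$ by the envelope $\phi(\cdot)\pm q_\pm\eta_1$, which lies between $w_-(s,\cdot)$ and $w_+(s,\cdot)$ because $\epsilon_+(s)\ge 0$, $\epsilon_-(s)\le 0$ and $e^{-\gamma s}\ge 1$ for $s\le 0$ give $w_+(s,z)\ge\phi(z)+q_+\eta_1(z)$ and $w_-(s,z)\le\phi(z)-q_-\eta_1(z)$ — now the signs work. The main obstacle is precisely this bookkeeping of signs and of the factor $e^{\gamma h}$ versus $1$ in passing from the delayed history interval $[-h,0]$ to $t\ge 0$; once the envelope comparison at $s\in[-h,0]$ is set up correctly, Lemma~\ref{cl} gives $w_-(t,z)\le w(t,z)\le w_+(t,z)$ for all $t\ge-h$, and then a final monotonicity estimate of $\phi$ converts $\phi(z\pm\epsilon_\pm(t))$ into $\phi(z\pm Cq_\pm)$, yielding \eqref{c7}.
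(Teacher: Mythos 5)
Your upper bound is handled correctly and coincides with the paper's argument: since $\epsilon_+(s)\ge 0$ and $e^{-\gamma s}\ge 1$ for $s\in[-h,0]$, the hypothesis gives $w_0(s,z)\le \phi(z)+q_+\eta_1(z)\le \phi(z+\epsilon_+(s))+q_+e^{-\gamma s}\eta_1(z)=w_+(s,z)$, and Lemma \ref{cl} together with the monotonicity of $\phi$ and the bound $\epsilon_+(t)\le (\alpha e^{\gamma h}/\gamma)\,q_+=:C_1q_+$ yields the right-hand inequality of (\ref{c7}).

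The lower bound, however, contains a genuine gap. You correctly observe midway that the sub-solution of Lemma \ref{uls} is $w_-(t,z)=\phi(z-\epsilon_-(t))-q_-e^{-\gamma t}\eta_1(z)$ with $-\epsilon_-(t)>0$, so that $\phi(z-\epsilon_-(t))>\phi(z)$; but your concluding assertion that ``$e^{-\gamma s}\ge 1$ gives $w_-(s,z)\le \phi(z)-q_-\eta_1(z)$'' is false: at $s=0$ one has $e^{-\gamma s}=1$ and $w_-(0,z)=\phi\bigl(z+\alpha q_-/\gamma\bigr)-q_-\eta_1(z)>\phi(z)-q_-\eta_1(z)$ because $\phi$ is strictly increasing. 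Thus the initial comparison $w_-(s,\cdot)\le w_0(s,\cdot)$ required by Lemma \ref{cl} is not available, and the ``envelope'' fix you propose does not close the argument (you had in fact already diagnosed this obstruction earlier in your own text). The missing idea, which is how the paper proceeds, is a spatial translation: since (\ref{E1}) is autonomous in $z$, one compares the sub-solution with the shifted datum $w_0(s,z-\epsilon_-(-h))$, where $-\epsilon_-(-h)=C_1q_-\ge -\epsilon_-(s)$ for all $s\ge -h$. Then
$w_0(s,z+C_1q_-)\ge \phi(z+C_1q_-)-q_-\eta_1(z+C_1q_-)\ge \phi(z-\epsilon_-(s))-q_-e^{\lambda_1C_1q_-}\eta_1(z)$,
and the amplified amplitude $q_-e^{\lambda_1C_1q_-}$ still lies in $(0,\min\{q_*,q^*\}]$ precisely because $q_-\le\varsigma_0$ --- this is the entire reason for the two factors $\gamma$ and $\exp(-\lambda_1\alpha e^{\gamma h})$ in the definition of $\varsigma_0$, which your argument never uses (a telltale sign that the smallness hypothesis has not been exploited). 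Applying Lemmas \ref{uls} and \ref{cl} to the translated solution $w(t,z+C_1q_-)$ and then undoing the shift (using that $\eta_1$ is nondecreasing) produces the left-hand inequality of (\ref{c7}) with the second constant $C_2=\exp(\lambda_1\alpha e^{\gamma h})$, so that $C=\max\{C_1,C_2\}$; your proposed $C=(\alpha/\gamma)e^{\gamma h}$ alone would not account for this amplification.
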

\begin{proof}  The right hand side inequality  in (\ref{c7}) is a direct consequence of Lemmas \ref{cl} and \ref{uls} in view  of the 
estimations 
$$
w_0(s,z) \leq \phi(z)+q_+\eta_1(z)\leq \phi(z+\epsilon_+(s))+q_+e^{-\gamma s}\eta_1(z), \
(z,s) \in \Pi_0.$$
Since $\epsilon_+(t)$ increases on $\R$, this proves this part of  inequality (\ref{c7})  with $C=C_1:= \alpha e^{\gamma h}/\gamma$.

In order to prove the left hand side inequality  in (\ref{c7}), observe that 
$$
w_0(s,z-\epsilon_-(-h))\geq  \phi(z-\epsilon_-(s))-q_-e^{-\lambda_1\epsilon_-(-h)} e^{-\gamma s}\eta_1(z) \geq 
$$
$$
 \phi(z-\epsilon_-(s))- \min\{q_*,q^*\} e^{-\gamma s}\eta_1(z), \quad
(z,s) \in \Pi_0.$$
This implies that, for all $t \geq -h$, $z \in \R$, it holds  
$$
w(t,z)\geq  \phi(z - \epsilon_+(t))-q_-e^{-\lambda_1\epsilon_-(-h)} e^{-\gamma t}\eta_1(z) \geq$$
 $$\phi(z - C_1q_-)-C_2q_- e^{-\gamma t}\eta_1(z), \ C_2:= \exp\left( \lambda_1\alpha e^{\gamma h}\right).$$
 Setting $C= \max\{C_1,C_2\}$, we complete the proof of Corollary \ref{In}. 
 \qed
 \end{proof}
\begin{corollary}\label{coco}
For every $\epsilon >0$ there exists $\varsigma(\epsilon) >0$ such
that $$|\phi(\cdot ) - w(s,\cdot)|_{\lambda_1} < \varsigma(\epsilon),\quad s \in
[-h,0],$$ implies $|\phi(\cdot ) - w(t,\cdot)|_{\lambda_1}
< \epsilon$ for all $t \geq 0$.
\end{corollary}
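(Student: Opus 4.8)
The plan is to read the statement off the squeezing estimate of Corollary \ref{In}. Throughout write $\lambda_1=\lambda_1(c)$ (here $c>c_*$, as in Lemma \ref{uls}) and $\eta_1(z)=\min\{1,e^{\lambda_1 z}\}$, and let $C=C(g,\phi)>0$, $\gamma>0$, $\alpha>0$ and $\varsigma_0>0$ be the quantities provided by Lemma \ref{uls} and Corollary \ref{In}. The one nontrivial preliminary fact I would record first is the uniform bound
$$
\Phi:=\sup_{z\in\R}\frac{\phi'(z)}{\eta_1(z)}<\infty .
$$
On $[b,+\infty)$ this is clear ($\eta_1\equiv 1$ there while $\phi'$ is bounded, in fact $\phi'\to 0$ since $\phi$ is monotone with $\phi(+\infty)=\kappa$), and on $(-\infty,b]$ it amounts to the estimate $\phi'(z)=O(e^{\lambda_1 z})$, $z\to-\infty$, i.e.\ to the standard asymptotics $\phi'(z)\sim\lambda_1 e^{\lambda_1 z}$ of a monotone front normalised by $\phi(z)e^{-\lambda_1 z}\to 1$; this lies in the same circle of estimates that yields the positive constant $d$ of (\ref{al}), cf.\ \cite{TPT,STR}.

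Given $\epsilon>0$, set $K:=C\bigl(1+\Phi e^{\lambda_1 C\varsigma_0}\bigr)$ and define $\varsigma(\epsilon):=\min\{\varsigma_0,\epsilon/(2K)\}>0$. Assume $|\phi(\cdot)-w(s,\cdot)|_{\lambda_1}<\varsigma(\epsilon)$ for every $s\in[-h,0]$, and abbreviate $\varsigma:=\varsigma(\epsilon)$. Since $|f|_{\lambda_1}=\sup_z|f(z)|/\eta_1(z)$, this hypothesis is equivalent to
$$
\phi(z)-\varsigma\,\eta_1(z)\le w(s,z)\le\phi(z)+\varsigma\,\eta_1(z),\qquad z\in\R,\ s\in[-h,0].
$$
As $0<\varsigma\le\varsigma_0$ and $w(s,\cdot)\ge 0$, Corollary \ref{In}, applied with initial datum the restriction of $w$ to $\Pi_0$ and $q_-=q_+=\varsigma$, gives for all $z\in\R$ and $t\ge -h$
$$
\phi(z-C\varsigma)-C\varsigma e^{-\gamma t}\eta_1(z)\le w(t,z)\le\phi(z+C\varsigma)+C\varsigma e^{-\gamma t}\eta_1(z).
$$

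It remains to convert this into a bound on $|\phi(\cdot)-w(t,\cdot)|_{\lambda_1}$. Using $\phi'\ge 0$, the mean value theorem, $\phi'(u)\le\Phi\eta_1(u)$, and $\eta_1(u)\le e^{\lambda_1(u-z)}\eta_1(z)\le e^{\lambda_1 C\varsigma_0}\eta_1(z)$ for $u\in[z,z+C\varsigma]$, one gets $\phi(z+C\varsigma)-\phi(z)\le C\varsigma\,\Phi e^{\lambda_1 C\varsigma_0}\eta_1(z)$; likewise, since $\eta_1$ is nondecreasing, $\phi(z)-\phi(z-C\varsigma)\le C\varsigma\,\Phi\,\eta_1(z)$. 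Inserting these into the squeezing estimate and using $e^{-\gamma t}\le 1$ for $t\ge 0$ yields $|\phi(z)-w(t,z)|\le K\varsigma\,\eta_1(z)$ for every $z\in\R$ and $t\ge 0$, hence $|\phi(\cdot)-w(t,\cdot)|_{\lambda_1}\le K\varsigma\le\epsilon/2<\epsilon$, which is the assertion. The only step that is not purely mechanical is the finiteness of $\Phi$, that is, controlling $\phi'$ at $-\infty$ by $e^{\lambda_1 z}$; everything else is a routine use of Corollary \ref{In} together with elementary properties of $\eta_1$.
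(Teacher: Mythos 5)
Your proof is correct and follows essentially the same route as the paper: the paper likewise sets $\varsigma(\epsilon)=\min\bigl\{\varsigma_0,\ \epsilon/\bigl(C(1+e^{\lambda_1 C\varsigma_0}\sup_{z}[\phi'(z)/\eta_1(z)])\bigr)\bigr\}$ and invokes Corollary \ref{In}, leaving the mean-value-theorem conversion implicit. You have merely spelled out the details (including the finiteness of $\sup_z\phi'(z)/\eta_1(z)$, which the paper also takes for granted), so there is nothing to correct.
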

\begin{proof} It suffices to take 
$$
\varsigma(\epsilon) = \min\left\{\varsigma_0, \frac{\epsilon}{C(1+e^{\lambda_1C\varsigma_0}\sup_{z \in \R}\left[\phi'(z)/\eta_1(z)\right])} \right\},
$$
where $C= \max\{C_1,C_2\}$ was defined in the proof of Corollary \ref{In} and to apply  Corollary \ref{In}.  \qed
\end{proof}
The second main result of this section assures the invariance of the main asymptotic term at $-\infty$ of solutions with `good' initial data.  It sheds some new light on the conclusions of Corollary \ref{zd}A.
\begin{lemma}\label{Inv}
Suppose that the birth function $g$ is bounded and that there exists $g'(0)>1$. If  the initial fragment $u(s,z)$ of a bounded solution $u(t,z)$ to equation 
(\ref{e1}) is such that, for some positive eigenvalue $\lambda_j(c), \ j =1,2$, it holds   that $u(s,x-cs)e^{-\lambda_j(c)x} \to 1, \ x \to -\infty,$  for each
$s\in[-h,0]$. 
Then also it holds that $u(t,x-ct)e^{-\lambda_j(c)x} \to 1, \ x \to -\infty,$ for each $t\geq 0$. 
\end{lemma}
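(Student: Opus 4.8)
The plan is to pass to the moving frame $z=x+ct$, where (as in Section~\ref{sub2}) $w(t,z):=u(t,z-ct)$ solves~(\ref{E1}), so that the hypothesis becomes $w(s,z)e^{-\lambda_j z}\to1$ as $z\to-\infty$ for each $s\in[-h,0]$ (abbreviate $\lambda_j:=\lambda_j(c)$), and the goal is the same assertion for $w(t,\cdot)$, $t\ge0$. I would build everything around the single structural fact that $\chi_0(\lambda_j)=0$ makes $\zeta\mapsto e^{\lambda_j\zeta}$ a \emph{stationary} solution of the linearisation $w_t=w_{zz}-cw_z-w+g'(0)w(t-h,z-ch)$ of~(\ref{E1}) at the trivial state. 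The proof then proceeds by induction over the time slabs $[nh,(n+1)h]$, $n\ge0$: on each such slab the delayed argument of $g$ lies in the previous slab, so (\ref{E1}) reduces to the \emph{linear, non-delayed} heat-type equation $w_t-w_{zz}+cw_z+w=f_n(t,z):=g(w(t-h,z-ch))$, and I would use Duhamel's representation of its bounded solution to read off that the leading coefficient $1$ at $z=-\infty$ is reproduced on the next slab.

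Concretely, assuming inductively that $w(s,z)e^{-\lambda_j z}=1+\varepsilon_s(z)$ with $\varepsilon_s(z)\to0$ as $z\to-\infty$ for all $s\in[(n-1)h,nh]$ (the base case $n=0$ being the hypothesis), I would write, on $[nh,(n+1)h]$,
$$w(t,z)=\int_{\R}K(t-nh,z-y)w(nh,y)\,dy+\int_{nh}^{t}\!\!\int_{\R}K(t-\tau,z-y)f_n(\tau,y)\,dy\,d\tau,$$
with $K(\sigma,\zeta)=(4\pi\sigma)^{-1/2}e^{-\sigma}e^{-(\zeta-c\sigma)^2/(4\sigma)}$ the fundamental solution of $\partial_t-\partial_{zz}+c\partial_z+1$, for which $\int_{\R}K(\sigma,z-y)e^{\lambda_j y}\,dy=e^{-a\sigma}e^{\lambda_j z}$ with $a:=g'(0)e^{-\lambda_j ch}=-(\lambda_j^2-c\lambda_j-1)>0$. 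Splitting $w(nh,y)=e^{\lambda_j y}+\varepsilon_{nh}(y)e^{\lambda_j y}$ and, with the $C^{1,\theta}$-estimate in~{\rm\bf(H)} (so that $|g(\xi)-g'(0)\xi|\le C\xi^{1+\theta}$ for $0\le\xi\le\delta_0$), $f_n(\tau,y)=g'(0)e^{-\lambda_j ch}e^{\lambda_j y}+r_n(\tau,y)$ with $r_n(\tau,y)=o(e^{\lambda_j y})$ as $y\to-\infty$ uniformly in $\tau$ (since $(1+\theta)\lambda_j>\lambda_j$ and $\varepsilon_{\tau-h}\to0$), the ``principal'' part of the Duhamel formula collapses to $\bigl(e^{-a(t-nh)}+a\int_0^{\,t-nh}e^{-a\sigma}\,d\sigma\bigr)e^{\lambda_j z}=e^{\lambda_j z}$, the identity $e^{-a\sigma}+a\int_0^{\sigma}e^{-a\tau}\,d\tau\equiv1$ being precisely the infinitesimal form of $\chi_0(\lambda_j)=0$; everything else is a convolution of $K$ against $\varepsilon_{nh}(\cdot)e^{\lambda_j\cdot}$ or against $r_n(\tau,\cdot)$.

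The heart of the matter — and the step I expect to be the main obstacle — is showing that these leftover convolutions are genuinely $o(e^{\lambda_j z})$ as $z\to-\infty$, uniformly enough to push through the $\tau$-integration and the induction. Here I would use that for any $\phi$ with $\phi(y)e^{-\lambda_j y}\to0$ at $-\infty$ and $|\phi(y)|\lesssim 1+e^{\lambda_j y}$, splitting $\int_{\R}K(\sigma,z-y)\phi(y)\,dy$ at $y=z/2$ works: on $\{y\le z/2\}$ one bounds $|\phi(y)|\le\omega(z/2)\,e^{\lambda_j y}$ with $\omega(\cdot)\to0$ at $-\infty$, yielding a term $\le\omega(z/2)e^{-a\sigma}e^{\lambda_j z}=o(e^{\lambda_j z})$; on $\{y\ge z/2\}$ one sits in the far tail of the Gaussian $K(\sigma,z-\cdot)$, at distance $\gtrsim|z|/2$ from its centre $z-c\sigma$, so this piece is $O\bigl((1+e^{\lambda_j z})e^{-c'z^2/\sigma}\bigr)=o(e^{\lambda_j z})$ — the boundedness of $g$ and of $w$ entering only through this tail. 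Applying this with $\phi=\varepsilon_{nh}(\cdot)e^{\lambda_j\cdot}$ and $\phi=r_n(\tau,\cdot)$, and letting the resulting $o(e^{\lambda_j z})$ pass through $\int_{nh}^{t}(\cdot)\,d\tau$ by dominated convergence (since $\int_0^h e^{-a\sigma}\,d\sigma<\infty$), gives $w(t,z)=e^{\lambda_j z}+o(e^{\lambda_j z})$, i.e. $w(t,z)e^{-\lambda_j z}\to1$ as $z\to-\infty$ for every $t\in[nh,(n+1)h]$; this closes the induction, whence $w(t,z)e^{-\lambda_j z}\to1$ at $-\infty$ for all $t\ge0$, i.e. $u(t,x-ct)e^{-\lambda_j x}\to1$ as $x\to-\infty$. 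Two minor points to settle along the way: because of the $\tau$-integral only the \emph{pointwise}-in-$s$ form of the hypothesis is ever needed (dominated convergence supplies the rest), and no monotonicity of $g$ is used, consistently with the stated generality of the lemma.
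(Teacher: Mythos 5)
Your argument is correct and is essentially the paper's own proof: both proceed step by step over the slabs $[nh,(n+1)h]$, represent the solution by Duhamel's formula against the (suitably shifted/damped) heat kernel, and use the characteristic identity $\chi_0(\lambda_j)=0$ to make the initial-data term and the source term recombine into exactly $e^{\lambda_j z}$. The only cosmetic differences are that the paper works in the original frame via the substitution $U=e^{t}u$ and interchanges $\lim_{x\to-\infty}$ with the convolution by dominated convergence, whereas you work in the moving frame and control the remainder convolutions by the explicit split at $y=z/2$.
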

 \begin{proof}Due to a step by step argument, it is sufficient to consider the situations when $t \in [0,h]$. 
 Set  $U(t,x):=e^{t}u(t,x)$, then $U(s,x-cs)e^{-\lambda_j(c)x} \to e^s, \ x \to -\infty,$ and 
$$
U_t (t,x)=U_{xx}(t,x)+e^{t}g(e^{-t+h}U(t-h,x)), \quad \ t>0,\ x\in\R.
$$
Hence, by Duhamel's formula (see e.g. \cite[Theorem 12, p. 25]{AF}), 
$$
U(t,x) = \Gamma(t, \cdot) *U(0,\cdot) + \int_0^t \Gamma(t-s, \cdot) *e^{s}g(e^{-s+h}U(s-h,\cdot))ds, 
$$
$$
\mbox{where }\qquad \Gamma(t,x) = \frac{1}{2\sqrt{\pi t}} e^{-x^2/4t}, \ t >0,\ x \in \R, 
$$
is the fundamental solution and $\Gamma(t, \cdot) *U(s,\cdot)$ denotes the convolution on $\R$ with respect to the missing space variable. 

By Lebesgue's dominated convergence theorem, for each $s \in [-h,0], t > 0$, 
$$
\lim_{x\to-\infty}e^{-\lambda_j x}\Gamma(t, \cdot) *
U(s,\cdot)=$$
$$\frac{1}{2\sqrt{t\pi}}\int_{\R}e^{-\frac{1}{4t}[(y+2t\lambda_j)^{2}-4t^{2}\lambda_j^{2}]}\lim_{x\to-\infty}
e^{-\lambda_j(x-y)}U(s, x-y)dy
=e^{\lambda_j^{2}t+\lambda_jcs+s}.
$$
Consequently, for $t \in (0,h]$, we have that 
\begin{eqnarray*}
&&\hspace{-3mm} \lim_{x\to-\infty}e^{-\lambda_j x}U(t,x)=e^{\lambda_j^{2}t}+\int_{0}^{t}\lim_{x\to-\infty}e^{-\lambda_j
x}\Gamma(t-s,\cdot)* g(e^{-s+h}w(s-h,\cdot))e^{s}ds\\
&=& e^{\lambda^{2}t}+g'(0)e^{-\lambda
ch}e^{\lambda^{2}t}\int_{0}^{t}e^{(-\lambda^{2}+\lambda c+1)s}ds
=e^{(1+\lambda c)t}.
\end{eqnarray*}
Finally,  we obtain the relation 
$
\lim_{x\to-\infty}e^{-\lambda_j
x}u(t,x)=e^{\lambda_j ct}
$ for each $t \in (0,h]$ which completes the proof of the lemma. 
\qed
\end{proof}
\begin{remark} An obvious modification of the  proof of Lemma \ref{Inv} yields the following assertion:
{\it Assume that the birth function $g:\R_+\to \R_+,\ g(0)=0,$ is bounded and  Lipschitz continuous. Suppose also that 
the initial fragments $u_k(s,z),\ k=1,2,$ of bounded solutions $u_k(t,z)$ to equation 
(\ref{e1}) satisfy,  for some positive $\mu$,  the relation 
$$(u_1-u_2)(s,x-cs)e^{-\mu x} \to 0, \ x \to -\infty,\ s\in[-h,0].$$ 
Then $(u_1-u_2)(t,x-ct)e^{-\mu x} \to 0, \ x \to -\infty,$ for each $t\geq 0$. }

This result provides a short and elementary  justification for  a delicate aspect of getting {\it a priori} estimates for a weighted energy  method  developed by Mei {\it et al.} \cite{MLLS,MLLS2,MeiI,MeiF}.  Indeed, an important  initial fragment of the derivation of these estimates includes elimination of the boundary term $$(u-\phi)(t,x-ct)e^{-\mu x}|_{x=-\infty} = (w(t,z)-\phi(z))e^{-\mu z}e^{\mu ct}|_{z=-\infty} =0.$$ For instance, see \cite[p. 855]{LvW},  \cite[formulas (3.9)-(3.11)]{MeiI} or \cite[p. 1067]{LLLM}. 
\end{remark}
\section{Proof of Theorem \ref{MER}} \label{sub4}
We start by establishing the following result.
\begin{lemma}\label{Lem2} 
Assume that the initial function $w_{0}(s,z)\geq 0$ is uniformly bounded on the strip  $[-h,0]\times \R$ 
(say, by some $K>0$) and satisfies  the hypothesis $(IC2)$ and, for some $c> c_*$, it holds 
$$
\lim_{z\to -\infty} w_0(s,z)
e^{-\lambda_1(c)z} =1
$$
uniformly on $s\in[-h,0]$.
Then for each $\varsigma >0$ there exists $L\in \R$ and $\psi \in C^2(\R)$  such that 
$\psi(z) = (1+\varsigma +o(1))e^{\lambda_1(c)z},$ $ z \to -\infty,$  and $\psi'(z) >0$ for $z \in \R$,  $\psi(L)=K,$ $\psi(+\infty) = +\infty$, $w_0(s,z) < \psi(z), \ z \leq L, s \in [-h,0]$, and 
\begin{equation*}\label{psih}
\psi''(z)-c\psi'(z)-\psi(z)+g(\psi(z-ch))< 0, \ z \leq L. 
\end{equation*}
\end{lemma}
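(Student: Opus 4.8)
The function $\psi$ to be produced is, on the half–line $(-\infty,L]$, a stationary super–solution of the profile equation \eqref{EP}, i.e. a function with $\mathcal N\psi\ge 0$ there (cf. Definition~\ref{def1}), normalised so that $\psi(z)\sim(1+\varsigma)e^{\lambda_1(c)z}$ as $z\to-\infty$, lying strictly above $w_0$ on $(-\infty,L]$, and with $\psi(L)=K$; on $(L,+\infty)$ the only requirements are $C^2$–regularity, monotonicity and $\psi(+\infty)=+\infty$, and these cost nothing once $\psi|_{(-\infty,L]}$ is in hand. The plan is to build $\psi|_{(-\infty,L]}$ by splicing together, in a $C^2$ manner, pieces adapted to the size of $\psi$, and only afterwards to fix $L$ (and the location of the pieces) so as to force $w_0<\psi$ on all of $(-\infty,L]$.

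First fix $\nu\in\bigl(\lambda_1(c),\min\{(1+\theta)\lambda_1(c),\lambda_2(c)\}\bigr)$, a nonempty interval since $c>c_*\ge c_\#$ gives $\lambda_1(c)<\lambda_2(c)$. On a far–left half–line on which $\psi$ stays below a small $\delta\le\delta_0$, put $\psi(z)=(1+\varsigma)e^{\lambda_1(c)z}+Be^{\nu z}$ with $B>0$. Since $\chi_0(\lambda_1(c))=0$ and $\chi_0(\nu)<0$, a direct computation gives $\psi''-c\psi'-\psi+g'(0)\psi(\cdot-ch)=B\chi_0(\nu)e^{\nu z}<0$; adding the remainder $g(\psi(\cdot-ch))-g'(0)\psi(\cdot-ch)$, which is $O(\psi(\cdot-ch)^{1+\theta})$ by the Hölder part of \textbf{(H)} because $\psi\le\delta_0$ there, and using $\nu<(1+\theta)\lambda_1(c)$, one checks $\psi''-c\psi'-\psi+g(\psi(\cdot-ch))<0$ on this half–line. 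Here $\psi(z)=(1+\varsigma+o(1))e^{\lambda_1(c)z}$ and $\psi(z)>(1+\varsigma)e^{\lambda_1(c)z}$, so as soon as the half–line is chosen to the left of the threshold coming from the uniform convergence $w_0(s,z)e^{-\lambda_1(c)z}\to1$ (and from $1+\varsigma>1$), one also gets $w_0(s,z)<\psi(z)$ there.

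Next prolong $\psi$ to the right through the intermediate band $\psi\in[\delta,\kappa]$ by a $C^2$, increasing, sufficiently steep (slightly concave) arc, and finally, once $\psi\ge\kappa$, by $Ae^{\mu z}$ with $0<\mu<c$ and $A$ matching. On this last piece $\psi,g$ are increasing and $g(u)\le u$ for $u\ge\kappa$, so $g(\psi(z-ch))\le g(\psi(z))\le\psi(z)$ and $\psi''-c\psi'-\psi+g(\psi(\cdot-ch))\le\psi''-c\psi'=\mu(\mu-c)Ae^{\mu z}<0$, while $\psi\to+\infty$. On the band, where $\psi(z)\le\kappa$, the same monotonicity gives $g(\psi(z-ch))\le g(\psi(z))\le\psi(z)+G_0$ with $G_0:=\max_{[0,\kappa]}(g-\mathrm{id})<\infty$, so $\psi''-c\psi'-\psi+g(\psi(\cdot-ch))\le\psi''-c\psi'+G_0$, which is negative when the arc is steep enough and $\psi''\le0$; on the sub–band $\psi\le\delta_0$ the sharper Hölder bound of \textbf{(H)} is used instead. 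One then takes $L$ to be the unique point with $\psi(L)=K$, rounds off the finitely many junctions keeping $\psi'>0$ and the strict inequalities, and, by lowering the matching level $\delta$ so that the whole "climbing part" sits far enough to the left, arranges that $L$ lies to the left of the above threshold; then $w_0(s,z)<(1+\varsigma)e^{\lambda_1(c)z}\le\psi(z)$ for every $z\le L$, which is the last requirement.

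The delicate point is the passage through the intermediate band (and the junctions feeding into it): there $g$ is merely Lipschitz, with no convexity, subtangency or one–sided bound on $g'$, while at the same time $\psi$ must "turn the corner" from exponential growth of rate $\lambda_1(c)$ to a growth able to overtake the level $K$; hence the super–solution inequality on this stretch has to be extracted from the $-c\psi'+\psi''$ terms against the constant $G_0$, and all the free parameters ($\delta$, $B$, $\nu$, $\mu$, the steepness, curvature and position of the middle arc, and the final choice of $L$) must be tuned simultaneously so that $C^2$–matching, strict monotonicity, the asymptotics at $-\infty$ and the domination of $w_0$ are all respected at once.
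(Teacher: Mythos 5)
The construction breaks down exactly at the point you flag as ``delicate'', and tuning parameters cannot repair it: the passage through the intermediate band is a genuine gap, not a technicality. In the band your only available estimate is $g(\psi(z-ch))\le g(\psi(z))\le\psi(z)+G_0$ with $G_0=\max_{[0,\kappa]}(g-\mathrm{id})>0$ a fixed constant of order one (recall $g'(0)>1$, and in the pushed case $g$ may lie well above both the identity and the tangent line $g'(0)u$ on $[\delta_0,\kappa]$ --- no convexity, subtangency or $L_g=g'(0)$ is assumed here). For a concave arc this forces $\psi'>G_0/c$ on the whole stretch where $\psi\in[\delta_0,\kappa]$, in particular at its left endpoint. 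But that endpoint must be $C^1$-matched to what comes before, and before it the super-solution inequality itself caps the slope: on the tail $(1+\varsigma)e^{\lambda_1 z}+Be^{\nu z}$ the slope at the level $\delta_0$ is at most $\nu\delta_0<\lambda_2(c)\,\delta_0$, and inside the region $\psi\le\delta_0$ the H\"older form of the inequality, $\psi''\le c\psi'+\psi-g'(0)\psi(\cdot-ch)+O(\psi(\cdot-ch)^{1+\theta})$, prevents $\psi'$ from being ramped up beyond a multiple of $\psi$ itself (any faster growth turns $\psi$ into a sub-solution of the linearization). Since $\delta_0$ and $G_0$ are independent data of $g$, one can perfectly well have $G_0/c\gg\lambda_2(c)\,\delta_0$, and then no $C^1$ function can enter the band with the slope your band estimate requires while remaining a super-solution on the way there. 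Shrinking $\delta$, enlarging $B$, or repositioning the arc does not change this, because the obstruction lives at the fixed level $\delta_0$.

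This is precisely the difficulty the paper's proof is organised around, and it is resolved by a completely different device: one takes $\psi_2$ to be the \emph{exact} solution of the profile equation \eqref{EP} with the fast asymptotics $(\psi_2,\psi_2')=(1,\lambda_2)e^{\lambda_2 t}+O(e^{(\lambda_2+\epsilon)t})$ at $-\infty$, proves by a sliding comparison with the unique monotone front (using the uniqueness theorem of \cite{TPT} and the strict monotonicity of $g$) that $\psi_2$ is increasing and eventually exceeds $\kappa$ on all of $\R$, and only then perturbs it by $\epsilon(e^{\lambda_1 t}+e^{\mu t})$, $\mu\in(\lambda_1,\min\{(1+\theta)\lambda_1,\lambda_2\})$, to install the $(1+\varsigma)e^{\lambda_1 t}$ asymptotics; the climb through $[\delta_0,\kappa]$ is carried by the exact solution, for which no inequality needs to be checked there, and the continuation to $+\infty$ is done by solving an auxiliary ODE with a small negative forcing. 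Your far-left piece (which is essentially the paper's perturbation term) and your top piece above $\kappa$ are fine, but your proposal contains no substitute for the $\psi_2$-plus-sliding step, which is the actual content of the lemma.
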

\begin{proof} Since $c> c_*$, the linearisation of  equation (\ref{EP}) about $0$ has exactly two 
real simple eigenvalues $\lambda_1(c) < \lambda_2(c)$. In particular,  the linearised equation has a 
positive solution $(\phi(t), \phi'(t)) = (1,\lambda_2(c)))e^{\lambda_2(c)t}$. Moreover, the eigenvalue $\lambda_2 = \lambda_2(c)$  is dominant (i.e. $\Re \lambda_j (c) < \Re \lambda_2$ for all other eigenvalues $\lambda_j (c), j \not=2$).  As a consequence, equation (\ref{EP}) has a  solution $\psi_2(t)$ with the following asymptotic behaviour at $-\infty$: 
$$
 (\psi_2(t), \psi_2'(t)) = (1,\lambda_2)e^{\lambda_2t} + O(e^{(\lambda_2+\epsilon)t}), \ t \to -\infty, \ \epsilon >0
$$ 
(see e.g. \cite[Theorem 2.1]{FTnon} for more detail).  

In this way, there exists a maximal open non-empty interval $(0,T),$ $T \in \R \cup \{+\infty\},$ such that $\psi_2(t) >0,\ \psi_2'(t) >0$   for all $t \in (0,T)$. 

\underline{\sf We claim} that $\psi_2(T) >\kappa$ and $T=+\infty$.  First, it should be noted that  $\psi_2(T) \not=\kappa$ since otherwise we obtain a)  if $T$ is finite then   $\psi_2(T)=\kappa > g(\psi_2(T-ch)),$ $\psi'_2(T)=0,$  $\psi''_2(T)\leq 0,$ contradicting  (\ref{EP}); b) if $T=+\infty$ then $\psi_2(t)$ is  a monotone heteroclinic connection between $0$ and $\kappa$, different from $\psi_1$.  Here $\psi_1(t)$ denotes the unique monotone wavefront to  (\ref{EP}) normalised by the condition 
$\psi_1(t)e^{-\lambda_1t} = 1 +o (1),$ $ t \to -\infty$.  This contradicts  the uniqueness of the wavefront $\psi_1$ established in \cite{TPT}.  Next, suppose that $\psi_2(T) <\kappa$ and consider the difference $\theta_a(t) = \psi_1(t)-\psi_2(t+a), \ t \in \R,$ for some fixed $a\in \R$. Since $\psi_1$ is a strictly monotone heteroclinic connection between $0$ and $\kappa$, there exists a unique $S\in \R$ such that $\psi_1(S)=\psi_2(T)$. Now, taking into account  the inequality $\lambda_1 <\lambda _2$, we obtain that, for each fixed $a$, the function 
$\theta_a(t)$ is positive  in some maximal interval $(-\infty, \sigma(a))$.  If we choose $b= T-S$ then 
$\theta_b(S)=0,$ $\theta'_b(S) >0$ and therefore $\sigma(b)=\sigma(T-S)<S$, $\theta_b(\sigma(b))=0$.  On the other hand,  $\theta_{a_1}(t) >0, \ t \in [\sigma(b),S]$,  for some large negative $a_1\leq b$. Note also that $\theta_a(t) >\theta_b(t) > 0, \ t \leq \sigma(b)$ if $a <b$.  In consequence, there exists  $d \in (a_1,b]$ such that $\theta_d(\sigma(d))=\theta_d'(\sigma(d))=0\leq \theta_d''(\sigma(d))$ and $\theta_d(\sigma(d)-ch) = \psi_1(\sigma(d)-ch)-\psi_2(d+\sigma(d)-ch) >0$.  However, this yields the following contradiction: 
$$
0= \theta_d''(\sigma(d))-c\theta_d'(\sigma(d))-\theta_d(\sigma(d))+g(\psi_1(\sigma(d)-ch)) - g(\psi_2(d+\sigma(d)-ch)) >0
$$
because  $g$ is strictly increasing. 

Finally, if $T<+\infty$ and $\psi_2(T) >\kappa$, then  $g(\psi_2(T-ch))< g(\psi_2(T))< \psi_2(T)$. Since, in addition, $\psi''_2(T)\leq \psi'_2(0)=0$, the following contradiction
$$0=\psi_2''(T)-c\psi_2'(T)-\psi_2(T)+g(\psi_2(T-ch))< 0$$
proves the above \underline{\sf claim}. 

Next, we consider, for $\epsilon \in [0,1]$, $\theta, \delta_0$ as in {\rm \bf(H)} and $\mu \in (\lambda_1(c), \lambda_2(c))$, $\mu < (1+\theta)\lambda_1(c)$, the function
$$
\psi(t,\epsilon) = \psi_2(t) + \epsilon (e^{\lambda_1 t} + e^{\mu t}).
$$
It is clear that $\psi(t,\epsilon) \leq Ce^{\lambda_1t}, \ t \leq 0,$ for some $C>1$ which does not depend on $\epsilon \in [0,1]$. 

With  $\chi_0(z) = z^2 -cz -1 +g'(0)e^{-zch}$, we have that $\chi_0(\mu) <0$ and  
$$
\mathcal{D}{\psi}:= \psi''(t,\epsilon)-c\psi'(t,\epsilon)-\psi(t,\epsilon)+g(\psi(t-ch, \epsilon)) = $$
$$ {\epsilon} \chi_0(\mu)e^{\mu t} + g(\psi(t-ch, \epsilon))- g(\psi(t-ch, 0)) -
g'(0)\epsilon (e^{\lambda_1 (t-ch)} + e^{\mu (t-ch)t}).
$$
Let $T_0 <0$ be such that $\psi(t-ch, \epsilon) \leq \delta_0:= \psi(T_0, 1)$ for all $t \leq T_0,$ \ $\epsilon \in [0,1]$. Then, for some $P(t,\epsilon) \in [\psi(t-ch, 0), \psi(t-ch, \epsilon)]$, it holds that 
$$
 |g(\psi(t-ch, \epsilon))- g(\psi(t-ch, 0)) -
g'(0)\epsilon (e^{\lambda_1 (t-ch)} + e^{\mu (t-ch)t})|= 
$$
$$
 |g'(P(t,\epsilon))- 
g'(0)|\epsilon(e^{\lambda_1 (t-ch)} + e^{\mu (t-ch)t})\leq $$
$$\epsilon(\psi(t-ch, \epsilon))^\theta|(e^{\lambda_1 (t-ch)} + e^{\mu (t-ch)t})\leq 
2C{\epsilon}e^{(1+\theta) \lambda_1 t}, \ t \leq T_0.
$$
Thus, for a sufficiently large negative $T_1 <T_0$,  
$$
\mathcal{D}{\psi} \leq {\epsilon} e^{\mu t}(\chi(\mu) + 2Ce^{[(1+\theta) \lambda_1-\mu] t}) <0
$$
for all $\epsilon \in (0,1],$\  $t \leq T_1$.  As a consequence, if we define $\psi_\epsilon(t)$ by  
\begin{equation*}
\psi_\epsilon(t):=\left\{\begin{array}{ll}\psi(t,\epsilon),&0\leq t\leq T_1, \\ y(t,\epsilon),& T_1 \leq t,\end{array}\right.
\end{equation*}
where $y=y(t,\epsilon),\ t \geq T_1,$ solves the initial value problem 
$y(s,\epsilon) = \psi(s,\epsilon),$ \ $s \in [T_1-ch,T_1],$ $y'(T_1,\epsilon) = \psi'(T_1,\epsilon)$ for the equation 
$$y''(z)-cy'(z)-y(z)+g(y(z-ch))= \mathcal{D}{\psi}(T_1,\epsilon)<0 ,$$
then $\psi_\epsilon \in C^2(\R)$ and $\mathcal{D}{\psi_\epsilon}(t) <0$. Define $T_K$ as the unique solution of the equation $\psi_2(T_K)=K$, then due to the smooth dependence of the initial function and  $\mathcal{D}{\psi}(T_1,\epsilon), \mathcal{D}{\psi}(T_1,0)=0,$ on the parameter $\epsilon$, 
$$
(y(t,\epsilon), y'(t,\epsilon)) \to (\psi_2(t), \psi'_2(t)), \quad \epsilon \to 0+, 
$$
uniformly for $t \in [T_0, T_K]$.   

Finally, due to the assumptions imposed on $w_0$, there exists $T_2<T_1$ such that
$$
w_0(t,s) \leq (1+\varsigma)e^{\lambda_1(c)z}<\psi_2(T_1), \ t \leq T_2, \ s \in [-h,0].
$$  
For $\epsilon \in (0,1]$,  set $p_\epsilon = \lambda_1^{-1}(c)\ln [(1+\varsigma)/\epsilon]$ and $\tilde \psi(t):= \psi_\epsilon(t+p_\epsilon)$.  Obviously,  $\tilde \psi(t) > (1+\varsigma)e^{\lambda_1(c)z},$ $t \leq T_1-p_\epsilon,$ $\tilde \psi(t)> \psi_2(T_1)$, $t \in [T_1-p_\epsilon, T_K-p_\epsilon]$ and $\tilde \psi(t) =(1+\varsigma +o(1))e^{\lambda_1(c)z},$ $t \to +\infty$.  Since 
 $\tilde \psi(T_K-p_\epsilon) = y(T_K,\epsilon) >K$, we obtain that 
 $$
 w_0(s,z)  \leq \tilde \psi(z), \quad s \in [-h,0].
 $$
whenever $T_K< T_2+p_\epsilon$. 
\qed
\end{proof}
Next, for the solution $w(t,z)$ of the initial value problem $w(s,z) = w_0(s,z),$ $(s,z) \in [-h,0]\times \R$, we define its  $\omega-$limit set by
$$
\Omega(w_0)=\{w_{*}\in C^{1,2}([-h,0]\times\R): \mbox{there exists  some\ }  t_{k}\to +\infty \ \mbox{such that \ } $$
$$\lim_{k\rightarrow\infty}w(t_k +s,z)=w_{*}(s,z)\
\mbox{uniformly on compact subsets of \ }   [-h,0]\times \R
 \}. 
$$
Note that the set $\Omega(w_0)$ is non-empty, compact and invariant with respect to the flow generated by  equation (\ref{E1}),  e.g. see  \cite[Lemma 2.8]{STR}.   
\begin{theorem}\label{Te4}
Assume that the initial function $w_{0}(s,z)\geq 0$ satisfies  the hypotheses $(IC1)$, $(IC2)$ and that, for some
$A >0$ and $c> c_*$, it holds 
$$
\lim_{z\to -\infty} w_0(s,z)
e^{-\lambda_1(c)z} =A
$$
uniformly on $s\in[-h,0]$. Choose a shifted copy of the wavefront profile $\phi$ normalised by the boundary condition 
$\lim_{z\rightarrow-\infty}e^{-\lambda_1(c)z}\phi(z)=1$. Then  
$$
\lim_{t\rightarrow\infty}|\phi(\cdot+a)-w(t,\cdot)|_{\lambda_1}=0,
$$
where  $w(t,z)$ solves   the initial value problem 
$w(s,z) = w_0(s,z), (s,z) \in \Pi_0,$ for (\ref{E1})  and $a=(\lambda_1(c))^{-1}\ln A$.
\end{theorem}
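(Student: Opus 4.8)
The plan is to run a squeezing argument: trap $w(t,z)$ between a monotonically \emph{decreasing} super-solution and a monotonically \emph{increasing} sub-solution, each converging to a translate of $\phi$ whose shift can be made arbitrarily small, and then to upgrade the resulting locally uniform convergence to convergence in $|\cdot|_{\lambda_1}$. First, since equation (\ref{E1}) is translation invariant in $z$, replacing $w_0(s,z)$ by $w_0(s,z-a)$ and $\phi$ by $\phi(\cdot+a)$ we may assume $A=1$ and $a=0$. Because $0\le w_0\le K$ and $\max\{\kappa,K\}$ (resp.\ $0$) is a stationary super-solution (resp.\ sub-solution), Lemma \ref{cl} gives $0\le w(t,z)\le\max\{\kappa,K\}$ everywhere; modifying $g$ above $\max\{\kappa,K\}$ so that it becomes bounded and monotone changes neither $\phi$ nor $w$, and makes Lemma \ref{Inv} applicable. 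As in Corollary \ref{zd}, one may also assume $w(2h+s,z)>0$ on $\Pi_0$, so that $(IC1)$, $(IC2)$ persist; to keep notation light we still write $w_0$.

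\emph{Upper barrier.} Fix a small $\varsigma>0$ and apply Lemma \ref{Lem2} to get $L\in\R$ and $\psi\in C^2(\R)$ with $\psi'>0$, $\psi(z)=(1+\varsigma+o(1))e^{\lambda_1(c)z}$ as $z\to-\infty$, $\psi(L)=K$, $w_0(s,z)<\psi(z)$ for $z\le L$, and $\psi''-c\psi'-\psi+g(\psi(\cdot-ch))<0$ on $(-\infty,L]$. Then $\bar\psi(z):=\psi(z)$ for $z\le L$ and $\bar\psi(z):=K$ for $z\ge L$ is a bounded super-solution of (\ref{E1}) (the corner inequality follows from $\psi'(L)>0$, and $z>L$ is handled by $g(K)\le K$), and $w_0\le\bar\psi$ on $\Pi_0$. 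Let $w^+_\varsigma$ solve (\ref{E1}) with $w^+_\varsigma(s,\cdot)=\bar\psi$; by Lemma \ref{cl} and Corollary \ref{cod1} one has $w(t,\cdot)\le w^+_\varsigma(t,\cdot)\le\bar\psi$, and $w^+_\varsigma(t,\cdot)$ is nonincreasing in $t$, hence (Dini) converges locally uniformly to a bounded solution $\phi^+_\varsigma$ of (\ref{EP}) with $\phi^+_\varsigma(-\infty)=0$ (since $\phi^+_\varsigma\le\psi$) and $\liminf_{+\infty}\phi^+_\varsigma>0$ (from the lower barrier below). By \cite[Proposition 2 and Theorem 1.2]{TPT}, together with the fact that for $c>c_*$ the unique wavefront decays exactly like $e^{\lambda_1(c)z}$, we get $\phi^+_\varsigma=\phi(\cdot+\delta_+(\varsigma))$ with $e^{\lambda_1(c)\delta_+(\varsigma)}\le1+\varsigma$, so $\delta_+(\varsigma)\to0^+$. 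Consequently $\limsup_{t\to\infty}w(t,z)\le\phi(z)$ locally uniformly, and $w(t,z)/\eta_1(z)\le\bar\psi(z)/\eta_1(z)\to1+\varsigma$ as $z\to-\infty$, uniformly in $t\ge0$.

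\emph{Lower barrier.} Using $(IC2)$ and $g'(0)>1$ one shows, by the same growing-sub-solution and Phragm\`en--Lindel\"of argument that gives $w_*=\kappa$ in the proof of Theorem \ref{Te3}A, that for every $\epsilon>0$ there exist $T_\epsilon,Z_\epsilon$ with $w(t,z)\ge\kappa-\epsilon$ for all $t\ge T_\epsilon$, $z\ge Z_\epsilon$. Meanwhile Lemma \ref{Inv} yields $\lim_{z\to-\infty}w(t,z)e^{-\lambda_1(c)z}=1$ for every $t\ge0$. Hence at a large time $t_0$ the function $w(t_0,\cdot)$ satisfies the hypotheses of Corollary \ref{In} (after a final harmless shift, with $\phi$ in place of $\phi(\cdot+a)$): the left tail is controlled by $w(t_0,z)e^{-\lambda_1(c)z}\to1$, the right tail by the spreading estimate, and the compact middle by the already established $C_{\mathrm{loc}}$ upper bound together with the spreading estimate, all with $q_\pm$ as small as we like (depending on $\varsigma,\epsilon$). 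Corollary \ref{In} (alternatively, a glued stationary sub-solution, Corollary \ref{cod1} and \cite{TPT}) then makes the forward evolution nondecreasing and convergent to $\phi(\cdot-\delta_-(\varsigma,\epsilon))$ with $\delta_-\to0$, so $\liminf_{t\to\infty}w(t,z)\ge\phi(z)$ locally uniformly and $w(t,z)/\eta_1(z)\ge1-o_\varsigma(1)$ as $z\to-\infty$ uniformly in $t\ge t_0$.

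\emph{Conclusion and main obstacle.} The two barriers give $w(t,\cdot)\to\phi$ locally uniformly; combining this with the uniform-in-$t$ two-sided $\eta_1$-tail estimates above and with $w(t,\cdot),\phi\to\kappa$ on the right, we obtain, for each $\epsilon>0$, a time $t_1$ with $|\phi(\cdot)-w(t_1,\cdot)|_{\lambda_1}<\varsigma(\epsilon)$, where $\varsigma(\epsilon)$ is the constant of Corollary \ref{coco}. That corollary then yields $|\phi(\cdot)-w(t,\cdot)|_{\lambda_1}<\epsilon$ for all $t\ge t_1$, and letting $\epsilon\to0$ completes the proof. The delicate step is the lower barrier: one must simultaneously force $w$ up to near $\kappa$ on right half-lines using only $(IC1)$--$(IC2)$ (so $w_0$ may vanish on compacts) despite the drift term $-cw_z$, and preserve the exact leading coefficient $1$ of $e^{\lambda_1(c)z}$ at $-\infty$ so that no asymptotic phase is introduced; this is precisely where Lemma \ref{Inv} and the sharp decay rate of the $c>c_*$ wavefront from \cite{TPT} are indispensable, while the passage from $C_{\mathrm{loc}}$ to $|\cdot|_{\lambda_1}$ convergence is then routine via Corollary \ref{coco}.
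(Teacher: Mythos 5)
Your upper barrier (Lemma \ref{Lem2}, the truncated super-solution $\bar\psi$, Corollary \ref{cod1} and the identification of the decreasing limit with a translate $\phi(\cdot+\delta_+)$, $\delta_+\le\lambda_1^{-1}\ln(1+\varsigma)$, via \cite{TPT}) reproduces the paper's argument. The lower barrier, however, has a genuine gap. You propose to obtain the spreading estimate $w(t,z)\ge\kappa-\epsilon$ for $z\ge Z_\epsilon$, $t\ge T_\epsilon$, and the locally uniform lower bound $\liminf_t w(t,\cdot)\ge\phi(\cdot)$, ``by the same growing-sub-solution argument as in Theorem \ref{Te3}A'' (or by ``a glued stationary sub-solution''). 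But the sub-solution used there, $\phi(z-\delta)-q\,\xi(z,\lambda_c)$ glued with the constant $\epsilon$, is a sub-solution only by virtue of Lemma \ref{Sttg}, whose computation requires $L_g=g'(0)$ (the Lipschitz increment of $g$ must be controlled by $g'(0)$ so that the characteristic polynomial $\chi_0$ absorbs it). Theorem \ref{Te4} does not assume $L_g=g'(0)$ — indeed the whole point is to cover the pushed regime where this fails — so none of the sub-solution machinery of Section \ref{sub3} is available to you, and your construction of a lower barrier never gets off the ground. The circularity is visible in your own text: to verify the hypotheses of Corollary \ref{In} on the ``compact middle'' $[Z(t_0),Z_\epsilon]$ you need a lower bound $w(t_0,\cdot)\ge\phi(\cdot)-q_-$ with $q_-$ small there, which you propose to extract from the very convergence from below that Corollary \ref{In} is supposed to deliver. (The uniform-in-$t$ lower tail estimate $w(t,z)/\eta_1(z)\ge 1-o_\varsigma(1)$ is likewise unsubstantiated: Lemma \ref{Inv} is pointwise in $t$.)

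The missing idea is the paper's minorant trick (going back to Uchiyama \cite{U}): choose a strictly increasing $C^1$ function $\hat g\le g$ on $[0,\kappa]$ with $\hat g(0)=0$, $\hat g'(0)=g'(0)\ge\hat g'(x)$, so that $L_{\hat g}=\hat g'(0)$ and Theorem \ref{Te3}A \emph{does} apply to the modified equation. The solution $\hat w$ of the modified problem with the same initial datum satisfies $\hat w\le w$ by comparison, and Theorem \ref{Te3}A gives $\hat w(t,\cdot)\to\hat\phi(\cdot)$ in $|\cdot|_{\lambda_1}$, where $\hat\phi$ is the normalised wavefront of the modified equation. This supplies, all at once, the uniform-in-$t$ lower tail control at $-\infty$, the positive lower bound on right half-lines, and — since $\hat\phi$ is a stationary sub-solution of the \emph{original} equation (\ref{E1}) — a monotone increasing evolution converging to a translate $\phi(\cdot+a_0)$ of the true front. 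A second pass through the $\omega$-limit set then squeezes every limit function between $\phi(\cdot+a_0)$ and $\phi(\cdot+a_1)$ with $1\le e^{\lambda_1 a_0}\le e^{\lambda_1 a_1}\le 1+\varsigma$, forcing $a_0=0$, after which Corollary \ref{coco} finishes exactly as you indicate. Your use of Lemma \ref{Inv} to pin down the asymptotic phase is a reasonable alternative to the paper's normalisation argument, but it cannot substitute for the missing sub-solution.
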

\begin{proof} Without loss of generality, we may assume that $A=1$ (otherwise we can take a shifted copy of $w_0$).  Fix an arbitrary  $\varsigma >0$ and let  
$L, K$ and $\psi$ satisfy all the conclusions of Lemma \ref{Lem2}. Then we have that  $w_0(s,z) \leq \psi_+(z)$, $(s,z) \in [-h,0]\times \R$, where 
\begin{equation*}\label{dg}
\psi_+(z):=\left\{\begin{array}{ll}\psi(z),&0\leq z\leq L, \\ K,& L \leq z.  \end{array}\right.
\end{equation*}
Since $K>g(K) \geq g(\psi_+(z-ch))$ and $\psi_+'(L-) > 0 = \psi_+'(L+)$, we conclude that  $\psi_+(z)$ is a super-solution for equation (\ref{E1}).  In view of Lemma \ref{cl}, we also find that 
$$
w(t,z)\leq\psi_+(z), \quad (t,z)\in\R_{+}\times \R.$$

 On the other hand, it is easy to see (e.g., cf. \cite[p. 478]{U}) that there exists a strictly increasing  $C^1$-function $\hat{g}: \R_+ \to \R_+$ satisfying the hypothesis $(\mathbf{H})$ and such that  $g'(0)=\hat{g}'(0)\geq \hat{g}'(x), $ \
 $g(x)\geq\hat{g}(x)$ for all $x\in[0,\kappa]$.  Let $\hat{w}(t,z),$ $t >0, z \in \R,$ solve the initial value problem 
\begin{equation}\label{hat}
w_{t}(t,z)=w_{zz}(t,z)-cw_{z}(t,z)-w(t,z)+\hat g(w(t-h,z-ch)),
\end{equation}
$$ w(s,z) = w_0(s,z), \ s\in [-h,0], \
z\in\R, 
$$
then clearly ${w}(t,z)$ is a super-solution for  (\ref{hat}) and therefore Lemma \ref{cl} implies that 
$\hat w(t,z)\leq {w}(t,z)$ for all $(t,z)\in\R_{+}\times\R$. Furthermore, 
 Theorem \ref{Te3}A assures  that
$\lim_{t \to +\infty}|\hat{w}(t,\cdot)-\hat{\phi}(\cdot)|_{\lambda_1}=
0$ for the wavefront $\hat \phi$ of equation (\ref{hat})  normalised  by $\lim_{z\rightarrow-\infty}e^{-\lambda_1(c)z}\hat \phi(z)=1$. 

Next, let ${w}_u(t,z),$ $t >0, z \in \R,$ denote the solution of  the initial value problem 
$ w_u(s,z) = \psi_+(z), \ s\in [-h,0], \
z\in\R,$ for equation (\ref{E1}).  Then  Corollary \ref{cod1} implies that 
\begin{equation}\label{u}
\hat w(t,z)\leq {w}(t,z)\leq w_u(t,z) \leq \psi_+(z), \ (t,z)\in\R_{+}\times\R. 
\end{equation}
Therefore it holds,  for some $a_1 \in [0, \lambda^{-1}(c)\ln (1+\varsigma)]$ and for all $w_l \in \Omega(w_0)$, that 
\begin{equation}\label{su}
\hat \phi(z) \leq w_l(s,z) \leq \phi(z+a_1), \ z \in \R,\ s \in [-h,0],
\end{equation}
where 
$$1=\lim_{z \to -\infty} \hat \phi(z) e^{-\lambda_1 z} \leq \lim_{z \to -\infty} \phi(z+a_1) e^{-\lambda_1 z} \leq \lim_{z \to -\infty} \psi_+(z) e^{-\lambda_1 z}= 1 +\varsigma.$$
 
Next, since $\hat \phi(z)$ is a sub-solution for equation (\ref{E1}), we find analogously that, for some $a_0\in [0,a_1]$ and for all $w_{ll} \in \Omega(w_l)\subset \Omega(w_0)$,  
$$\phi(z+a_0) \leq w_{ll}(s,z) \leq \phi(z+a_1), \ z \in \R,\ s \in [-h,0],$$
where 
$$1\leq \lim_{z \to -\infty} \phi(z+a_0) e^{-\lambda_1 z} \leq \lim_{z \to -\infty} \phi(z+a_1) e^{-\lambda_1 z} \leq  1 +\varsigma.$$
Since the latter relation holds for every $\varsigma >0$, we conclude that  actually $a_0=0$ and $\{\phi(\cdot)\} = \Omega(w_l)\subset \Omega(w_0)$.  Furthermore, as  a consequence of (\ref{su}), 
$\lim_{z \to -\infty}e^{-\lambda z} w_l(s,z) =1$ uniformly in $s \in [-h,0]$. 

Hence, for each $\varsigma >0$ there are  $Z_1(\varsigma), \ T_\varsigma >0$ such that, for all  $t \geq T_\varsigma,$ $z \leq Z_1(\varsigma)$, it holds 
\begin{equation}\label{dig}
\begin{array}{ll}& -2\varsigma \leq e^{-\lambda_1 z}(\hat{w}(t,z)-\hat{\phi}(z)) - e^{-\lambda_1 z}({\phi}(z)-\hat{\phi}(z)) \leq  \\ & e^{-\lambda_1 z}({w}(t,z)-{\phi}(z)) \leq e^{-\lambda_1 z}(\psi_+(z)-{\phi}(z)) < 2\varsigma.   \end{array}
\end{equation}
 In addition, $\{\phi(\cdot )\} \in \Omega(w_0)$ implies that there exits a sequence $t_n\to +\infty$ that $w(t_n+s,z) \to \phi(z)$ on compact subsets of $\Pi_0$. This fact, together with (\ref{u}) and (\ref{dig}), implies that 
 $$
\sup_{s \in [-h,0]} |\phi(\cdot)-w(t_n+s,\cdot)|_{\lambda_1}\leq 2\varsigma
 $$
 for all sufficiently large $n$. Finally,  an application  of Corollary \ref{coco} completes the proof. \qed
\end{proof}
{Below, we use Theorem \ref{MER} in order to analyse behavior of solutions whose initial data satisfy  the hypotheses $(IC1)$, $(IC2)$ and $(\ref{sps})$}.
\begin{proof}{\it of Corollary  \ref{Cor1}}:

\noindent\underline{Case I: $\lambda > \lambda_*$}.  The statement of the corollary is an immediate consequence of \cite[Theorem 1.4]{STR}. 

\noindent\underline{Case II: $\lambda < \lambda_*$}. 
Clearly, 
$\lambda=\lambda_{1}(c(\lambda))$.
Set $A_{-}=\min_{s\in[-h,0]} A(s)e^{-\mu s}$. Then for each 
 $A_1 <A_-$,  the initial datum 
$$
w_1(s,x):=\min\{A_1e^{\lambda(x+cs)},w_0
(s,x)\}
$$
meets all the conditions of Theorem \ref{MER}. Consequently,   for each $\delta>0$ there
exists $T_\delta >0$ such that solution $u_1(t,x)$ of the initial value problem $u_1(s,x)= w_1(s,x), \ (s,x) \in \Pi_0,$ to equation (\ref{e1}) satisfies 
\begin{eqnarray*}
\phi(x+ct+a_1)-\delta\eta_{\lambda}(x+ct)\leq u_1(t,x), \quad \mbox{for all } \
x\in\R,\  t>T_{\delta} 
\end{eqnarray*}
with $a_{1}=\lambda^{-1}\ln\, A_1$. Now, the functions 
$\phi$ and $\eta_\lambda$ are equivalent at $-\infty$ so that, to each given $\epsilon>0$ we can find  $A_1$ close to $A_-$ and $\delta>0$ close to $0$ such that 
$$
(1-\epsilon)\phi(x+ct+a_{-}) \leq
u_{1}(t,x) \leq u(t,x), \quad x\in\R,\ t>T_{\delta}.
$$
The upper estimation can be established  in a  similar way by comparing $u(t,x)$ with solution $u_2(t,x)$ of  (\ref{e1}) satisfying the initial condition 
$$w_2(s,x)=\max\{A_2e^{\lambda(x+cs)},w_{0}(s,x)\}, \quad (s,x) \in \Pi_0,$$ 
with $A_2 > A_+=\max_{s\in[-h,0]}A(s)e^{-\mu s}$.

\noindent\underline{Case III: $\lambda = \lambda_*$}.  
In order to establish inequalities (\ref{lc}), we proceed in the same manner as in Case II by  taking 
the initial functions 
$$
\tilde w_1(s,x):=\min\{A_1e^{M(x+cs)},w_0
(s,x)\}, \ \tilde w_2(s,x)=\max\{A_2e^{\nu(x+cs)},w_{0}(s,x)\},
$$
where $\nu < \lambda_* < M < \lambda_2(c_*)$, instead of $w_1(s,x)$ and $w_2(s,x)$. In addition, while proving the left side inequality of (\ref{lc}), 
we have also to  use  \cite[Theorem 1.4]{STR} instead of Theorem \ref{MER} (cf. Case I above). 

Now, inequalities  (\ref{lc})  also imply that the only wavefront to which  
$u(t,x)$ can converge (as $t \to +\infty$) is some translation $\phi_*(x+c_*t +b)$ of the critical wavefront $\phi_*(x+c_*t)$.  However, this is not possible in view of the following argument. 
Take some  $A_1 <A_-$ and some strictly increasing $\hat{g}\leq g$ satisfying
$(\textbf{H})$ with $L_{\hat{g}}=g'(0)$. Set 
$$
w_{*}(s,x)=\min\{A_1e^{\lambda_{*}(x+cs)},w_{0}(s,x) \}. 
$$
Then by the comparison principle,  solution $w_*(t,x)$ of the initial value problem
$$
w_{t}(t,x)=w_{zz}(t,x)-w(t,x)+\hat g(w(t-h,x)), \ w(s,x) =w_*(s,x), \ (s,x) \in \Pi_0, 
$$
satisfies $w_*(t,x) \leq u(t,x)$ for all $t \geq 0$, $x \in \R$. 
On the other hand, by 
invoking  Theorem \ref{Thm2}, we find that $w_*(t,x)$ converges uniformly to some wavefront
$\hat{\phi}_{*}(x+c_*t)$ of the modified equation.  Keeping $z= x+c_*t$ fixed and passing to the limit in $w_*(t,x) \leq u(t,x)$ (as $t \to +\infty$) for each fixed $z$, we find that $\hat{\phi}_{*}(z) \leq \phi_*(z +b)$ for all $z \in \R$.  However, this is not possible since 
$\phi_*(z+b)$ decays at $-\infty$ faster than $\hat{\phi}_{*}(z)$. 

Finally, in order to prove inequality (\ref{lc2}), it suffices to consider 
the initial function
$$
 \tilde w_3(s,x)=\max\{-xe^{\lambda_*(x+cs)},w_{0}(s,x)\},  \ (s,x) \in \Pi_0, 
$$
instead of  $w_2(s,x)$. Then we proceed can similarly to the proof of  inequalities  (\ref{lc})   by applying Theorem \ref{Thm2}A.
\qed
\end{proof}
\section{Proof of Theorem \ref{Thm3} and Corollary \ref{co3}} \label{sub6} Let the triple $(c,\lambda_c, \gamma)\in [c_\#, +\infty)\times  [\lambda_1(c),  \lambda_2(c))\times \R_+$ be as in Lemma \ref{Sttg} (i.e. 
 $\lambda_c= \lambda_1, \gamma =0$ if $c=c_\#$ and $\gamma >0, \lambda_c \in (\lambda_1(c), \lambda_2(c))$ if $c > c_\#$).   
Theorem \ref{Thm3} and Corollary \ref{co3}  follow from the next three assertions.  
\begin{lemma} \label{L1}Assume (\textbf{UM}) and let the initial function $w_0$ satisfy  (IC1).  Consider  $c \geq c_\#$ and let $\phi(z)$ denote a  positive semi-wavewfront to equation (\ref{EP}).  
Then the inequalities 
\begin{equation*}\label{26}
\phi(z) - qe^{-\gamma s}\xi(z-b,\lambda_c)\leq 
 w_0(s,z) \leq \phi(z) + qe^{-\gamma s}\xi(z-b,\lambda_c), \ (s,z) \in \Pi_0,  
\end{equation*}
(where $q>0, b \in \R$ are some fixed numbers) imply that  the solution $w(t,z)$ of the initial value problem 
$w(s,z) = w_0(s,z), (s,z) \in \Pi_0,$ for (\ref{E1}) satisfies 
\begin{equation}\label{tak}
\phi(z) - qe^{-\gamma t}\xi(z-b,\lambda_c)\leq 
 w(t,z) \leq \phi(z) + qe^{-\gamma t}\xi(z-b,\lambda_c), \  t \geq 0, \ z \in \R.   
\end{equation}
\end{lemma}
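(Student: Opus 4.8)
The plan is to prove both inequalities in (\ref{tak}) simultaneously by exhibiting
$w_\pm(t,z):=\phi(z)\pm q e^{-\gamma t}\xi(z-b,\lambda_c)$
as a pair of classical super- and sub-solutions of (\ref{E1}) and then running, strip by strip, the Phragm\`en-Lindel\"of comparison argument from the proof of Lemma~\ref{cl}. The one real difficulty is that under $(\textbf{UM})$ the birth function $g$ is not assumed monotone, so Lemma~\ref{cl} cannot be quoted as it stands; its ``right-sign'' monotonicity step has to be replaced by the two-sided Lipschitz bound $|g(u)-g(v)|\le L_g|u-v|=g'(0)|u-v|$, which is usable precisely because the inductive scheme will already control $|w-\phi|$ on the preceding time strip.

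First I would fix the triple $(c,\lambda_c,\gamma)$ exactly as in Lemma~\ref{Sttg}, so that $-\gamma+c\lambda_c-\lambda_c^2+1-g'(0)e^{\gamma h}e^{-\lambda_c ch}\ge0$. Writing $V(t,z):=q e^{-\gamma t}\xi(z-b,\lambda_c)=q e^{-\gamma t}e^{\lambda_c(z-b)}$, so that $V(t-h,z-ch)=e^{\gamma h}e^{-\lambda_c ch}V(t,z)$, a direct computation that uses $V_t=-\gamma V$, $V_{zz}=\lambda_c^2V$, $V_z=\lambda_cV$ and the profile equation (\ref{EP}) gives, for $t>0$ and $z\in\R$,
\[
\mathcal{N}w_\pm(t,z)=\pm V(t,z)\bigl[-\gamma+c\lambda_c-\lambda_c^2+1\bigr]+g(\phi(z-ch))-g\bigl(\phi(z-ch)\pm e^{\gamma h}e^{-\lambda_c ch}V(t,z)\bigr).
\]
Estimating the last difference by $L_g e^{\gamma h}e^{-\lambda_c ch}V(t,z)=g'(0)e^{\gamma h}e^{-\lambda_c ch}V(t,z)$ yields $\pm\mathcal{N}w_\pm(t,z)\ge V(t,z)\,[-\gamma+c\lambda_c-\lambda_c^2+1-g'(0)e^{\gamma h}e^{-\lambda_c ch}]\ge0$, i.e. $\mathcal{N}w_+\ge0\ge\mathcal{N}w_-$. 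As $\xi$ is smooth, $w_\pm$ are corner-free classical super- and sub-solutions on all of $\R$, a situation to which the proof of Lemma~\ref{cl} applies a fortiori (it only simplifies, there being no point $z_*$); note that here only the Lipschitz continuity of $g$ with constant $L_g=g'(0)$ has been used, never its monotonicity.

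Next I would prove $w_-\le w\le w_+$ by induction over the strips $[kh,(k+1)h]\times\R$, $k=0,1,2,\dots$. The hypothesis of the lemma gives $w_-(s,z)\le w_0(s,z)\le w_+(s,z)$ on $\Pi_0$, which starts the induction. On the $k$-th strip the shifted time $t-h$ ranges over the preceding strip, where the two-sided bound $|w-\phi|\le V$ is already known, so $|g(w(t-h,z-ch))-g(\phi(z-ch))|\le L_g|w(t-h,z-ch)-\phi(z-ch)|\le L_g V(t-h,z-ch)$; inserting this into the identity above for $\mathcal{N}w_\pm$ shows that $\delta(t,z):=\pm(w(t,z)-w_\pm(t,z))$ satisfies $\delta_{zz}-\delta_t-c\delta_z-\delta\ge V(t,z)\,[-\gamma+c\lambda_c-\lambda_c^2+1-g'(0)e^{\gamma h}e^{-\lambda_c ch}]\ge0$ on the $k$-th strip, together with $\delta\le0$ on its bottom face $t=kh$ (respectively on $\Pi_0$ when $k=0$). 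Since $\phi$ is bounded (a semi-wavefront of (\ref{EP}) with bounded $g$) and $w$ is bounded (by $|w_0|_\infty+\sup g$, via Duhamel's formula), we have $|w_\pm(t,z)|\le Ce^{D|z|}$, so the growth hypothesis needed in Lemma~\ref{cl} holds and its Phragm\`en-Lindel\"of step forces $\delta\le0$ on the strip, i.e. $|w-\phi|\le V$ there. Iterating over $k$ yields (\ref{tak}).

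The step I expect to be the main obstacle is this comparison, for two intertwined reasons. First, the Phragm\`en-Lindel\"of argument must rule out a positive interior maximum of $\delta$ escaping to $z=\pm\infty$: at $z\to+\infty$ this is handled by $V\to+\infty$ while $w-w_\pm$ stays bounded above, and at $z\to-\infty$ by $V\to0$ together with the bound carried over from the previous strip --- this is exactly the mechanism already built into Lemma~\ref{cl} through the condition $|w_\pm|\le Ce^{D|z|}$, so it can be reused. Second --- and this is what $(\textbf{UM})$ genuinely forces --- one must carry the \emph{two-sided} estimate $|w-\phi|\le V$ from one strip to the next, rather than the one-sided comparisons that would suffice for monotone $g$: it is only the two-sided bound that lets the Lipschitz inequality pin down the sign of the reaction term required to close the induction on the following strip.
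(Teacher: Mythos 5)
Your proposal is correct and follows essentially the same route as the paper: the paper likewise sets $\delta_\pm(t,z)=\pm\bigl(w(t,z)-(\phi(z)\pm qe^{-\gamma t}\xi(z-b,\lambda_c))\bigr)$, bounds $g(w(t-h,z-ch))-g(\phi(z-ch))$ by the two-sided Lipschitz estimate with $L_g=g'(0)$ using the bound already available on the preceding time strip, obtains $(\mathcal{L}\delta_\pm)(t,z)\ge q e^{\lambda_c(z-b)}e^{-\gamma t}\bigl[-\lambda_c^2+c\lambda_c+1-\gamma-g'(0)e^{\gamma h}e^{-\lambda_c ch}\bigr]\ge0$, and concludes by the Phragm\`en--Lindel\"of principle and a step-by-step iteration over the strips $[kh,(k+1)h]$. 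You correctly identified the one point where Lemma~\ref{cl} cannot be invoked verbatim (the non-monotonicity of $g$ under (\textbf{UM})) and the need to propagate the two-sided bound, which is exactly how the paper closes the induction.
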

\begin{proof}  Set $\delta_\pm(t,z)= \pm(w(t,z)-  (\phi(z) \pm qe^{-\gamma t}\xi(z-b,\lambda_c)))$ and
$$(\mathcal{L}\delta)(t,z):= \delta_{zz}(t,z)- \delta_{t}(t,z)-c\delta_{z}(t,z)-\delta (t,z).$$
Then $$(\mathcal{L}\delta_\pm)(t,z)=
 \mp(g(w(t-h,z-ch))- g(\phi(z-ch))) + q e^{\lambda_c(z-b)}e^{-\gamma t}[-\lambda^2+c\lambda +1-\gamma]. 
$$
Therefore  we obtain, for all $z \in \R$, $t \in (0,h]$, 
$$(\mathcal{L}\delta_\pm)(t,z)\geq 
 q e^{\lambda_c(z-b)}e^{-\gamma t}[-\lambda^2+c\lambda +1-\gamma - g'(0)e^{\gamma h}e^{-ch\lambda_c}]\geq 0. $$
Since, in addition,  $\delta_\pm(0,z) \leq 0$ and $\delta(t,z)$ is exponentially bounded, an application of  the Phragm\`en-Lindel\"of principle yields $\delta_\pm(t,z) \leq 0$ for all $t\in [0,h]$. Finally,  step by step procedure completes the proof of the inequality  $\delta_\pm(t,z) \leq 0$ for all $t \geq 0$. 
\qed
\end{proof}
\begin{lemma} \label{L7} Let all the conditions of Lemma \ref{L1} be satisfied and $c>c_\#$.  Assume, in addition,  that $|g'(u)| <1$ on some interval
$[\kappa-\rho, \kappa + \rho]$, $\rho>0$.  If, for some $b\geq 0$, 
the initial function $w_0$ and the semi-wavefront profile $\phi_c$ satisfy $$
\kappa-\rho/4 \leq w_0(s,z),\phi(z) \leq \kappa + \rho/4 \ \mbox{for all}\ z\geq b-ch, \ s \in [-h,0], 
$$
\begin{equation}\label{tiki}
|w_0(s,z)-\phi(z)| \leq 0.5 \rho e^{\lambda_c(z-b)},\  z\leq b, \ s \in [-h,0], 
\end{equation}
then $\phi$ is actually a wavefront (i.e. $\phi(+\infty)=\kappa$) and  the solution $w(t,z)$ of the initial value problem 
$w(s,z) = w_0(s,z), (s,z) \in \Pi_0,$ for (\ref{E1}) 
satisfies 
\begin{equation}\label{ECO}
|w(t,\cdot)- \phi(\cdot)|_{\lambda_c}  \leq 0.5\rho e^{-\gamma t }, \ t \geq 0, 
\end{equation}
for some $ \gamma >0$. 
\end{lemma}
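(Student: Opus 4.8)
The plan is to squeeze the solution between
\[
w_{\pm}(t,z):=\phi(z)\pm\tfrac{\rho}{2}e^{-\gamma t}\eta_{\lambda_c}(z-b),
\]
i.e. the profiles of Lemma~\ref{Sttg} and Lemma~\ref{L1} taken with amplitude $q=\rho/2$, and to deduce both assertions from $w_{-}\le w\le w_{+}$. Keep $\lambda_c$ and $\gamma$ as in Lemma~\ref{Sttg} and, decreasing $\gamma>0$ further if needed, also require $\gamma+\sigma e^{\gamma h}<1$ and $e^{\gamma h}\le 3/2$, where $\sigma:=\sup_{|u-\kappa|\le\rho}|g'(u)|<1$; this is possible since $c>c_\#$ forces $\chi_0(\lambda_c)<0$ and since $\sigma<1$. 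The first step is to verify that $w_{\pm}$ is a super/sub-solution for (\ref{E1}). On $\{z<b\}$ the weight equals $\xi(\cdot-b,\lambda_c)=e^{\lambda_c(\cdot-b)}$ and $\pm\mathcal{N}w_{\pm}\ge0$ is verbatim the computation in the proof of Lemma~\ref{L1}, using only $L_g=g'(0)$ and the characteristic inequality for $\gamma$. On $\{z>b\}$, where $\eta_{\lambda_c}(z-b)\equiv1$, the profile equation (\ref{EP}) gives
\[
\pm\mathcal{N}w_{\pm}(t,z)=\pm\bigl(g(\phi(z-ch))-g(w_{\pm}(t-h,z-ch))\bigr)+\tfrac{\rho}{2}e^{-\gamma t}(1-\gamma),
\]
and the choice of $\gamma$ together with $\kappa-\rho/4\le\phi(z)\le\kappa+\rho/4$ for $z\ge b-ch$ guarantees that both $\phi(z-ch)$ and $w_{\pm}(t-h,z-ch)$ lie in $[\kappa-\rho,\kappa+\rho]$, so the $g$-difference has modulus at most $\sigma e^{\gamma h}\tfrac{\rho}{2}e^{-\gamma t}$ and is absorbed by the $(1-\gamma)$-term. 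The kink at $z=b$ has the correct sign, $(w_{+})_{z}(t,b-)-(w_{+})_{z}(t,b+)=\tfrac{\rho}{2}\lambda_c e^{-\gamma t}>0$ (reversed for $w_{-}$), and the two hypotheses on $w_0$, with $e^{-\gamma s}\ge1$ on $[-h,0]$, give $w_{-}(s,\cdot)\le w_0(s,\cdot)\le w_{+}(s,\cdot)$ on $\Pi_0$.

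Since $g$ is not assumed monotone, the comparison Lemma~\ref{cl} is unavailable, so the squeezing must be carried out directly, by the step-by-step Phragm\`en--Lindel\"of argument of the proof of Lemma~\ref{cl}. Assume inductively $w_{-}\le w\le w_{+}$ on $[(k-1)h,kh]$ (for $k=0$ this is the ordering just established, with $w|_{[-h,0]}=w_0$). On $[kh,(k+1)h]\setminus\{z=b\}$ the functions $\delta_{\pm}(t,z):=\pm(w(t,z)-w_{\pm}(t,z))$ satisfy $\mathcal{L}\delta_{\pm}=\pm\mathcal{N}w_{\pm}\pm(g(w_{\pm}(t-h,z-ch))-g(w(t-h,z-ch)))$, and the $g(w_{\pm})$-terms cancel, leaving
\[
\mathcal{L}\delta_{\pm}(t,z)=\pm\bigl(g(\phi(z-ch))-g(w(t-h,z-ch))\bigr)+\tfrac{\rho}{2}e^{-\gamma t}\,\omega(z),
\]
with $\omega(z)=(1-\gamma-\lambda_c^{2}+c\lambda_c)e^{\lambda_c(z-b)}$ on $\{z<b\}$ and $\omega(z)=1-\gamma$ on $\{z>b\}$. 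Using the induction bound $|w(t-h,z-ch)-\phi(z-ch)|\le\tfrac{\rho}{2}e^{-\gamma(t-h)}\eta_{\lambda_c}(z-ch-b)$ and estimating the $g$-difference by $g'(0)$ times it where $z<b$ (the delayed argument then being $z-ch<b-ch$) and by $\sigma$ times it where $z>b$ (there $\phi(z-ch),w(t-h,z-ch)\in[\kappa-\rho,\kappa+\rho]$), the characteristic inequality in the first region and $\gamma+\sigma e^{\gamma h}\le1$ in the second yield $\mathcal{L}\delta_{\pm}\ge0$. The kink inequality at $z=b$ excludes an interior maximum there, so Phragm\`en--Lindel\"of (Theorem~10, Chapter~3 of \cite{PW}) forces $\delta_{\pm}\le0$ on $[kh,(k+1)h]$. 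Iterating gives $|w(t,z)-\phi(z)|\le\tfrac{\rho}{2}e^{-\gamma t}\eta_{\lambda_c}(z-b)$ for all $t\ge0$, $z\in\R$; since $b\ge0$ and $\eta_{\lambda_c}$ is nondecreasing, $\eta_{\lambda_c}(z-b)\le\eta_{\lambda_c}(z)$, which is precisely (\ref{ECO}).

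It remains to show $\phi(+\infty)=\kappa$. On $[b-ch,+\infty)$ the semi-wavefront $\phi$ takes values in $I:=[\kappa-\rho/4,\kappa+\rho/4]$, where $g$ is a contraction with constant $\sigma<1$ and fixed point $\kappa$. Writing $L:=\partial_z^{2}-c\partial_z-1$ and letting $\mu_{\pm}$ be the roots of $\mu^{2}-c\mu-1$ (so $\mu_{+}\mu_{-}=-1$), the bounded Green kernel $G$ of $L$ satisfies $\int_{\R}|G|=1/|\mu_{+}\mu_{-}|=1$, and (\ref{EP}) reads $\phi-\kappa=G*\bigl(-(g(\phi(\cdot-ch))-g(\kappa))\bigr)$. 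If $S:=\limsup_{z\to+\infty}|\phi(z)-\kappa|>0$, then (using that $\phi,\phi',\phi''$ are bounded) one extracts a $C^{2}_{\mathrm{loc}}$-limit $\psi$ of some sequence of translates $\phi(z_n+\cdot)$, $z_n\to+\infty$: $\psi$ is an entire solution of (\ref{EP}) with values in $I$ and with $\sup_z|\psi-\kappa|=S$ attained, yet the same identity yields $\|\psi-\kappa\|_{\infty}\le\sigma\|\psi-\kappa\|_{\infty}<S$, a contradiction. Hence $S=0$, i.e. $\phi(+\infty)=\kappa$.

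I expect the main obstacle to be the second paragraph: substituting the unavailable monotone comparison principle by the direct step-by-step Phragm\`en--Lindel\"of argument. The delicate points are (i) choosing $\gamma$ --- hence implicitly restricting $h$ through $e^{\gamma h}\le3/2$ --- so that near $\kappa$ the contraction factor $\sigma<1$ still dominates the coefficient $1$ of the $-w$ term of $\mathcal{N}$ after paying the delay amplification $e^{\gamma h}$, and (ii) checking that the delayed arguments $w_{\pm}(t-h,z-ch)$, and along the induction $w(t-h,z-ch)$, stay in $[\kappa-\rho,\kappa+\rho]$, which is what licenses invoking $|g'|<1$ there. The region $\{z<b\}$ away from $\kappa$ is bookkeeping identical to Lemma~\ref{L1}.
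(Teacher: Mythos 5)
Your proof is correct and, for the main squeezing part, follows essentially the same route as the paper: the paper also works with $\phi(z)\pm 0.5\rho\,e^{-\gamma t}\eta_{\lambda_c}(z-b)$, chooses $\gamma$ so that $m_g:=\sup_{[\kappa-\rho,\kappa+\rho]}|g'|\cdot e^{\gamma h}<1-\gamma$ (your $\gamma+\sigma e^{\gamma h}<1$), reuses the Lemma \ref{L1} computation for $z<b$, applies the Lagrange mean value theorem for $z>b$, and runs the step-by-step Phragm\`en--Lindel\"of argument on $\mathcal{L}\delta_\pm$ exactly as you do, since monotone comparison is indeed unavailable. The only genuine divergence is the proof that $\phi(+\infty)=\kappa$: the paper sets $m=\liminf_{z\to+\infty}\phi$, $M=\limsup_{z\to+\infty}\phi$, invokes \cite[Remark 12]{SEDY} to get $g([m,M])\supseteq[m,M]$, and concludes $m=M=\kappa$ from the contraction property of $g$ on $[\kappa-\rho,\kappa+\rho]$; you instead give a self-contained argument via the Green kernel of $\partial_z^2-c\partial_z-1$ (with $\|G\|_{L^1}=1$) and a $C^2_{\rm loc}$ limit of translates. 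Both work; yours avoids the external citation at the cost of the compactness extraction. Two minor remarks: the condition $e^{\gamma h}\le 3/2$ restricts $\gamma$ for a given $h$, not $h$ itself (so there is no hidden restriction on the delay); and on the first time step the delayed argument equals $w_0(t-h,z-ch)\in[\kappa-\rho/4,\kappa+\rho/4]$ directly from the hypothesis, so the factor $e^{\gamma h}$ is only really needed, as you use it, to keep $w(t-h,z-ch)$ and $w_\pm(t-h,z-ch)$ inside $[\kappa-\rho,\kappa+\rho]$ along the induction.
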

\begin{proof} Suppose that $\gamma>0$  is sufficiently small to satisfy the inequality 
 $
m_g:= \max\{|g'(u)|: u \in [\kappa-\rho, \kappa + \rho]\} e^{\gamma h} < 1-\gamma
$. 
 Clearly, for all $(s,z) \in \Pi_0$, it holds that
$$
\delta_-(s,z):= \phi(z) -  0.5\rho e^{-\gamma s}\eta_{\lambda_c}(z-b) - w_0(s,z) \leq 0, 
$$
$$
\delta_+(s,z):= w_0(s,z)-\phi(z) -  0.5\rho e^{-\gamma s}\eta_{\lambda_c}(z-b)\leq 0. 
$$
Then Lemma \ref{L1} implies that (\ref{tak}) holds with $q=0.5\rho$. From the proof of Lemma \ref{L1} we know that 
$(\mathcal{L}\delta_\pm)(t,z)\geq 0$ for all $t\in (0,h]$ and $z< b$.  Next, for each $t \in (0,h]$ and $z >b$, we find, by applying the  Lagrange mean value theorem, that 
 $$(\mathcal{L}\delta_\pm)(t,z)=
 \mp(g(w(t-h,z-ch))- g(\phi(z-ch))) + 0.5\rho e^{-\gamma t}[1-\gamma]=
$$
 $$
 \mp(g'(\zeta)(w(t-h,z-ch)- \phi(z-ch))) + 0.5\rho e^{-\gamma t}[1-\gamma]\geq 
0.5\rho e^{-\gamma t}(-m_g+1-\gamma) >0. 
$$
Here $\zeta=\zeta(t,z)$ denotes some point in  $[\kappa-\rho, \kappa + \rho]$. 

Note also that  $\delta_\pm(0,z) \leq 0$, $\delta_\pm(t,z)$ are uniformly bounded on $[0,h]\times \R$ and 
inequality (\ref{di}) is satisfied for $\delta_\pm(t,z)$ with $z_*=b$. Thus, arguing as in the proof of Lemma \ref{cl}, we  conclude that  $\delta_\pm(t,z) \leq 0$ for all $t \in [0,h]$, $z \in \R$. This estimation shows that actually inequality (\ref{ECO}) is fulfilled for all  $t \in [0,h]$. Now, we can apply step by step procedure in order to  obtain   $\delta_\pm(t,z) \leq 0$ for all $t \geq 0$, $z \in \R$. 

Finally, since $g: [\kappa-\rho, \kappa+\rho]\to [\kappa-\rho, \kappa+\rho]= :\mathcal{I}$ is well defined and 
$$\kappa - \rho \leq m:= \liminf_{z \to +\infty}\phi(z) \leq M:= \limsup_{z \to +\infty}\phi(z) \leq \kappa +\rho, $$
it follows from \cite[Remark 12]{SEDY} that $g([m,M]) \supseteq [m,M]$. On the other hand, $g$ is a contraction on $\mathcal{I}$ so that   $M=m=\kappa$. 
\qed
\end{proof}
\begin{lemma}\label{L3}
Let $g(x)$ and $w_0(t,z)$ meet all the assumptions of Corollary \ref{co3}.    Then inequality (\ref{nev}) implies that the solution $w(t,z)$  of the initial value problem 
$w(s,z) = w_0(s,z), (s,z) \in \Pi_0,$ for (\ref{E1})   satisfies (\ref{ECO}) 
for some positive $\rho, \gamma$. 
\end{lemma}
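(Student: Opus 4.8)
The plan is to steer the solution into the basin of attraction isolated by Lemma \ref{L7} and then quote that lemma. Since all hypotheses of Corollary \ref{co3} hold, (\textbf{UM}) supplies a unique normalised semi-wavefront $\phi_c(x+ct)$ for the given $c>c_\#$, and (\ref{nev}) together with Lemma \ref{L1} already gives $|w(t,z)-\phi_c(z)|\le q\,e^{-\gamma t}e^{\lambda(z-b)}$ for all $t\ge 0,\ z\in\R$, where $\lambda\in(\lambda_1(c),\lambda_2(c))$ is the weight exponent, $b\in\R$ is arbitrary, and $q:=q_0e^{\lambda b}$ (the factor $e^{-\gamma s}$ occurring in Lemma \ref{L1} is $\ge1$ on $[-h,0]$ and is absorbed into $q$). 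This already pins $w(t,z)$ down to $\phi_c(z)$ on each half-line $z\le b$ and on compact sets; what the unimodality and contraction hypotheses are there to supply is (i) that $\phi_c$ is actually a wavefront, and (ii) that $w(t,z)$ is eventually squeezed into a small neighbourhood of $\kappa$ for all large $z$.

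For (i): fix $\rho>0$ so small that $[\kappa-\rho,\kappa+\rho]\subset(g(g(x_m)),g(x_m))$; then $|g'|<1$ on $[\kappa-\rho,\kappa+\rho]$, this interval is forward invariant under the one-dimensional map $g$, and (since $g$ increases on $(0,x_m)$, decreases afterwards and $g(x)>x$ on $(0,\kappa)$) the interval $[g(g(x_m)),g(x_m)]$ is globally absorbing for $x_{n+1}=g(x_n)$. Evaluating (\ref{EP}) along sequences of approximate maxima and minima of $\phi_c$ at $+\infty$ yields $\limsup_{+\infty}\phi_c\le\sup_{[m,M]}g$ and $\liminf_{+\infty}\phi_c\ge\inf_{[m,M]}g$, where $m=\liminf_{+\infty}\phi_c>0$ and $M=\limsup_{+\infty}\phi_c$; combining this with $g\le g(x_m)$ and with $g([m,M])\supseteq[m,M]$ (\cite[Remark 12]{SEDY}) forces $[m,M]\subseteq[g(g(x_m)),g(x_m)]$, and then, $g$ being a contraction there, $m=M=\kappa$. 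So $\phi_c$ is a wavefront, and I fix $b$ large enough that also $|\phi_c(z)-\kappa|\le\rho/8$ for $z\ge b-ch$.

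For (ii) I must show $\kappa-\rho/4\le w(t,z)\le\kappa+\rho/4$ for $t\ge T_1-h$, $z\ge b-ch$, for some $T_1>0$. Upper bound: a spatially constant function solves (\ref{E1}) exactly iff it solves the homogeneous delay equation $\bar w'=-\bar w+g(\bar w(\cdot-h))$; taking $\bar w$ with constant datum $\max\{\kappa,|w_0|_\infty\}$, Lemma \ref{cl} gives $w(t,z)\le\bar w(t)$, and since $g$ is a contraction on the globally absorbing interval $[g(g(x_m)),g(x_m)]$ a Halanay-type estimate forces $\bar w(t)\to\kappa$; hence $w(t,z)\le\kappa+\rho/8$ for all $z$ and $t\ge T'$. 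Lower bound: on $[0,\kappa+\rho/8]$ choose a strictly increasing $\hat g\le g$ satisfying (\textbf{H}) with $L_{\hat g}=\hat g'(0)=g'(0)$ and unique positive fixed point $\hat\kappa\in(\kappa-\rho/8,\kappa)$ (an envelope-type construction, cf. \cite[p. 478]{U}); once $t\ge T'$, $w$ is a super-solution of the $\hat g$-version of (\ref{E1}), so the solution $\hat w$ of that equation issued from $w$ at time $T'$ satisfies $\hat w\le w$. Since $w(T'+\cdot,\cdot)$ still satisfies (IC1), (IC2) (by \cite[Proposition 1.2]{STR}) and $w(t,z)/\phi_c(z)\to1$ as $z\to-\infty$, and $\hat g$ has the same $g'(0)$ (hence the same $\lambda_1$ and the same critical speed $c_\#$), Theorem \ref{Te3}A applies to the monotone equation for $\hat g$ and gives $\hat w(t,\cdot)\to\hat\phi(\cdot)$ uniformly with $\hat\phi(+\infty)=\hat\kappa>\kappa-\rho/8$. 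Enlarging $b$ and $T_1$ once more, the two bounds give the desired trapping for $z\ge b-ch$, $t\ge T_1-h$.

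Finally, choose $T_1\ge T'$ so large that $q\,e^{\gamma h}e^{-\gamma T_1}\le 0.5\rho$. Then at time $T_1$ the pair $(w(T_1+\cdot,\cdot),\phi_c)$ satisfies all the hypotheses of Lemma \ref{L7}: the bounds near $\kappa$ for $z\ge b-ch$ come from (ii), while for $z\le b$ the estimate of the first paragraph gives $|w(T_1+s,z)-\phi_c(z)|\le q\,e^{\gamma h}e^{-\gamma T_1}e^{\lambda(z-b)}\le 0.5\rho\,e^{\lambda(z-b)}$. Lemma \ref{L7} then yields $|w(t,\cdot)-\phi_c(\cdot)|_{\lambda}\le 0.5\rho\,e^{-\gamma(t-T_1)}$ for $t\ge T_1$; absorbing $e^{\gamma T_1}$ into the constant (and using the already established bound on $[0,T_1]$) gives (\ref{ECO}) with a new $\rho$ and $\gamma$, and since $\phi_c(+\infty)=\kappa$ we obtain a genuine wavefront together with $|w(t,\cdot)-\phi_c(\cdot)|_\infty\le|w(t,\cdot)-\phi_c(\cdot)|_{\lambda}\to0$, i.e.\ the uniform convergence claimed in Corollary \ref{co3}. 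The main obstacle is step (ii): controlling $w$ as $z\to+\infty$ without monotonicity of $g$ — the upper bound hinges on global attractivity of $\kappa$ for the homogeneous delay equation (which itself rests on $|g'|<1$ on the globally absorbing interval), and the lower bound can be recovered from a monotone minorant only after $w$ has been pushed below $\kappa+\rho/8$, so the construction of $\hat g$ with fixed point arbitrarily close to $\kappa$ on the relevant range is essential.
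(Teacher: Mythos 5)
Your overall strategy (trap $w$ near $\kappa$ for large $z$ and $t$, then invoke Lemma \ref{L7}) is close in spirit to the paper's, but your step (ii) contains a genuine gap at the upper bound, and the rest of the argument leans on it. You claim $w(t,z)\le\bar w(t)$ where $\bar w$ solves the homogeneous delay equation $\bar w'=-\bar w+g(\bar w(\cdot-h))$ with the \emph{original, non-monotone} $g$, citing Lemma \ref{cl}. But Lemma \ref{cl} explicitly assumes $g$ monotone; its proof needs $\pm\bigl(g(w_\pm(t-h,\cdot))-g(w(t-h,\cdot))\bigr)\ge 0$, which fails when $g$ is decreasing on part of the range of $w$. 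In the setting of Corollary \ref{co3} the comparison principle for the original equation is simply not available, and the natural repair --- comparing with the homogeneous solution of a monotone majorant $g_+\ge g$ --- cannot give what you need: any increasing $g_+\ge g$ has its positive fixed point at least $g(x_m)>\kappa$, so this route only yields $\limsup w\lesssim g(x_m)$, never $w\le\kappa+\rho/8$. Since your lower bound (the monotone minorant $\hat g$ with fixed point near $\kappa$) is, as you yourself note, only constructible once $w$ has been pushed below $\kappa+\rho/8$, and since Lemma \ref{L7} quoted verbatim requires $w$ and $\phi$ within $\rho/4$ of $\kappa$ for $z\ge b-ch$, the whole second half of your proof collapses at this point.

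The paper resolves exactly this difficulty differently: it sandwiches $w$ between solutions of \emph{two} monotone equations built from envelopes $g_-\le g\le g_+$ whose fixed points $\kappa_\pm$ are close to $g(g(x_m))$ and $g(x_m)$ --- i.e.\ it accepts that $w$ can only be trapped in (roughly) the absorbing interval $[g(g(x_m)),g(x_m)]$, not in a small neighbourhood of $\kappa$ --- and then \emph{re-runs} the squeezing computation of Lemma \ref{L7} with the large amplitudes $\kappa-\kappa_-+2\epsilon$ and $\kappa_+-\kappa+2\epsilon$. That computation closes because the mean-value point $\zeta$ lies in $[\kappa_--\epsilon,\kappa_++\epsilon]$, where $|g'|<1$; this is precisely why Corollary \ref{co3} assumes $|g'|<1$ on the whole of $[g(g(x_m)),g(x_m)]$ rather than merely near $\kappa$. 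Your parts (i) (that $\phi_c$ is a genuine wavefront) and your final bookkeeping with Lemma \ref{L1} and the choice of $T_1$ are fine, but to complete the proof you must either replace your upper bound by the two-sided monotone-envelope trapping and redo the Lemma \ref{L7} estimate with the correspondingly large amplitude, or supply an independent (non-comparison) proof that $w$ eventually enters a $\rho/4$-neighbourhood of $\kappa$ --- the latter is not available by the tools in this paper.
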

\begin{proof} Henceforth, we fix small $\epsilon >0$, $\kappa_+ >g(x_m)$ close to $g(x_m)$ and $\kappa_-<g(\kappa_+)$ close to $g(g(x_m))$ such that 
$|g'(x)|<1$ for all 
$x\in [\kappa_-  - \epsilon, \kappa_+  + \epsilon]$.  The latter inequality and the unimodality of $g$ implies that $\kappa$ is a global attractor of the map $g: (0,g(x_m)] \to (0,g(x_m)]$. Therefore  each semi-wavefront $\phi_c$ to equation (\ref{EP}) actually is a wavefront (i.e. $\phi_c(+\infty)=\kappa$, e.g. see \cite[Theorem 18]{SEDY}).  It is easy to see that there exist strictly increasing functions $g_+, g_-: \R_+ \to \R_+$ possessing the following properties: 
\begin{itemize}
\item[(i)] 
$g_-(x)\leq g(x) \leq g_+(x),\ x \in [0,\kappa_+]$;
\item[(ii)] 
$g_-(x)= g(x) = g_+(x)$ for all $x$ from some neighbourhood of $0$;
\item[(iii)] $g_\pm$ satisfies {\rm \bf(H)}  with $\kappa_\pm$ and $L_{g_\pm} = g'(0)$.
\end{itemize}
Let $w_\pm(t,z)$ denote the solution of the initial value problem
$$
w_{t}(t,z)=w_{zz}(t,z)-cw_{z}(t,z)-w(t,z)+g_\pm(w(t-h,z-ch)),$$
$$ w_\pm(s,z)= w_0(s,z), \ (s,z) \in \Pi_0,$$
and let $\phi_\pm$ be wavefront solutions of the stationary equations
$$
0=y''(z)-cy'(z)-y(z)+g_\pm(y(z-ch)).
$$ 
normalised by the condition
$
\lim_{z\to -\infty}\phi_\pm(z)/\phi(z)=1
$
(this is possible in view of (ii)).  Then Theorem \ref{Thm2}A (applied to $w_\pm(t,z)$) and the comparison principle guarantee that there exist  large $b>0$ and $T>h$ such that, for all $t\geq T-h,$ $z \geq b-ch$, it holds  
\begin{equation*}\label{lin}
\kappa_-  - \epsilon < w_-(t,z) \leq w(t,z) \leq w_+(t,z) < \kappa_++\epsilon,\ 
\end{equation*}
\begin{equation*}\label{lina}
\kappa   - \epsilon <   \phi(z) < \kappa+\epsilon. 
\end{equation*}
In addition, by Lemma \ref{L1}, we also can assume that 
\begin{equation*}\label{ECOz}
(\mathcal{L}\delta_\pm)(t,z) \geq 0,\ \delta_\pm(t,z) \leq 0,   \ t \geq T-h,  \ z \leq b, 
\end{equation*}
where $\delta_\pm(t,z)$ are defined by
$$
\delta_-(t,z):= \phi(z) -  (\kappa- \kappa_- +2\epsilon)e^{-\gamma (t-T)}\eta_{\lambda}(z-b) - w(t,z), 
$$
$$
\delta_+(t,z):= w(t,z)-\phi(z) -  (\kappa_+ - \kappa +2\epsilon)e^{-\gamma(t-T)}\eta_{\lambda}(z-b).
$$
Thus  $\delta_\pm(t,z) \leq 0, \ (t,z) \in [T-h,T]\times\R$, so that, arguing as in the proof of Lemma \ref{L7}, we obtain 
$$
(\mathcal{L}\delta_\pm)(t,z) \geq  (|\kappa_\pm - \kappa| +2\epsilon)e^{-\gamma (t-T)}(-m_g+1-\gamma) >0,\   \ t \geq T,  \ z > b, 
$$
together with  $\delta_\pm(t,z) \leq 0$ for all  $t \geq T-h,  z \in \R$. This completes  the proof of Lemma \ref{L3}. \qed
\end{proof}

\begin{acknowledgements} This research was supported by FONDECYT (Chile).  We also thank  Viktor Tkachenko  (Institute of Mathematics in Kyiv, Ukraine) and Robert Hakl (Mathematical Institute in Brno, Czech Republic) for useful discussions:  especially we  would like to acknowledge the support of FONDECYT (Chile), project 1110309 and CONICYT (Chile), project MEC 80130046  which allowed the stay  of Dr.  Tkachenko and Dr. Hakl in the University of Talca. 
\end{acknowledgements}

\end{document}